\theoremstyle{plain}% default
\newtheorem{Teo}{Theorem}[section]
\newtheorem{Ex}[Teo]{Example}
\newtheorem{Lema}[Teo]{Lemma}
\newtheorem{Prop}[Teo]{Proposition}
\newtheorem{Cor}[Teo]{Corollary}
\newtheorem{maintheorem}{Theorem}
\newtheorem{maincorollary}[maintheorem]{Corollary}
\theoremstyle{remark}
\newtheorem{Remark}[Teo]{Remark}
\newcommand{\ds}{\displaystyle}
\newcommand{\inter}{\operatorname{int}}
\newcommand{\diam}{\operatorname{diam}}
\newcommand{\supp}{\operatorname{supp}}
\renewcommand{\top}{\operatorname{top}}
\begin{document}

\title[Equilibrium stability]{Equilibrium stability for non-uniformly hyperbolic systems}

\author[J. F. Alves]{J. F. Alves}
\address{Jos\'{e} F. Alves\\ Centro de Matem\'{a}tica da Universidade do Porto\\ Rua do Campo Alegre 687\\ 4169-007 Porto\\ Portugal}
\email{jfalves@fc.up.pt} \urladdr{http://www.fc.up.pt/cmup/jfalves}

\author[V. Ramos]{V. Ramos }
\address{Vanessa Ramos \\ Centro de Ci\^{e}ncias Exatas e Tecnologia-UFMA\\ Av. dos Portugueses, 1966, Bacanga\\  65080-805 S\~{a}o Lu\'{i}s\\Brasil}
\email{vramos@impa.br}

\author[J. Siqueira]{J. Siqueira}
\address{Jaqueline Siqueira\\ Departamento de Matem\'{a}tica PUC-Rio, Marqu\^{e}s de S\^{a}o Vicente 225, G\'{a}vea\\ 225453-900 Rio de Janeiro\\Brazil}
\email{jaqueline@mat.puc-rio.br}

\date{}
\thanks{JFA was partially supported by Funda\c c\~ao Calouste Gulbenkian and by   CMUP (UID/MAT/00144/2013), PTDC/MAT-CAL/3884/2014 and FAPESP/19805/2014 which are funded by FCT (Portugal) with national (MEC) and European structural funds through the programs COMPTE and FEDER, under the partnership agreement PT2020. JS was supported by CNPq-Brazil. VR was supported by CNPq-Brazil and by FAPEMA-Brazil.}
\keywords{Equilibrium States; Non-uniform Hyperbolicity; Stability.}
\subjclass[2010]{37A05, 37A35}

\pagenumbering{arabic}

\begin{abstract}We prove that for a wide family of non-uniformly hyperbolic maps and hyperbolic potentials we have equilibrium stability, i.e. the equilibrium states depend continuously on the dynamics and the potential. For this we deduce that the topological pressure is continuous as a function of the dynamics and the potential.
We  also prove the existence of finitely many ergodic equilibrium states for non-uniformly hyperbolic skew products and hyperbolic H\"older continuous potentials. Finally we show that these equilibrium states vary continuously in the weak$^\ast$ topology within such systems.
\end{abstract}

\maketitle

%\bibliographystyle{alpha}

%\bibliography{mybib7}

\tableofcontents

%%%%%%%%%%%%%%%%%%%%%%%%%%%%%%%%%
%%%%%%%%%%%%%%%%%%%%%%%%%%%%%%%%%%%%%%%%%%%%%%%%%%%%%%%%%%%%%%%%%%%%%%%%%%%%%%%%%%%%%%%%%%%%%%%%%%%%%%%
%%%%%%%%%%%%%%%%%%%%%%%%%%%%%%%%%%%%%%%%%%%%%%%%%% Introducao %%%%%%%%%%%%%%%%%%%%%%%%%%%%%%%%%%%%%%%

\section{Introduction}
\label{introducao}

The study of probability measures which remain invariant under the action of a dynamical system provides relevant information about the topological behavior of its orbits. For instance, Poincar\'e's Recurrence Theorem states that the orbit of almost every point with respect  to any invariant probability measure returns arbitrarily close to its initial state. When the dynamics has more than one invariant probability measure, an efficient way to choose an interesting one is to select those maximizing the free energy of the system, which are called equilibrium states.

More formally, given a continuous map $T: X \to X$ defined on a compact metric space $X$ and a  continuous potential $\phi: X \to \mathbb{R}$, we say that a $T$-invariant probability measure $\mu$ on the Borel sets of $X$ is an {\em equilibrium state} for $(T,\phi)$  if it satisfies the following variational principle:
          $$h_{\mu} (T) + \int \phi \, d\mu  = \sup_{\eta\in\mathbb P_T(X)} \left\{ h_{\eta} (T) + \int \phi \, d\eta 
\right\},  $$
 where $\mathbb P_T(M)$ denotes the set of $T$-invariant probability measures on the Borel sets of $X$,  that we always consider   endowed with the weak* topology.

The theory of equilibrium states was initiated by Sinai, Ruelle and Bowen in the seventies through the application of techniques and results from statistical mechanics to smooth dynamical systems. In the pioneering work \cite{Sinai} Sinai studied the problem of existence and finiteness of equilibrium state for Anosov diffeomorphisms and  H\"older continuous potentials. This strategy was carried out by Ruelle and Bowen in \cite{Ruelle68}, \cite{Ruelle78} and \cite{Bowen71} to extend the theory to uniformly hyperbolic (Axiom A) dynamical systems.

In the non-uniformly hyperbolic setting in dimension greater than one several advances were obtained by Sarig (see \cite{Sarig99}, \cite{Sarig03}) and Buzzi   \cite{BuzziSarig}, who studied countable Markov shifts, and Buzzi, Paccaut and Schmitt \cite{Buzzi},  who studied piecewise expanding maps.   Arbieto, Matheus and Oliveira \cite{AMO}, Oliveira and Viana \cite{OV08}, and Varandas and Viana \cite{VV} studied certain classes of non-uniformly expanding maps.

The problem of existence and finiteness of equilibrium states for partially hyperbolic systems has been fraught with greater challenges. In \cite{Buzziecia}, Buzzi, Fisher, Sambarino and V\'asquez obtained uniqueness of the maximal entropy measure for partially hyperbolic maps derived from Anosov. Climenhaga, Fisher and Thompson in \cite{CFT16, CFT17} address the question of existence and uniqueness of equilibrium states for Bonatti-Viana diffeomorphisms and Ma\~n\'e diffeomorphisms for suitable classes of potentials.
Castro and Nascimento in \cite{CN} showed uniqueness of the maximal entropy measure
for partially hyperbolic attractors semiconjugated to nonuniformly expanding maps. For a family of partially hyperbolic horseshoes introduced by D\'\i az, Horita, Rios and Sambarino in \cite{diazetal} the existence of equilibrium states for any continuous potential was proved by Leplaideur, Oliveira and Rios in \cite{LOR}. 
Later, Rios and Siqueira \cite{RS15} proved uniqueness of equilibrium states for a class of H\"older continuous potentials with small variation and which do not depend on the the stable direction. Recently, Ramos and Siqueira \cite{RS16} extended  this result to a broader class of H\"older continuous potentials.

Once we have established the existence and  finiteness of equilibrium states a natural question that arises is {\em how does the equilibrium states vary with the underlying dynamics and the potential.} Actually, one of the main goals in the field of Dynamical Systems is to understand how the behavior of the system changes under perturbations of  the underlying dynamics. The concept of \emph{structural stability} was introduced by Andronov and Potryagin \cite{APo} and states that the whole orbit structure remain unchanged under small perturbations. Nevertheless structural stability has been proved to be a concept too strong in the sense that many relevant models are not structural stable while     some of their dynamics properties remain unchanged after small perturbation.  Motivated by this, Alves and Viana introduced in \cite{AV}  the notion of \emph{statistical stability} which expresses the persistence of statistical properties in terms of the continuity of the physical measure. 
In the aforementioned work Alves and Viana considered a robust class of maps with non-uniform expansion and proved that the referent (unique) SRB measure varies continuously with the dynamics in the $L^1$-norm.

In the present work we introduce the concept of \emph{equilibrium stability} phrasing  the persistence of the equilibrium states under small perturbations of the underlying dynamics and the potentials. 
In  \cite{VV} Varandas and Viana proved statistical stability for a class of non-uniformly expanding local homeomorphisms on compact manifolds. Moreover, they show that if the topological pressure function depends continuously within such systems, then equilibrium stability holds for H\"older continuous potentials with not very large variation.  
The subsequent works \cite{BCV,CV} by Bomfim, Castro and Varandas study the continuity and even differentiability of several thermodynamical quantities for certain classes of non-uniformly expanding dynamical systems and potentials with small variation.
Here  we consider a family of  non-uniformly expanding maps and hyperbolic potentials and  prove that the topological pressure varies continuously  within this family. This enables us to derive that this family is equilibrium stable.

We also consider skew products over non-uniformly expanding maps with uniform contraction on the fiber.
For this kind of  systems Ramos and Viana in \cite{RV16} showed the existence of finitely many ergodic equilibrium states for hyperbolic potentials which do not depend on the stable direction. Here we give a sufficient condition on the fiber dynamic to enlarge the class of potentials for which there exists finiteness. 
We use the strategy of Ramos and Siqueira \cite{RS16} to prove that the independence of the stable direction required in \cite{RV16} is not necessary. We prove that given a hyperbolic H\"older continuous potential we can construct a homologous potential which is still hyperbolic and H\"older continuous but does not depend on the stable direction. This allows us to prove finiteness of equilibrium states.  Finally we obtain  equilibrium stability for a family of skew products and hyperbolic potentials.   We believe that these results can be an important step towards understanding  the more difficult class of partially hyperbolic diffeomorphisms with a non-uniformly expanding central direction and a uniformly contracting direction.

%
%Mais referencias aqui!
%
%
%
%Estabilidade para nao uniformemente expansores. 
%
%familia de potenciais dependendo dos mapas/
%continuidade da pressao./ 
%implica estabilidade  
%
%Estabilidade para skew products

This work is organized as follows. In Section~\ref{Results} we define the equilibrium stability and precisely state our main results. In Section~\ref{hyp pot} we recall the definition of relative pressure and some classical results in functional analysis. In Section~\ref{stability} we show the continuity of the topological pressure as a function of the base dynamics and the potential. From this result we derive the stability of equilibrium states associated to non-uniformly expanding maps and hyperbolic potentials. In Section~\ref{est eq} we prove the existence of finitely many ergodic equilibrium states and we extend the equilibrium stability obtained for the base dynamics to the skew product. We also prove the continuity of the topological pressure function. Finally, in Section~\ref{applications} we describe some classes of examples which satisfy our results.

%%%%%%%%%%%%%%%%%%%%%%%%%%%%%%%%%%%%%%%%%%%%%%%%%%%%%%%%%%%%%%%%%%%%%%%%%%%%%%%%%%%%%%%%%%%%%%%%%%%%%%%
%%%%%%%%%%%%%%%%%%%%%%%%%%%%%%%%%%%%%%%%%%%%%%%%%%  Results  %%%%%%%%%%%%%%%%%%%%%%%%%%%%%%%%%%%%%%%

\section{Statement of results}\label{Results} 
Let $M$ be a compact Riemannian manifold and   $\mathcal F$ a family of $C^1$ local diffeomorphisms $f:M\to M$.  Given  $\alpha>0$, consider $  C^\alpha(M)$ the space of H\"older continuous potentials $\phi:M\to\mathbb R$  endowed with the seminorm
 $$|\varphi|_\alpha=\sup_{x\neq y}\frac{|\varphi(x)-\varphi(y)|}{d(x,y)^\alpha}
 $$
 %\pause
and the norm
 $$\|\varphi\|_\alpha=\|\varphi\|_0+|\varphi|_\alpha,$$
 where $\|\quad\|_0$ stands for the $\sup$ norm in $C^0(M)$.
 %\pause
%
% and  $C^1(M)$   the space of $C^1$ functions $\phi:M\to \mathbb R$, both endowed with the $C^1$~topology. 
%Given $f\in \mathcal F$ and $\phi\in \mathcal G$, we say that an $f$-invariant Borel probability measure $\mu_{f,\phi}$ is an \emph{equilibrium state for~$(f,\phi)$} if
% $$h_{\mu_{f,\phi}}(f)+\int\phi d\mu_{f,\phi}=\sup_{\mu\in P_f(M)}\left\{h_{\mu}+\int\phi d\mu\right\},$$
% where $\mathbb P_f(M)$ denotes the set of $f$-invariant probability measures on the Borel sets of $M$ and $h_\mu(f)$ denotes the metric entropy.
We shall always consider  $  \mathcal F\times  C^\alpha(M)$ endowed  with the product topology.  Assume that $\mathcal H$ is a subset  of $  \mathcal F\times  C^\alpha(M)$    such  that  each $(f,\phi)\in \mathcal H$ has a unique equilibrium state~$\mu_{f,\phi}$. We say that  $ \mathcal H$ is  {\em equilibrium stable}  if the function assigning to each 
$(f,\phi) \in \mathcal H$ its unique equilibrium state $ \mu_{f,\phi}\in \mathbb P_f(M)$
is continuous.
%
%
%for any sequence $(f_n, \phi_n)_n$ in $\mathcal H$ converging to $(f, \phi)\in \mathcal H$   and any sequence $(\mu_n)_n$ in $\mathbb P_{f_n}(M)$ with each $\mu_n$   an equilibrium state for $(f_n,\phi_n)$,   every  weak* accumulation point of $(\mu_n)_n$ is an equilibrium state for $(f, \phi).$ 

\subsection{Non-uniform expansion}\label{se.NUE}
Given $c>0$, define $\Sigma_c(f)$ as the set of points $x\in M$ where $f$ is \emph{non-uniformly expanding}, i.e. 
%we say that $f$ is {\em non-uniformly expanding on a set} $H\subset M$ if 
\begin{eqnarray} \label{limsup}
\limsup_{n\rightarrow+\infty}\frac{1}{n}\sum_{i=0}^{n-1}\log\|Df(f^{j}(x))^{-1}\|\leq-c.
\end{eqnarray}
%We shall refer to condition~\eqref{limsup} as \emph{$c$-NUE}.
We say that   $\phi: M \to \mathbb{R}$ continuous is a {\em $c$-hyperbolic potential for~$f$} if  the  topological pressure of $\phi$ (with respect to $f$) is equal to the relative pressure of $\phi$ on the set~$\Sigma_c(f)$; for the definition of topological pressure relative to a set see Section~\ref{hyp pot}.  Proposition~\ref{pr.openess}  gives in particular that the set of $c$-hyperbolic potentials for $f$ is an open subset of $C^0(M)$. The existence of only finitely many ergodic equilibrium states for $(f, \phi) $ with $c$-hyperbolic $\phi$ was established in~\cite[Theorem~2]{RV16}. Additionally, under the  assumption that
\begin{equation}\tag{$\ast$}\label{propestrela}
\text{$\{f^{-n}(x)\}_{n\ge 0}$ is dense in $M$ for all $x\in M$,}
\end{equation}
the uniqueness of the equilibrium state is also established. Clearly, condition~\eqref{propestrela} holds whenever $f$ is \emph{strongly topologically mixing}, i.e. if for every open set $U\subset M$ there is some $N\in\mathbb N$ such that $f^N(U)=M$. 
Given $c>0$, define
%$$\mathcal G_c(f)=\left\{\phi\in  C^1(M): \text{ $\phi$ is $c$-hyperbolic for $f$\,} \right\}$$
%and
 \begin{equation*}\label{eq.Gc}
 \mathcal H_c=\left\{(f,\phi)\in \mathcal F\times C^\alpha(M): \text{ $\phi$ is $c$-hyperbolic  and $(\ast)$ holds for $f$} \right\}.
 \end{equation*}
 In our first result we obtain the continuous dependence   of the  equilibrium states over  the elements in the family  $\mathcal H_c$. %, which we formulate as follows. 
%Our first main result gives a sufficient condition for the equilibrium stability  of the family~$\mathcal H_c.$

\begin{maintheorem}\label{statistical para f}
 $\mathcal H_c$  is equilibrium stable.
\end{maintheorem}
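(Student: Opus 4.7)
The plan is to combine a compactness argument with uniqueness of the equilibrium state and an upper semicontinuity property of the metric entropy. Fix $(f,\phi)\in\mathcal H_c$ and consider any sequence $(f_n,\phi_n)\in\mathcal H_c$ converging to $(f,\phi)$ in $\mathcal F\times C^\alpha(M)$; write $\mu_n=\mu_{f_n,\phi_n}$ for the corresponding unique equilibrium states. Since $M$ is compact, the full space of Borel probabilities on $M$ is compact in the weak$^\ast$ topology, so $(\mu_n)$ has accumulation points. By uniqueness of $\mu_{f,\phi}$ (guaranteed by $(\ast)$), it suffices to prove that \emph{every} such accumulation point equals $\mu_{f,\phi}$; a standard subsequence argument then forces the whole sequence $\mu_n$ to converge to $\mu_{f,\phi}$.

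First I would check that any weak$^\ast$ accumulation point $\mu$ of $(\mu_n)$ is $f$-invariant. This is a routine calculation exploiting the uniform convergence $f_n\to f$ together with a subsequence $\mu_{n_k}\to\mu$: for each continuous $\psi:M\to\mathbb R$,
$$\int\psi\circ f\,d\mu-\int\psi\,d\mu=\lim_{k\to\infty}\left(\int\psi\circ f\,d\mu_{n_k}-\int\psi\circ f_{n_k}\,d\mu_{n_k}\right),$$
and the right-hand side vanishes by uniform continuity of $\psi$ and uniform convergence $f_{n_k}\to f$.

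Next I would show that $\mu$ is itself an equilibrium state for $(f,\phi)$. Each $\mu_{n_k}$ satisfies
$$h_{\mu_{n_k}}(f_{n_k})+\int\phi_{n_k}\,d\mu_{n_k}=P(f_{n_k},\phi_{n_k}).$$
The continuity of the topological pressure on $\mathcal H_c$, which is the core technical result of Section~\ref{stability}, gives $P(f_{n_k},\phi_{n_k})\to P(f,\phi)$. The uniform convergence $\phi_n\to\phi$ combined with $\mu_{n_k}\to\mu$ yields $\int\phi_{n_k}\,d\mu_{n_k}\to\int\phi\,d\mu$. Taking $\limsup_k$ in the displayed identity and invoking the upper semicontinuity of the entropy along the perturbation,
$$\limsup_{k\to\infty}h_{\mu_{n_k}}(f_{n_k})\le h_{\mu}(f),$$
we obtain $h_\mu(f)+\int\phi\,d\mu\ge P(f,\phi)$, and the variational principle converts this into an equality. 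Hence $\mu$ is an equilibrium state for $(f,\phi)$, and uniqueness forces $\mu=\mu_{f,\phi}$.

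The main obstacle will be to justify the joint upper semicontinuity of the entropy along a perturbation of the dynamics, which is delicate in the non-uniformly expanding setting since one has neither expansiveness nor a uniformly generating partition for all $f_n$ simultaneously. I expect to exploit the $c$-hyperbolicity of the potentials: because the relative pressure of $\phi_n$ on $\Sigma_c(f_n)$ equals $P(f_n,\phi_n)$, the equilibrium states $\mu_n$ concentrate their entropy production on the non-uniformly expanding set, where the hyperbolic times technology used to prove continuity of the pressure in Section~\ref{stability} should also furnish uniform control over partitions of bounded diameter, yielding the required semicontinuity estimate. The other essential input, continuity of the topological pressure itself, is taken from the preceding section.
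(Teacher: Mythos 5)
Your proposal follows essentially the same route as the paper's proof. The paper also proceeds by extracting a weak$^\ast$ accumulation point $\mu_0$ of $(\mu_n)$, checking $f$-invariance by the same routine estimate, feeding the continuity of the topological pressure into the variational-principle identity, and reducing the whole argument to the upper semicontinuity inequality $\limsup_n h_{\mu_n}(f_n)\le h_{\mu_0}(f)$. The way the paper disposes of the semicontinuity is precisely the mechanism you anticipate: one shows that $\mu_n(\Sigma_c(f_n))=1$ (this comes from \cite[Lemma~4.3]{RV16}), then one fixes a single finite partition $\mathcal P$ of diameter $<\delta_1/2$ with $\mu_0(\partial\mathcal P)=0$ and uses the contraction of dynamic balls at hyperbolic times (Lemma~\ref{distortion}) to conclude that the iterated partitions $\mathcal P_m^n$ have diameter shrinking to zero on $\Sigma_c(f_n)$ for every $n$. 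Hence the same $\mathcal P$ is generating for all $(f_n,\mu_n)$ simultaneously, which, combined with Kolmogorov--Sinai and the semicontinuity result of \cite[Theorem~11]{Ar}, gives the estimate you need. The only caveat is that continuity of the topological pressure on $\mathcal H_c$ (Corollary~\ref{continuidade da pressao na base}) is not a preliminary but is established earlier in the very same section via continuity of the transfer operator $\Gamma$ and convergence of the spectral radii, so if you were writing the proof from scratch you would still need to supply that spectral argument; your proposal treats it as an external input, which slightly understates the work remaining. Modulo that bookkeeping point, the structure and the key ideas coincide.
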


As a byproduct of our method to  prove Theorem~\ref{statistical para f} we obtain in Corollary~\ref{continuidade da pressao na base} the continuity of the topological pressure within the family $\mathcal H_c$.

\subsection{Uniformly expanding maps} Theorem~\ref{statistical para f}   has a simple but interesting consequence for uniformly expanding maps. 
Letting $\mathcal E\subset \mathcal F$ be the set of uniformly expanding maps, for each $f\in\mathcal E$ we have $\Sigma_c(f)=M$ for suitable (locally constant) choices of $c>0$ and norm in $M$. Moreover, it is well-known from the classical theory that  each $(f,\phi)\in \mathcal E\times C^\alpha(M)$ has a unique equilibrium state $\mu_{f,\phi}$ and \eqref{propestrela} holds for all $f\in\mathcal E$.  The next result is then an immediate consequence of Theorem~\ref{statistical para f}.
\begin{maincorollary}\label{co.b}
$\mathcal E\times C^\alpha(M)\ni (f,\phi)\longmapsto \mu_{f,\phi} \in \mathbb P_f(M)$ is continuous.
\end{maincorollary}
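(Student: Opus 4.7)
The plan is to reduce directly to Theorem~\ref{statistical para f} by exhibiting, around every point $(f_0,\phi_0)\in\mathcal E\times C^\alpha(M)$, a neighborhood of the form $\mathcal U\times C^\alpha(M)$ contained entirely in $\mathcal H_c$ for some $c>0$. Since continuity is a local property on $\mathcal E\times C^\alpha(M)$, this is enough.

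I would fix $(f_0,\phi_0)\in\mathcal E\times C^\alpha(M)$. Uniform expansion of $f_0$ provides $\lambda>1$ with $\|Df_0(x)^{-1}\|\le \lambda^{-1}$ for every $x\in M$, and I would set $c=\tfrac{1}{2}\log\lambda$. The function $f\mapsto \max_{x\in M}\|Df(x)^{-1}\|$ is continuous on $\mathcal F$ in the $C^1$-topology and $M$ is compact, so there is a neighborhood $\mathcal U$ of $f_0$ in $\mathcal F$ with $\|Df(x)^{-1}\|\le e^{-c}$ for all $f\in\mathcal U$ and all $x\in M$; in particular each such $f$ is itself uniformly expanding.

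For any $f\in\mathcal U$ the very definition in~\eqref{limsup} then yields $\Sigma_c(f)=M$, which forces the relative pressure of every continuous potential on $\Sigma_c(f)$ to coincide with the full topological pressure of $(f,\phi)$; thus every H\"older potential is automatically $c$-hyperbolic for $f$. To verify~\eqref{propestrela} I would use the classical fact that a uniformly expanding local diffeomorphism on a compact connected manifold is strongly topologically mixing --- small balls expand with uniform rate until their iterated images cover $M$ --- and, as noted in the excerpt, this implies density of every backward orbit. Uniqueness of $\mu_{f,\phi}$ within this class is the classical Ruelle--Bowen--Sinai theorem. Hence $\mathcal U\times C^\alpha(M)\subset\mathcal H_c$, and Theorem~\ref{statistical para f} immediately delivers continuity of $(f,\phi)\mapsto\mu_{f,\phi}$ on $\mathcal U\times C^\alpha(M)$ and, in particular, at $(f_0,\phi_0)$.

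The only real obstacle is that $\mathcal H_c$ is indexed by a fixed constant $c$, while different points of $\mathcal E$ have different expansion rates, so the argument must supply a single $c>0$ that remains valid throughout a $C^1$-neighborhood of $f_0$. This is precisely why the $C^1$-openness of uniform expansion, together with the compactness of $M$, is invoked in the construction of $\mathcal U$. Beyond this uniformity point the proof is essentially a verification, which is consistent with the excerpt presenting the corollary as immediate.
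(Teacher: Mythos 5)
Your proof is correct and follows the same approach as the paper: both reduce to Theorem~\ref{statistical para f} by observing that $\Sigma_c(f)=M$ for a locally constant choice of $c$ (and an adapted norm), that every H\"older potential is then automatically $c$-hyperbolic, and that uniqueness of the equilibrium state and condition~\eqref{propestrela} are classical facts for uniformly expanding maps. You have simply made explicit the ``locally constant'' choice of $c$ that the paper mentions only parenthetically, which is indeed the one point worth spelling out since $\mathcal H_c$ is indexed by a fixed $c$.
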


Though not explicitly stated anywhere in the literature,  to the best of our knowledge, we believe that Corollary~\ref{co.b} can probably be considered a folklore result by the experts in the field.

\subsection{Skew-product maps}  In the second part of this work we study skew products over non-uniformly expanding maps. 
Let $N$ be a compact metric space with a distance $d_N$ and let $g:M\times N\rightarrow N$ be a continuous map which is a uniform contraction on $N$, i.e. there exists  $0<\lambda<1$ such that for all $x\in M$  and all $y_1,y_2\in N$   we have
\begin{equation}\label{g}
d_N\big(g(x, y_1), g(x,y_2)\big)\leq\lambda d_{N}(y_1,y_2).
\end{equation}  
We assume that there exists some $y_0\in N$ such that $g(x,y_0)=y_0$ for every $x\in M.$ 
We define a family  $\mathcal S$ of skew-product maps $F:M \times N \to M \times N $ such that
    $$   F(x,y)= (f(x),g(x,y))  $$
for all $(x,y)\in M\times N$, where the {\em base dynamics} $f:M\rightarrow M$ belongs to $\mathcal{F}$ and the fiber dynamics $g:M\times N\rightarrow N$ satisfies~\eqref{g}. For notational simplicity, we shall denote the base dynamics  of $F$ by  $b_F$.

Given $c>0$ and $F\in\mathcal S$ we say that  a continuous function $\phi: M\times N \to \mathbb{R}$ is a {\em $c$-hyperbolic potential for $F$} if  the  topological pressure of the system $(F, \phi)$ is equal to the relative pressure on the set~$\Sigma_c(f)\times N$.
Our second main result states the finiteness of equilibrium states for skew-products with respect to hyperbolic potentials.
    
\begin{maintheorem}\label{unicidade}
Let $F\in\mathcal S$ and  let ${\phi}: M\times N \to \mathbb{R}$ be a H\"older continuous and $c$-hyperbolic potential for $F$, for some $c>0$. Then there exists a finite number of ergodic equilibrium states for $(F, {\phi})$.
\end{maintheorem}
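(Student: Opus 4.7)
The plan is to use a cohomological reduction to fall back on \cite[Theorem 2]{RV16}, which already gives finiteness of ergodic equilibrium states for $(F,\phi)$ under the additional hypothesis that $\phi$ does not depend on the stable ($N$) coordinate. Concretely, following the strategy of \cite{RS16}, I would produce a H\"older continuous $u:M\times N\to\mathbb R$ such that the cohomologous potential
$$\tilde\phi := \phi + u\circ F - u$$
depends only on the base variable $x$. Since cohomologous potentials have the same integral against every $F$-invariant probability, they share topological pressure, relative pressure on every $F$-invariant Borel set, and hence (by definition) $c$-hyperbolicity, as well as equilibrium states; so finiteness for $(F,\tilde\phi)$ will transfer verbatim to $(F,\phi)$.

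The natural candidate for $u$ comes from solving the cohomological equation $u\circ F-u=\phi(x,y_0)-\phi(x,y)$. Using that $g(\cdot,y_0)\equiv y_0$, so that $F^n(x,y_0)=(f^n(x),y_0)$, and that $g$ contracts fibers at rate $\lambda<1$, the formal telescoping solution
$$u(x,y)=\sum_{n=0}^{\infty}\Big[\phi(F^n(x,y))-\phi(f^n(x),y_0)\Big]$$
has $n$-th term bounded in absolute value by $|\phi|_\alpha\,\lambda^{n\alpha}\operatorname{diam}(N)^\alpha$, since the two points in question lie in the same fiber of $F^n$. Hence the series converges uniformly, and a direct telescoping computation gives $\tilde\phi(x,y)=\phi(x,y_0)$, which indeed depends only on $x$ and is H\"older on $M\times N$ because $\phi$ is.

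The technical heart of the argument is to verify that $u$ itself is H\"older on $M\times N$, so that $\tilde\phi$ stays in the class of H\"older potentials handled by \cite{RV16}. I would estimate $|u(x_1,y_1)-u(x_2,y_2)|$ by truncating the series at a level $K$ depending on the distance $r=d((x_1,y_1),(x_2,y_2))$. For $n\le K$, each term is controlled by $|\phi|_\alpha$ times the $\alpha$-power of the spread of $F^n$ on $M\times N$, which is at worst exponential in $n$ because $f$ is $C^1$ on the compact manifold $M$ and $g$ is continuous on the compact product. For $n>K$, the fiber contraction $\lambda^{n\alpha}$ gives an exponentially decaying tail. Choosing $K\sim |\log r|$ and balancing the two estimates yields H\"older regularity of $u$ with a (possibly smaller) exponent $\alpha'\le\alpha$. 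This balancing between the uniformly contracting fibers and the merely non-uniformly expanding base is, in my view, the main obstacle.

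With $u$ proved H\"older, $\tilde\phi$ is a H\"older continuous potential on $M\times N$ that depends only on the base variable; and it is $c$-hyperbolic for $F$ because cohomology preserves both the topological pressure and the relative pressure on $\Sigma_c(b_F)\times N$. Applying \cite[Theorem 2]{RV16} to $(F,\tilde\phi)$ therefore produces a finite list of ergodic equilibrium states, and since the equilibrium states of $(F,\tilde\phi)$ coincide with those of $(F,\phi)$, this finite list is precisely the set of ergodic equilibrium states for $(F,\phi)$, completing the proof.
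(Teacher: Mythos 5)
Your proposal takes essentially the same route as the paper: the cohomological reduction with
$u(x,y)=\sum_{n\ge0}\bigl[\phi(F^n(x,y))-\phi(F^n(x,y_0))\bigr]$,
convergent by uniform fiber contraction, and the observation that $g(\cdot,y_0)\equiv y_0$ makes the telescoping sum collapse to the explicit identity $\bar\phi(x,y)=\phi(x,y_0)$. You also correctly note that cohomology with a bounded continuous $u$ preserves topological pressure, relative pressure on $F$-invariant sets (hence $c$-hyperbolicity), and equilibrium states. However, the part you flag as \emph{the technical heart} --- pushing H\"older regularity of $u$ itself through the series by balancing fiber contraction against a crude exponential bound on the base spread --- is both unnecessary and internally inconsistent with what you wrote two sentences earlier: you have already derived $\bar\phi(x,y)=\phi(x,y_0)$, which is $\alpha$-H\"older purely because $\phi$ is, with no degradation of the exponent. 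The paper makes exactly this point and never attempts H\"older regularity of $u$; continuity of $u$ is all that the cohomology needs, and your detour would at best reproduce the result with a weaker exponent $\alpha'<\alpha$, and at worst stall on the non-uniform expansion of the base.

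The other difference is organizational. Once $\bar\phi$ is stable-independent and H\"older, you apply the [RV16] skew-product result directly to $(F,\bar\phi)$ (this is their Theorem~3, not Theorem~2, which concerns the base map alone). The paper instead descends one more level: it introduces the induced potential $\varphi(x)=\bar\phi(x,z)$ on $M$, proves a Ledrappier--Walters bijection between ergodic equilibrium states of $(F,\bar\phi)$ and $(f,\varphi)$ (Lemma~\ref{bijection}), checks $\varphi$ is $c$-hyperbolic for $f$ (Lemma~\ref{le.fizinho}), and then invokes [RV16, Theorem~2] for the base. Both paths are correct; the paper's longer descent yields the infrastructure it reuses for Theorem~\ref{statistical}, which is why the authors spell it out.
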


We point out that if the fiber $N$ can be decomposed as a finitely union $N=N_1\cup \cdots \cup N_n$ of pairwise disjoint compact sets $N_1, \cdots, N_n$ then the condition $g(x, y_0)=y_0$ for all $x\in M$ can be replaced by $g_i(x, y_i)=y_i$ for all $x\in M$ and some $y_i\in N_i$, $i=1, \cdots, n$.
In fact, since $M\times N$ is a product space and $N=N_1\cup \cdots \cup N_n$ we may define $n$ fiber dynamics $g_i:M\times N_i\to N_i$ by $g_i(x, y)=g(x, y)$ when $y\in N_i$, for each $i=1,\cdots, n$. See Example~\ref{ferradura}. 

The conditions  on the fiber dynamics allow us to enlarge the class of potentials considered in \cite[Theorem 3]{RV16}, where the independence of the stable direction on the potential was required.
%We also obtain equilibrium stability for the system considered in Theorem~\ref{unicidade}. 

Given $c>0$, consider 
 $$\mathcal G_c=\{(F,\phi)\in\mathcal S\times C^\alpha(M\times N):  \text{ $\phi$ is $c$-hyperbolic and \eqref{propestrela} holds for $b_F$} \}.$$
%
%
%
%
%
%Define the family of skew products on $M \times N$  by
%   $$\tilde{\mathcal{F}}= \{F(x,y)= (f(x),g(x,y)) ;\ f\in\mathcal{F} \ \mbox{and} \ g  \ \mbox{satisfies}  \ \eqref{g} \}$$ 
%and denote by $\tilde{\mathcal G}$ the open set of $C^1$ functions defined on $M \times N$ that are hyperbolic potentials for every element of $\tilde{\mathcal{F}}$.
%
%A pair $(F_0, \tilde{\phi}_0)\in \tilde{\mathcal{F}}\times\tilde{\mathcal G}$ is {\em equilibrium stable} if every weak* accumulation point of equilibrium states associated to $(F, \tilde{\phi})\in \tilde{\mathcal{F}}\times\tilde{\mathcal G}$ is an equilibrium state for $(F_0, \tilde{\phi}_0).$ We consider these accumulation points when $(F, \tilde{\phi})$ approaches $(F_0, \tilde{\phi}_0)$ in the $C^{1}$ topology.
It easily follows from Lemma~\ref{bijection} below that each $F\in\mathcal S$ with \eqref{propestrela}
holding for $b_F$ necessarily has a unique equilibrium state. Our next result establishes the continuity of such equilibria within this family. 
\begin{maintheorem}\label{statistical}
%Assume that there is $c>0$ such that for each $(F,\phi)\in \mathcal G_c$ we have that $\phi$ is $c$-hyperbolic for $F$.
$\mathcal G_c$  is equilibrium stable.
\end{maintheorem}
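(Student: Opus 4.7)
The plan is to reduce Theorem~\ref{statistical} to Theorem~\ref{statistical para f} via the cohomological trick of Ramos--Siqueira used for Theorem~\ref{unicidade}, together with the measure bijection of Lemma~\ref{bijection}. Given $(F,\phi)\in\mathcal{G}_c$, write $f=b_F$ and set $\tilde\phi(x):=\phi(x,y_0)$. The fiber contraction~\eqref{g} combined with the H\"older regularity of $\phi$ makes the series
\[
u_\phi(x,y)=\sum_{k=0}^{\infty}\bigl[\phi(F^k(x,y))-\phi(F^k(x,y_0))\bigr]
\]
absolutely convergent to a H\"older continuous function on $M\times N$, and a telescoping computation yields $\phi=\tilde\phi\circ\pi+u_\phi-u_\phi\circ F$, where $\pi:M\times N\to M$ is the projection. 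Hence, for every $F$-invariant probability $\nu$ we have $\int\phi\,d\nu=\int\tilde\phi\,d\pi_\ast\nu$. Combining this with the entropy-preserving bijection $\iota_F:\mathbb{P}_f(M)\to\mathbb{P}_F(M\times N)$ from Lemma~\ref{bijection} (satisfying $\pi_\ast\circ\iota_F=\mathrm{id}$), the variational principle gives $P_F(\phi)=P_f(\tilde\phi)$; the analogous identity applied to the restricted variational problem on $\Sigma_c(f)\times N=\pi^{-1}(\Sigma_c(f))$ transfers the $c$-hyperbolicity of $\phi$ for $F$ to $\tilde\phi$ for $f$. Therefore $(f,\tilde\phi)\in\mathcal{H}_c$ and $\mu_{F,\phi}=\iota_F(\mu_{f,\tilde\phi})$.

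For a convergent sequence $(F_n,\phi_n)\to(F,\phi)$ in $\mathcal{G}_c$, continuity of the assignment $F\mapsto b_F$ on $\mathcal{S}$ yields $f_n\to f$ in $\mathcal{F}$, and restriction to the section $\{y=y_0\}$ gives $\tilde\phi_n\to\tilde\phi$ in $C^\alpha(M)$. Thus $(f_n,\tilde\phi_n)\to(f,\tilde\phi)$ inside $\mathcal{H}_c$, and Theorem~\ref{statistical para f} produces $\mu_{f_n,\tilde\phi_n}\to\mu_{f,\tilde\phi}$ in the weak$^\ast$ topology on $\mathbb{P}(M)$.

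To lift this to weak$^\ast$ convergence $\mu_{F_n,\phi_n}\to\mu_{F,\phi}$ on $M\times N$, fix any $\psi\in C^\alpha(M\times N)$ and apply the same cohomological construction with $\psi$ in place of $\phi$, separately for each $F_n$ and for $F$. This yields the exact identity
\[
\int\psi\,d\mu_{F_n,\phi_n}=\int\psi(\,\cdot\,,y_0)\,d\mu_{f_n,\tilde\phi_n},
\]
together with its analogue for $\mu_{F,\phi}$. Since $\psi(\,\cdot\,,y_0)\in C^0(M)$, the right-hand side converges to $\int\psi(\,\cdot\,,y_0)\,d\mu_{f,\tilde\phi}=\int\psi\,d\mu_{F,\phi}$. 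As H\"older functions are uniformly dense in $C^0(M\times N)$ and all measures involved are probabilities, a standard three-$\varepsilon$ argument extends the convergence to arbitrary continuous test functions. The main technical obstacle is ensuring that the cohomological identity, the bijection $\iota_F$, and the transfer of $c$-hyperbolicity are all mutually compatible; the key input is the variational characterization of the relative pressure on $\Sigma_c(f)$ established in Section~\ref{hyp pot}, together with the uniformity in $F\in\mathcal{S}$ of the bound on $\|u_\psi\|_0$ in terms of $\|\psi\|_\alpha$ and the contraction rate $\lambda$, which keeps the coboundary construction well-behaved as the dynamics varies.
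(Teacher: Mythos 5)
Your proof is correct and follows the same strategy as the paper: reduce to Theorem~\ref{statistical para f} by combining the cohomological construction of Proposition~\ref{homologo} (observe that $\bar\phi(x,y)=\phi(x,y_0)$, so your $\tilde\phi$ is exactly the paper's $\varphi$), the transfer of $c$-hyperbolicity from Lemma~\ref{le.fizinho}, and the measure bijection from Lemma~\ref{bijection}. The only point where you diverge is in how the weak$^*$ convergence is lifted from $M$ back to $M\times N$. The paper takes an accumulation point $\tilde\mu$ of $(\tilde\mu_n)$, pushes it forward to an accumulation point of $(\mu_n)$ on $M$, invokes Theorem~\ref{statistical para f}, and then applies Lemma~\ref{bijection} in reverse; this relies on compactness of $\mathbb P(M\times N)$ and on identifying the accumulation point with the unique equilibrium state. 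You instead apply the coboundary identity to the test function $\psi$ itself, getting $\int\psi\,d\mu_{F_n,\phi_n}=\int\psi(\cdot,y_0)\,d\mu_{f_n,\tilde\phi_n}$ outright, which converts the lift into a one-line computation plus a density argument. Your version is a bit slicker because it sidesteps the question of whether the accumulation point is $F_0$-invariant, but it is a cosmetic rather than conceptual difference. One small remark: you do not actually need the uniformity of $\|u_\psi\|_0$ in $F$ that you flag at the end, since the identity $\int\psi\,d\nu=\int\psi(\cdot,y_0)\,d\pi_*\nu$ for $F_n$-invariant $\nu$ holds for each fixed $n$ separately, and that is all the argument uses.
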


%%%%%%%%%%%%%%%%%%%%%%%%%%%%%%%%%%%%%%%%%%%%%%%%%%%%%%%%%%%%%%%%%%%%%%%%%%%%%%%%%%%%%%%%%%%%%%%%%%                %%%%%%%%%%%%%%%%%%%%%%%%%%%%%%%%%%%%%%%%%%%%%%%%%%%%%%%%%%%%%%%%%%%%%%%%%%%%%%%%%%%%%%%%%%%%%%%%%%%%%%%%%%                                      Preliminares
%%%%%%%%%%%%%%%%%%%%%%%%%%%%%%%%%%%%%%%%%%%%%%%%%%%%%%%%%%%%%%%%%%%%%%%%%%%%%%%%%%%%%%%%%%%%%%%%%%
%%%%%%%%%%%%%%%%%%%%%%%%%%%%%%%%%%%%%%%%%%%%%%%%%%%%%%%%%%%%%%%%%%%%%%%%%%%%%%%%%%%%%%%%%%%%%%%%%%

\section{Preliminaries}\label{hyp pot}

In order to make this work  self contained we present here some basic definitions and results that  will be used in the next sections. We start by the definition of relative pressure through the notion of dynamic balls. This is the approach  from dimension theory and it is very useful to compute the topological pressure of non-compact sets. 

\subsection{Topological pressure} Let $X$ be a compact metric space. Consider $T:X\to X$  and  $\phi:X\to\mathbb{R}$ both continuous.
Given $\delta>0$, $n\in\mathbb{N}$ and $x\in X$, consider the \emph{dynamic ball}  %,  $B_{\delta}(x,n)$, by the set 
$$B_{\delta}(x,n)=\left\{y\in X:\;d(T^{j}(x), T^{j}(y))<\delta,\;\mbox{for}\;\; 0\leq j\leq n\right\}.$$
Consider for each  $N\in\mathbb N$% $\mathcal{F}_{N}$ the following collection of dynamical  balls 
 $$\mathcal{F}_{N}=\{B_{\delta}(x,n):\; x\in X\;\mbox{and}\; n\geq N\}.$$
Given $\Lambda\subset X$ denote by $\mathcal{F}_{N}(\Lambda)$ the finite or countable families of elements in $\mathcal{F}_{N}$ which cover $\Lambda.$
 Define for $n\in\mathbb N$
 $$S_n\phi(x)=\phi(x)+\phi(T(x))+\cdots+\phi(T^{n-1}(x)).
 $$
 and
 $$R_{n,\delta}\phi(x)=\sup_{y\in B_\delta(x,n)}S_n\phi(y).$$
Given a $T$-invariant set $\Lambda\subset X$, not necessarily compact, define for each~$\gamma>0$
$$
m_{T}(\phi, \Lambda, \delta, N,\gamma)=
\inf_{\mathcal{U}\subset\mathcal{F}_{N}(\Lambda)} \left\{\sum_{B_{\delta}(x,n)\in\mathcal{U}}e^{-\gamma n+ R_{n,\delta}\phi(x)}\right\}.
$$
%where $\mathcal{U}$ ranges over all  finite or countable families of elements in $\mathcal{F}_{N}$ which cover $\Lambda.$ 
%For every $\gamma\in\mathbb{R}$ 
%As $N$ goes to infinity consider the limit
Define
$$m_{T}(\phi, \Lambda, \delta, \gamma)=\lim_{N\rightarrow +\infty}{m_{T}(\phi, \Lambda, \delta, N,\gamma)},$$
and %by taking the infimum over $\gamma$ we obtain
$$P_{T}(\phi, \Lambda, \delta)=\inf{\{\gamma >0 \, | \; m_{T}(\phi, \Lambda, \delta,\gamma)=0\}}.$$
Finally, define the \textit{relative pressure} of $\phi$ %$P_{T}(\phi, \Lambda)$ of a subset 
on $\Lambda$ as %of $X$ is defined by
 $$P_{T}(\phi, \Lambda)=\lim_{\delta\rightarrow 0} {P_{T}(\phi, \Lambda, \delta)}.$$
The \emph{topological pressure of $\phi$} is by definition $P_T(\phi,X)$, and it satisfies
\begin{equation}\label{pressure}
P_{T}(\phi)=\sup\left\{ P_{T}(\phi, \Lambda),\, P_{f}(\phi, \Lambda^{c})\right\},
\end{equation}
where $\Lambda^c$ stands for the complement of the set $\Lambda$ on $M$. We refer the reader to~\cite{Pes} for the proof of~\eqref{pressure} and for additional  properties of the pressure. See also \cite[Theorem 4.1]{W} for a proof of the fact that
\begin{equation}\label{pressure2}
P_T(\phi)  = \sup_{\eta\in\mathbb P_T(X)} \left\{ h_{\eta} (T) + \int \phi \, d\eta 
\right\}.  
\end{equation}

Consider $C^0(X)$ the space of continuous functions from $X$ to $\mathbb R$, endowed with the supremum norm.
\begin{Prop}\label{pr.openess}  Let $\Lambda\subset X$ be a $T$-invariant set and  $\phi\in C^{0}(X)$. If $P_T(\phi, \Lambda^c)<P_T(\phi, \Lambda)=P_T(\phi)$, then there exists $\zeta>0$ such that for each $\psi\in C^{0}(X)$ with $\|\psi-\phi\|<\zeta$ we have $P_T(\psi, \Lambda^c)< P_T(\psi,\Lambda)=P_T(\psi).$
\end{Prop}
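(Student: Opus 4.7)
The plan is to exploit the Lipschitz dependence of relative pressure on the potential (with respect to $\|\cdot\|_0$) together with the ``max'' formula \eqref{pressure} for the topological pressure.

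First I would show that for any $T$-invariant set $\Lambda'\subset X$ (including $\Lambda$ and $\Lambda^c$) the map $\phi\mapsto P_T(\phi,\Lambda')$ is $1$-Lipschitz with respect to $\|\cdot\|_0$. The idea is direct from the definition: if $\|\psi-\phi\|_0<\zeta$ then $|S_n\psi(y)-S_n\phi(y)|\le n\zeta$ for every $y$ and every $n$, hence
$$R_{n,\delta}\psi(x)\le R_{n,\delta}\phi(x)+n\zeta.$$
Substituting this into the expression $e^{-\gamma n+R_{n,\delta}\psi(x)}$ and using the same cover $\mathcal U\subset\mathcal F_N(\Lambda')$, one gets
$$m_T(\psi,\Lambda',\delta,N,\gamma)\le m_T(\phi,\Lambda',\delta,N,\gamma-\zeta).$$
Letting $N\to\infty$ and choosing $\gamma>P_T(\phi,\Lambda',\delta)+\zeta$ so that $m_T(\phi,\Lambda',\delta,\gamma-\zeta)=0$, we conclude $P_T(\psi,\Lambda',\delta)\le P_T(\phi,\Lambda',\delta)+\zeta$, and then $\delta\to 0$ gives $P_T(\psi,\Lambda')\le P_T(\phi,\Lambda')+\zeta$. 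By symmetry, $|P_T(\psi,\Lambda')-P_T(\phi,\Lambda')|\le \|\psi-\phi\|_0$.

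Next, set $\varepsilon=P_T(\phi,\Lambda)-P_T(\phi,\Lambda^c)>0$ and choose any $\zeta<\varepsilon/2$. For $\psi\in C^0(X)$ with $\|\psi-\phi\|_0<\zeta$, the Lipschitz estimate above gives
$$P_T(\psi,\Lambda)\ge P_T(\phi,\Lambda)-\zeta\quad\text{and}\quad P_T(\psi,\Lambda^c)\le P_T(\phi,\Lambda^c)+\zeta,$$
so
$$P_T(\psi,\Lambda)-P_T(\psi,\Lambda^c)\ge \varepsilon-2\zeta>0.$$
Since $\Lambda$ is $T$-invariant, the identity \eqref{pressure} yields $P_T(\psi)=\max\{P_T(\psi,\Lambda),P_T(\psi,\Lambda^c)\}$, and the previous strict inequality forces $P_T(\psi)=P_T(\psi,\Lambda)>P_T(\psi,\Lambda^c)$. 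This is exactly the conclusion.

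I do not anticipate a serious obstacle; the only step that needs a little care is the Lipschitz bound for the relative pressure, because one must handle the double limit in $N$ and $\delta$ while keeping the comparison of covers valid. Everything else reduces to elementary manipulations once that estimate and the max formula \eqref{pressure} are in place.
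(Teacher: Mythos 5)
Your proof is correct and rests on the same core estimate as the paper's: from $\|\psi-\phi\|_0<\zeta$ one gets $R_{n,\delta}\psi\le R_{n,\delta}\phi+n\zeta$, hence $m_T(\psi,\Lambda',\delta,N,\gamma)\le m_T(\phi,\Lambda',\delta,N,\gamma-\zeta)$, hence $P_T(\psi,\Lambda')\le P_T(\phi,\Lambda')+\zeta$. Where you differ is in the surrounding organization. The paper applies this estimate only to $\Lambda^c$ and separately invokes a cited textbook result (continuity of $\phi\mapsto P_T(\phi)$ in $C^0$) to secure the lower bound $P_T(\psi)\ge P_T(\phi)-\varepsilon$. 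You instead establish the $1$-Lipschitz bound $|P_T(\psi,\Lambda')-P_T(\phi,\Lambda')|\le\|\psi-\phi\|_0$ for any invariant set $\Lambda'$, apply it to both $\Lambda$ and $\Lambda^c$, and close the argument with the max identity \eqref{pressure}. This is a genuinely self-contained variant: the external citation disappears, at essentially no extra cost since the Lipschitz estimate is the same one the paper already proves for $\Lambda^c$. Two minor remarks. First, you correctly obtain $P_T(\psi,\Lambda^c)\le P_T(\phi,\Lambda^c)+\zeta$; the paper's displayed chain writes $-\zeta$ in that spot, a sign slip that its argument nonetheless survives once one also shrinks $\zeta$ to be at most $\varepsilon_\phi-\varepsilon$. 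Second, the $T$-invariance of $\Lambda$ (hence of $\Lambda^c$) is precisely what justifies invoking \eqref{pressure}, so it is worth flagging that hypothesis explicitly at the moment you use the max formula.
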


\begin{proof} Let $\phi\in C^{0}(X)$ satisfy $P_T(\phi, \Lambda^c)<P_T(\phi, \Lambda)=P_T(\phi)$. Fix $\varepsilon_\phi>0$ such that $P_T(\phi, \Lambda^c)<P_T(\phi)-\varepsilon_\phi.$
Recalling that the topological pressure depends continuously on the potential $\phi\in C^0(X)$ (see e.g. \cite[Proposition 10.3.6]{VO}), given $0<\varepsilon<\varepsilon_\phi$ we can find $\zeta>0$ such that for any $\psi\in C^{0}(X)$ with $\|\psi-\phi\|<\zeta$ we have $P_T(\psi)\geq P_T(\phi)-\varepsilon.$
%Let $\mathcal{U}$ be a finite or
%countable family of elements in $\mathcal{F}_N$ which cover the set $\Lambda^c$. 
For every $\gamma\in\mathbb{R}$ we have
\begin{eqnarray*}
m_{T}(\psi, \Lambda^c, \delta, N,\gamma)&=&\inf_{\mathcal{U}\subset\mathcal{F}_{N}(\Lambda^c)} \left\{\sum_{B_{\delta}(x,n)\in\mathcal{U}}e^{-\gamma n+ R_{n,\delta}\psi(x)}\right\}\\
&\le&\inf_{\mathcal{U}\subset\mathcal{F}_{N}(\Lambda^c)} \left\{\sum_{B_{\delta}(x,n)\in\mathcal{U}}e^{-\gamma n+\zeta n+ R_{n,\delta}\phi(x)}\right\}\\
&=&m_{T}\left(\phi, \Lambda^c, \delta, N,\gamma -\zeta\right),
\end{eqnarray*}
and therefore 
$$P_T(\psi, \Lambda^c)\leq P_T(\phi, \Lambda^c)-\zeta<P_T(\phi)-\varepsilon_{\phi}- \zeta\leq P_T(\phi)-\varepsilon\leq P_T(\psi).$$
Hence, for any  $\psi\in C^{0}(X)$ with $\|\psi-\phi\|<\zeta$ we have $$P_T(\psi, \Lambda^c)< P_T(\psi,\Lambda)=P_T(\psi),$$
which gives the desired conclusion.   %Let us now see the general case.
%Given $\psi\in C^0(X)$ consider $\tilde\psi=\psi-\inf\psi$. It easily follows that
% $$P_T(\tilde \psi)=P_T(\psi)-\inf\psi$$
%Take $\phi\in C^{0}(X)$ satisfying $P_T(\phi, \Lambda^c)<P_T(\phi, \Lambda)=P_T(\phi)$. As $\tilde{\phi}$ is non-negative, using the part already seen we easily get
%\begin{eqnarray}\label{desigualdade do potencial}
%P_T(\tilde{\phi}, \Lambda^c)\leq P_T(\phi, \Lambda^c)-\inf\phi<P_T(\phi)-\inf\phi=P_T(\tilde{\phi}).
%\end{eqnarray}
%From the previous computations it follows that there exists a neighborhood of $\tilde{\phi}$ where for any $\tilde{\psi}$ holds the inequality~(\ref{desigualdade do potencial}). Therefore the same is true in a neighborhood of $\phi.$ 
\end{proof}

Now we introduce  the definition of hyperbolic potentials.
Let $M$ be a compact Riemannian manifold and let $f:M\to M$ be a local $C^1$ diffeomorphism.  Consider the set $\Sigma_c(f)$ as in Subsection~\ref{se.NUE}. We say that   $\phi:M\to\mathbb{R}$ continuous is a {$c$-\em hyperbolic potential} for $f$ if % the topological pressure of the system $(f, \phi)$ satisfies 
        $$P_f(\phi, \left(\Sigma_c(f)\right)^{c})<P_f(\phi, \Sigma_c(f))=P_f(\phi).$$
Proposition~\ref{pr.openess} gives  in particular that the hyperbolic potentials form an open class in the $C^0$-topology.

\subsection{Cones} We will also use a few tools from classic functional analysis that we present here. 
Let $E$ be a  Banach space over  $\mathbb K= \mathbb R$ or $\mathbb C$.  We say that a closed and convex set $\{ 0\} \neq \mathcal{C} \subset E$ is  a \emph{cone} in $E$ if 
\begin{enumerate}
\item $\forall \lambda \geq 0 : \lambda \mathcal{C} \subset \mathcal{C}; $
\item $ \mathcal{C} \cap  \left( - \mathcal{C}\right) = \{0\} .$
\end{enumerate}
A cone $\mathcal{C}$ defines a partial order in $E$  through  the relation  
       $$ x \leq  y \Longleftrightarrow  y - x \in \mathcal C .  $$ 
We say that    a cone $\mathcal{C} \subset E$ is   \emph{normal} if %it  satisfies 
$$ \exists \ \gamma \in  \mathbb{R} : 0 \leq x \leq y  \ \Longrightarrow \| x \| \leq  \gamma \| y \| .$$
Assume that $E$ is a Banach space partially ordered by a normal cone $\mathcal{C}$ with  non-empty interior and $T: E \rightarrow  E $ is a bounded linear  operator. We say that $T$ is \emph{positive} (with respect to $\mathcal{C}$) if $T(\mathcal{C}) \subset \mathcal{C}$. Note that if T is positive then $T(x)\leq T(y)$ whenever $x\leq y$. The \emph{spectral radius} of T is defined by
   $$r \left( T \right)=\displaystyle\lim_{n \to \infty} \displaystyle\sqrt[n]{\parallel T^{n} \parallel }   . $$
 The dual space of $E$ is defined by % the space of linear functionals on $E$ 
   $$E^{\ast}=\{x^{\ast}:E\to \mathbb K\; |\; \text{$x^{\ast}$ is linear and bounded}\}.$$
and the dual operator of $T$ is
$T^{\ast}:E^{\ast}\to E^{\ast} $ defined for all $x^{\ast}\in E^{\ast}$ and  all  $y\in E$ by
$$T^{\ast}(x^{\ast})(y)=x^{\ast}(T(y)).$$ 
Given $x^{\ast}\in E^{\ast}\setminus\{0\}$ and $c\in \mathbb R$, we define the real hyperplane  $$H=H(x^{\ast}, c) =\left\{ y\in E \; | \; \mathcal {R} x^{\ast}(y) = c\right\},$$ 
where  $\mathcal {R}a$ stands for the real part of $a\in\mathbb C$. We say that  $H$ \emph{separates} the sets
  $E_1,E_2\subset E$ if 
  $$
  \sup_{y\in E_1}\mathcal {R}x^*(y)  \leq c \leq   \inf_{z\in E_2}\mathcal {R}x^*(z).
  $$ 
Given a cone $\mathcal C\subset E$, we say that $x^{\ast}\in E^{\ast}$  is a positive functional  (with respect  to $\mathcal C$) if   $\mathcal {R}x^{\ast}(w) \geq 0$ for all $w \in C$.  The dual cone $\mathcal C^{\ast}$ is the cone given by all positives functionals.

A classical result due to Mazur can be stated as follows:   {\it let $E$ be a Banach space and  $C \subset E$ a convex set with  nonempty interior.  If  $C_{1} \neq \emptyset$ is a convex set such that  $C_{1} \cap \inter(C)= \emptyset$, then there exists a closed real hyperplane which separates  $C$ and $C_{1}$.}
For a proof see e.g. \cite[Proposition 7.12]{Deimling}.

We end this section with a useful corollary from Mazur\rq{s} Theorem. This is somewhat a floklore result but  for the sake of completeness we present here its proof. 

\begin{Lema}\label{Mazur}

Let $E$ be a Banach space partially ordered by a normal cone $\mathcal{C}$ with  non-empty interior. Let $T$ be a positive bounded operator on $E$.
Then the spectral radius of $T$  is an eigenvalue of the adjoint  $T^{\ast}$,  associated to an eigenfunctional  $x^{\ast} \in \mathcal{C}^{\ast}$. 
\end{Lema}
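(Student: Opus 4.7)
The plan is to realize $r := r(T)$ as an eigenvalue of the adjoint $T^{*}$ with an eigenfunctional in $\mathcal{C}^{*}$ by a weak$^{*}$ limiting argument applied to the dual resolvent, leveraging the separation and positivity tools just developed.

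First I would verify that $r \in \sigma(T) = \sigma(T^{*})$. For a positive operator this is classical: the Neumann series $R(\lambda,T) = \sum_{n \ge 0} T^{n}/\lambda^{n+1}$ represents the resolvent and is itself positive for real $\lambda > r$. Fix $e \in \inter(\mathcal{C})$ and choose $\delta > 0$ with $B(e,\delta) \subset \mathcal{C}$; then $\pm \delta u \le e$ for every $u$ with $\|u\|=1$, and positivity gives $\pm\delta T^{n}u \le T^{n}e$ for all $n$. Using normality on the pairs $0 \le T^{n}e \pm \delta T^{n}u \le 2T^{n}e$ yields $\|T^{n}u\| \le C\|T^{n}e\|$, hence $\|T^{n}\| \le C\|T^{n}e\|$ and in particular $\|T^{n}e\|^{1/n} \to r$. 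From this one concludes that $\|R(\lambda,T)e\|$ must blow up as $\lambda \downarrow r$, preventing $r$ from lying in the resolvent set.

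Next I would fix this $e$ and choose a functional $x^{*}_{0} \in \mathcal{C}^{*}$ with $x^{*}_{0}(e)=1$ such that
$$R(\lambda,T^{*})x^{*}_{0}(e) \;=\; x^{*}_{0}\!\left(R(\lambda,T)e\right) \;\to\; \infty \quad \text{as } \lambda \downarrow r;$$
the divergence of $\|R(\lambda,T)e\|$ inside $\mathcal{C}$ together with the normality of $\mathcal{C}$ allows such a $x^{*}_{0}$ to be selected independently of $\lambda$. For each $\lambda > r$ set
$$x^{*}_{\lambda} \;=\; \frac{R(\lambda,T^{*})x^{*}_{0}}{R(\lambda,T^{*})x^{*}_{0}(e)} \;\in\; \mathcal{C}^{*},$$
so that $x^{*}_{\lambda}(e)=1$. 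Since $\pm\delta u \le e$, every $y^{*}\in \mathcal{C}^{*}$ satisfies $\|y^{*}\| \le y^{*}(e)/\delta$, so $\|x^{*}_{\lambda}\| \le 1/\delta$ uniformly in $\lambda$. The resolvent identity $T^{*}R(\lambda,T^{*}) = \lambda R(\lambda,T^{*}) - I$ then yields
$$T^{*}x^{*}_{\lambda} \;=\; \lambda\, x^{*}_{\lambda} \;-\; \frac{x^{*}_{0}}{R(\lambda,T^{*})x^{*}_{0}(e)}.$$

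Finally, by Banach--Alaoglu the bounded family $\{x^{*}_{\lambda}\}_{\lambda > r}$ admits a weak$^{*}$ accumulation point $x^{*}$ along some sequence $\lambda_{n}\downarrow r$. The normalization $x^{*}_{\lambda_{n}}(e)=1$ passes to the weak$^{*}$ limit, so $x^{*}(e)=1$ and in particular $x^{*}\ne 0$; and $\mathcal{C}^{*}$ is weak$^{*}$-closed (a direct consequence of the closedness of $\mathcal{C}$ combined with Mazur-type separation), so $x^{*}\in \mathcal{C}^{*}$. Because $R(\lambda_{n},T^{*})x^{*}_{0}(e)\to\infty$, the error term in the displayed identity tends to zero in norm, hence weak$^{*}$, and passing to the limit gives $T^{*}x^{*} = r\,x^{*}$. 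The main obstacle is the second step: producing a single $x^{*}_{0}\in \mathcal{C}^{*}$ for which $R(\lambda,T^{*})x^{*}_{0}(e)\to\infty$, which requires converting the \emph{norm} divergence of $R(\lambda,T)e$ into divergence against a positive functional using only normality and the interior property.
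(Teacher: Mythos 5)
Your plan is a genuinely different route from the paper's. The paper never touches resolvents or spectral theory: it introduces the range $E_0 = \{r(T)x - Tx : x \in E\}$, shows by a direct order-theoretic argument (iterating $T(-x) \ge (r(T)+\delta)(-x)$ and invoking normality) that $E_0 \cap \inter(\mathcal C) = \emptyset$, and then applies Mazur's separation theorem once to produce a positive functional that vanishes on $E_0$, which is exactly the eigenequation. Your approach instead manufactures the eigenfunctional as a weak$^*$ limit of normalized dual resolvents. Both are legitimate strategies (yours is closer to the classical Karlin--Krein--Rutman line of argument), but the paper's is more self-contained and shorter, whereas yours imports more of the machinery of positive-operator spectral theory.

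That said, you have correctly identified the gap, and it is a real one. Your sentence ``the divergence of $\|R(\lambda,T)e\|$ inside $\mathcal C$ together with the normality of $\mathcal C$ allows such a $x^*_0$ to be selected independently of $\lambda$'' does not follow as written: a priori the vectors $v_\lambda := R(\lambda,T)e$ could diverge in norm while ``rotating'' so that no single positive functional sees the divergence. To close the gap you need two further ingredients that you do not mention. First, monotonicity of the resolvent: since $R(\lambda,T) = \sum_{n\ge 0} T^n/\lambda^{n+1}$ is term-by-term decreasing in $\lambda$, the net $(v_\lambda)_{\lambda>r}$ is non-decreasing in the cone order as $\lambda \downarrow r$, so for any fixed $x_0^* \in \mathcal C^*$ the scalars $x_0^*(v_\lambda)$ form a non-decreasing net and hence converge or diverge monotonically. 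Second, and this is the substantive input, you need the theorem (due to Krein; see also Ando or Bonsall) that normality of $\mathcal C$ is equivalent to $\mathcal C^* - \mathcal C^*$ being all of $E^*$ (with a uniform decomposition constant). Granting that, if $x_0^*(v_\lambda)$ stayed bounded for every $x_0^* \in \mathcal C^*$, then $x^*(v_\lambda)$ would stay bounded for every $x^* \in E^*$, and the uniform boundedness principle would force $\sup_\lambda \|v_\lambda\| < \infty$, contradicting what you established in step one. So the argument does go through, but only after invoking a nontrivial duality theorem about normal cones in addition to Mazur-type separation. By contrast, the paper's proof uses normality only in its raw form, never needs the generating-dual-cone theorem, and never needs uniform boundedness or Alaoglu; it replaces the entire limiting apparatus with the single observation that $E_0$ is a subspace missing $\inter(\mathcal C)$, which is a cleaner and more economical use of the hypotheses.

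Two minor remarks. Your claim $\|x^*_\lambda\| \le 1/\delta$ for $x^*_\lambda \in \mathcal C^*$ with $x^*_\lambda(e)=1$ is correct and is exactly the right normalization; this is where you use $e \in \inter(\mathcal C)$, and the paper uses the same interior hypothesis to produce the element $a$ with $Ka - x \ge 0$ inside its contradiction argument. Also note that the error term $x_0^*/(R(\lambda,T^*)x_0^*(e))$ vanishes in norm (not merely weak$^*$), so passing to the limit in the eigenequation is indeed routine once the denominator is known to diverge.
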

\begin{proof}
Defining  $E_{0}= \{ r(T)x - Tx : x \in E \}$, we start by showing that $E_{0}\cap \mbox{int}(\mathcal{C}) = \emptyset$.
Assume by contradiction    that $E_{0}\cap \mbox{int}(\mathcal{C}) \neq \emptyset$. This means that there exist 
 $x \in E$ and $y \in \mbox{int}(\mathcal{C}) $ such that $y= r(T)x - Tx$. Thus there exists  $\delta >0$  such that  $y - \delta (-x)\in\inter (\mathcal C)\subset \mathcal C$, and so $y \ge  \delta (-x) $.
Using that  $T(-x)= r(T)(-x) + y$, we easily deduce  that 
$$T(-x) \geq (r(T)+ \delta)(-x).$$ 
By linearity and positivity of $T$ we obtain for all $n \in \mathbb{N}$
     $$T^{n}(-x) \geq (r(T)+ \delta)^{n}(-x).$$
On the other hand, since we are assuming the cone $\mathcal C$ with non-empty interior, there must be $a\in\mathcal C$ and   $\delta' > 0$  such that  $a + \delta' (-x) \geq  0$. Taking  $K= 1/\delta'$ we obtain $Ka - x \geq 0$ and 
$$T^{n}(-x) + (r(T)+ \delta)^{n}(Ka)\geq (r(T)+ \delta)^{n}(-x + Ka) \ge 0.     $$
Since $\mathcal{C}$ is a normal cone,  by definition there is $\gamma>0$ such that
$$\gamma\| T^{n}(-x) + [r(T) + \delta]^{n}Ka\| \geq  \| [r(T) + \delta]^{n}(-x + Ka) \| .$$ 
It easily follows that for all odd   $n \in \mathbb{N}$  we have
   $$\sqrt[n]{\gamma\| -x\| } \sqrt[n]{\| T^{n}\| }  \geq  [r(T)+ \delta] \sqrt[n]{\| Ka - x\|  - \gamma \| Ka \| } .    $$
taking limit in  $n$ we obtain $r(T)\geq r(T) + \delta $, which is an absurd. Hence, we must have  $E_{0}\cap \inter(\mathcal{C} )= \emptyset$.  

Now since $E_0$ and $\mathcal C$ are convex sets ($E_0$ is actually a vectorial subspace of $E$), 
Mazur\rq{s} Theorem guarantees the existence of a closed real hyperplane separating them. 
This means that there exist  $c \in \mathbb{R}$ and a non-trivial continuous linear  functional $x^{\ast}\in E^*$ such that
               $$\sup_{x \in E_{0}} \mathcal{R} x^{\ast}(x) \leq c \leq \inf _{y \in \mathcal{C} }\mathcal{R} x^{\ast}(y).$$
Since  $0$ belongs to both $E_{0}$ and $ \mathcal{C}$ we must have $c=0$. Moreover, as $E_{0}$ is a vectorial subspace of $E$ we necessarily have $x^{\ast}(x)=0 $ for all $x \in  E_{0}$. This implies that for all $x \in E$ we have 
$$
                       x^{\ast} (r(T)x -Tx) =0 ,
                       $$
                       which is equivalent to 
                       $$
                      T^{\ast} x^{\ast} (x) = r(T)x^{\ast}(x).   
$$
This gives in particular that the spectral radius of $T$  is an eigenvalue of~$T^{\ast}$.
Moreover, since $\mathcal{R}x^{\ast}(y) \geq 0$ for all $y \in \mathcal{C}$, and so  we have  $x^{\ast} \in \mathcal{C}^{\ast}$.
\end{proof}

%In this section we prove the stability of the equilibrium states associated to hyperbolic potentials defined in Section~\ref{Results}. The strategy is to prove the equilibrium stability for the base dynamics and show that the skew product carries the same property. 

\subsection{Hyperbolic times}
Consider $f\in\mathcal F$ and $\Sigma_c(f)$ as in Subsection~\ref{se.NUE}.
%
%As described in Section~\ref{Results} we consider a family 
% $$\mathcal H_c=\{(f,\phi): f\in\mathcal F\text{ and $\phi:M\to\mathbb R$ is H\"older continuous}  \},$$ and assume that there exists $c>0$ such that  $\phi$ is $c$-hyperbolic for each  $(f,\phi) \in \mathcal{H}_1$.
In order to explore the non-uniform expansion on the set $\Sigma_c(f)$ we  use the following concept  introduced in \cite{Alves} and generalized in \cite{ABV}.
%\begin{Def}[Hyperbolic Times] 
We say that $n$ is a \textit{hyperbolic time} for $x$ if 
$$\prod_{j=n-k}^{n-1}\|Df(f^{j}(x))^{-1}\|\leq e^{-ck/2},\quad\text{for all }1\leq k< n.$$
%\end{Def}
Observe that as we are assuming maps with no critical/singular sets, the definition of hyperbolic times given in \cite[Definition~5.1]{ABV} reduces  to the one  we present here.
Condition \eqref{limsup} of non-uniform expansion  is enough to guarantee the existence of infinitely many hyperbolic times for the points in $\Sigma_c(f)$. For a proof of the next result see \cite[Lemma~5.4]{ABV}.

\begin{Lema}\label{le.infinite}
Each $x\in \Sigma_c(f)$ has infinitely many hyperbolic times.
\end{Lema}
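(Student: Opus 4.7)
My plan is a direct invocation of Pliss's Lemma applied to the continuous function $\psi:M\to\mathbb R$ defined by $\psi(y)=-\log\|Df(y)^{-1}\|$. Observe first that, because $f$ is a $C^1$ local diffeomorphism on the compact manifold $M$, the map $\psi$ is continuous and hence bounded; fix $H>0$ with $\psi\leq H$. The condition $x\in\Sigma_c(f)$ is equivalent to
$$\liminf_{n\to\infty}\frac{1}{n}\sum_{j=0}^{n-1}\psi(f^j(x))\geq c,$$
so, choosing $\epsilon=c/4$, I obtain $N\in\mathbb N$ such that
$$\sum_{j=0}^{n-1}\psi(f^j(x))\geq \frac{3c}{4}\,n \qquad\text{for every } n\geq N.$$

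The heart of the argument is to apply Pliss's Lemma in the following standard form: given real numbers $a_1,\dots,a_n$ satisfying $a_j\leq H$ and $\sum_{j=1}^n a_j\geq c_2 n$, for any $0<c_1<c_2$ there exist $\theta=(c_2-c_1)/(H-c_1)>0$ and at least $\theta n$ indices $1\leq n_1<\cdots<n_\ell\leq n$ such that $\sum_{j=n'+1}^{n_i}a_j\geq c_1(n_i-n')$ for every $0\leq n'<n_i$. I would take $a_j=\psi(f^{j-1}(x))$, $c_1=c/2$ and $c_2=3c/4$; the sum hypothesis is exactly the displayed inequality of the previous paragraph. Reindexing by $j'=j-1$ and setting $k=n_i-n'$, the Pliss conclusion becomes
$$\sum_{j'=n_i-k}^{n_i-1}\psi(f^{j'}(x))\geq \frac{c}{2}\,k,\qquad 1\leq k\leq n_i,$$
and exponentiating (using $\psi=-\log\|Df(\cdot)^{-1}\|$) yields $\prod_{j'=n_i-k}^{n_i-1}\|Df(f^{j'}(x))^{-1}\|\leq e^{-ck/2}$, which is precisely the defining inequality for $n_i$ to be a hyperbolic time for $x$.

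To finish, since $\theta>0$ is independent of $n$, for every $n\geq N$ the interval $\{1,\dots,n\}$ contains at least $\theta n$ hyperbolic times of $x$; letting $n\to\infty$ produces infinitely many. The only nontrivial ingredient is Pliss's Lemma itself, which is a purely combinatorial statement about real sequences; I would regard this as the main (but well-known) obstacle of the argument. Once Pliss is in hand, the remaining steps are merely bookkeeping to pass between the additive cocycle inequality it delivers and the multiplicative formulation of hyperbolic time via exponentiation.
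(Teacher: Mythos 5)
Your proof is correct and uses precisely the argument behind the result the paper cites: the paper does not prove Lemma~\ref{le.infinite} directly but refers to \cite[Lemma~5.4]{ABV}, whose proof is the Pliss's Lemma argument you have written out. Your translation from the $\limsup$ form of \eqref{limsup} to the $\liminf$ statement for $\psi=-\log\|Df(\cdot)^{-1}\|$, the choice of thresholds $c_1=c/2<c_2=3c/4$, and the exponentiation back to the multiplicative form of the hyperbolic-time definition are all in order, so this is the same route rather than a genuinely different one.
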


The next result shows  that the iterates of a map at hyperbolic times behave locally as  uniformly expanding maps. We refer the reader to \cite[Lemma~5.2 ]{ABV} for the proof of the first item and \cite[Corollary~5.3]{ABV}  for the second one, with $\phi$ playing the role of $\log|\det Df|$ in the latter case.
\begin{Lema}\label{distortion} There exist $K,\delta_1>0$ such that for every  $0<\varepsilon\le\delta_1$ and every hyperbolic time~$n$ for $x\in M$  the dynamic ball $B_{\varepsilon}(x,n)$ is mapped diffeomorphically under $f^{n}$ onto the ball $B(f^{n}(x),\varepsilon)$. Moreover, for all H\"older continuous potential~$\phi$ and all $y, z\in B_{\varepsilon}(x,n)$ we have
\begin{enumerate}
%\item
\item $
d(f^{n-j}(y), f^{n-j}(z))\leq e^{-cj/4}d(f^{n}(y), f^{n}(z))
$ for each $1\le j\le n$;
\item $\displaystyle
\frac1{K}\le  {e^{S_{n}\phi(y)-S_{n}\phi(z)}} \le K.
 $
\end{enumerate}
\end{Lema}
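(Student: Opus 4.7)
The plan is to build the local inverse branch of $f^n$ on $B(f^n(x),\varepsilon)$ by pulling back iteratively along the orbit of $x$, using the hyperbolic time condition to keep the pull-back balls small enough for $f$ to remain injective on them. Item~(1) will then fall out of the backward contraction rates, and item~(2) will follow from (1) together with the H\"older continuity of $\phi$.

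First I would fix uniform data coming from the compactness of $M$: a radius $\delta_0>0$ such that $f$ is injective on every ball of radius $\delta_0$, and (shrinking $\delta_0$ if necessary, using uniform continuity of $u\mapsto \|Df(u)^{-1}\|$) such that $\|Df(u)^{-1}\|\le e^{c/8}\|Df(v)^{-1}\|$ whenever $d(u,v)\le \delta_0$. I then set $\delta_1:=\delta_0/2$. By induction on $k=0,1,\dots,n$ I would produce an inverse branch $h_k$ of $f^k$ on $B(f^n(x),\varepsilon)$ with $h_k(f^n(x))=f^{n-k}(x)$ and
$$
h_k\bigl(B(f^n(x),\varepsilon)\bigr)\subset B\bigl(f^{n-k}(x),\varepsilon e^{-ck/4}\bigr).
$$
At the inductive step one composes $h_k$ with the local inverse of $f$ around $f^{n-k-1}(x)$; the mean value theorem and the distortion control give
$$
\diam\bigl(h_{k+1}(B(f^n(x),\varepsilon))\bigr)\le e^{c/8}\,\|Df(f^{n-k-1}(x))^{-1}\|\cdot\diam\bigl(h_k(B(f^n(x),\varepsilon))\bigr).
$$
Iterating and invoking the hyperbolic time inequality
$$
\prod_{j=n-k-1}^{n-1}\|Df(f^j(x))^{-1}\|\le e^{-c(k+1)/2}
$$
yields the advertised contraction by $e^{-c(k+1)/4}$ with comfortable room to spare. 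Taking $k=n$ identifies $h_n(B(f^n(x),\varepsilon))$ with the dynamic ball $B_\varepsilon(x,n)$ and shows that $f^n$ maps it diffeomorphically onto $B(f^n(x),\varepsilon)$; applying the same estimate at intermediate times~$j$ yields~(1).

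For~(2), I would use~(1) together with the H\"older continuity of $\phi$ to bound
$$
|S_n\phi(y)-S_n\phi(z)|\le |\phi|_\alpha\sum_{j=0}^{n-1}d(f^j(y),f^j(z))^\alpha\le |\phi|_\alpha(2\varepsilon)^\alpha\sum_{i=1}^{n}e^{-ci\alpha/4},
$$
which is bounded by a constant $\log K$ depending only on $\phi$, $c$, $\alpha$ and $\delta_1$; exponentiating yields~(2).

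The main obstacle, and really the only delicate point, is the choice of $\delta_1$: it must be small enough that \emph{every} pull-back ball produced during the induction still lies in a region where $f$ is injective and where the distortion of $\|Df(\cdot)^{-1}\|$ is well controlled, while the radii of the successive pullbacks shrink at the advertised rate $e^{-c/4}$. Once these uniform constants are locked in the induction runs cleanly, and the remainder reduces to a geometric-series computation.
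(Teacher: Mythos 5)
Your argument is correct and is essentially the standard proof of \cite[Lemma~5.2 and Corollary~5.3]{ABV}, to which the paper simply refers the reader rather than giving a self-contained argument. The only point worth making explicit is the mild bootstrap hidden in the inductive step: the bound on $\diam\bigl(h_{k+1}(B(f^n(x),\varepsilon))\bigr)$ uses the distortion control on a region that is only known to be small \emph{after} the bound is established, which is resolved by the usual path-length/continuation argument (your appeal to the mean value theorem gestures at it, and the geometric rate $e^{-3ck/8}<e^{-ck/4}$ you compute leaves exactly the slack needed for it to close).
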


%A dynamic ball $B_{\delta}(x,n)$ for which $n$ is a hyperbolic time for $x$ will be called  a \emph{hyperbolic dynamic ball}.

\begin{Remark}
The constant $\delta_1>0$ in the previous lemma can be taken uniform in a neighborhood of $f\in\mathcal F$. Actually, $\delta_1$ is obtained  as the conclusion of the Claim in the proof of \cite[Lemma~5.2]{ABV} with $\sigma=e^{-c/2}$, where the only requirement is that
\begin{equation}
\label{e.littleloss}
\|Df(y)^{-1}\| \le e^{c/4} \|Df(f^{n-j}(x))^{-1}\|,
\end{equation}
for any $1 \le j < n$ and any $y$ in the ball of
radius $2\delta_1 e^{-cj/4}$ around $f^{n-j}(x)$. In our situation, where  $f$ is a local  diffeomorphism  on a compact manifold, we can clearly choose $\delta_1>0$ sufficiently small so that
\begin{equation*}
\label{e.littleloss2}
\|Df(y)^{-1}\| \le e^{c/4} \|Df(z)^{-1}\|
\end{equation*}
whenever $d(y,z)<2\delta_1$. 
This choice of $\delta_1$ is naturally sufficient for~\eqref{e.littleloss} to hold and can be taken uniform in a neighborhood of $f$ in the $C^1$ topology. 
%Observe also that  the only requirement about $0<\sigma<1$ in the proof of \cite[Lemma~5.4]{ABV} is that $-c<\log\sigma$. Hence,  the value of $\sigma$ can as well be taken uniform in a neighborhood of $f$ in the $C^1$ topology.
\end{Remark}

%%%%%%%%%%%%%%%%%%%%%%%%%%%%%%%%%%%%%%%%%%%%%%%%%%%%%%%%%%%%%%%%%%%%%%%%%%%%%%%%%%%%%%%%%%%%%%%%%%                %%%%%%%%%%%%%%%%%%%%%%%%%%%%%%%%%%%%%%%%%%%%%%%%%%%%%%%%%%%%%%%%%%%%%%%%%%%%%%%%%%%%%%%%%%%%%%%%%%%%%%%%%%                                      Estabilidade 
%%%%%%%%%%%%%%%%%%%%%%%%%%%%%%%%%%%%%%%%%%%%%%%%%%%%%%%%%%%%%%%%%%%%%%%%%%%%%%%%%%%%%%%%
%%%%%%%%%%%%%%%%%%%%%%%%%%%%%%%%%%%%%%%%%%%%%%%%%%%%%%%%%%%%%%%%%%%%%%%%%%%%%%%%%%%%%%%%

%We will start by proving some technical results in order to obtain the equilibrium  stability of the family $\mathcal H_c$. 
 \subsection{The reference measure}      Let $C^{0}(M)$ be the space of continuous functions $\phi:M\to \mathbb R$ endowed with the sup norm. Fix $c>0$ and define  for $(f,\phi)\in \mathcal H_c$ the {\emph{transfer operator}} $$\mathcal{L}_{f,\phi} : C^{0}(M) \rightarrow C^{0}(M),$$    associating to each $\varphi \in C^{0}(M)$ the continuous  function
$ \mathcal{L}_{f,\phi} (\varphi) \colon M\to \mathbb{R}$ defined by
$$\label{optransf}
\mathcal{L}_{f,\phi} \varphi \left(x\right) = \displaystyle\sum _{y  \in \, f^{-1}\left(x\right)} e^{\phi(y)} \varphi(y). 
$$
We clearly have that $ \mathcal{L}_{f,\phi} $ is a bounded linear operator. Moreover, 
considering $C^0(M)$ ordered by the cone $\mathcal C$  of nonnegative functions, we have that $ \mathcal{L}_{f,\phi} $ is a positive operator. 
%Since $f$ is a local homeomorphism, $\phi$ is continuous and $M$ is a compact metric space, we have that $\mathcal{L}_{f, \phi}$ is a positive bounded linear operator satisfying $$\deg(f)e^{\inf \phi}\leq\left\|\mathcal{L}_{f,\phi}\right\|\leq\textbf{\rm{deg}}(f)e^{\sup\phi}.$$ 
For each $n \! \in\! \mathbb{N}$ we have  
$$\label{iteradostransf}
\mathcal{L}_{f,\phi}^{n} \varphi (x) = \displaystyle\sum _{y  \in \, f^{-n}\left(x\right)} e^{S_{n}\phi(y)} \varphi \left( y \right),
$$
where $S_{n}\phi$ denotes the Birkhoff sum $$S_{n}\phi(x)= \displaystyle\sum_{j=0}^{n-1} \phi\big(f^{j}(x)\big).$$
Using that $\|\mathcal{L}_{f,\phi}^{n} \|=\|\mathcal{L}_{f,\phi}^{n}1 \|$ for all $n\ge1$,  we can easily deduce that the spectral radius $\lambda_{f,\phi}$ of $\mathcal{L}_{f,\phi}$ satisfies
\begin{equation}\label{eq.radius}
\deg(f)e^{\inf \phi}\leq  
\lambda_{f,\phi}
\leq\textbf{\rm{deg}}(f)e^{\sup\phi}.
\end{equation}
As we are considering $\mathcal{L}_{f,\phi} : C^{0}(M) \rightarrow C^{0}(M)$, by Riesz-Markov Theorem, we may think of  its dual operator $\mathcal{L}_{f,\phi}^{\ast}: \mathbb{P}(M) \to \mathbb{P}(M)$. Moreover,  we have
$$\int \varphi \ d\mathcal{L}_{f,\phi}^{\ast}\eta  = \int  \mathcal{L}_{f,\phi}(  \varphi ) \ d\eta , $$
for every $\varphi \in  C^{0}(M) $ and every $\eta \in \mathbb{P}(M)$. It follows from Lemma~\ref{Mazur} that
for each $(f,\phi)\in \mathcal H_c$  there exists a probability measure~$\nu_{f,\phi}$ satisfying $$\mathcal{L}_{f,\phi}^{\ast}\nu_{f,\phi}= \lambda_{f,\phi} \nu_{f,\phi} ,$$
where $\lambda_{f,\phi}$ is the spectral radius of $\mathcal{L}_{f,\phi} $.

The  next result gives   a \emph{Gibbs property} for the measure $ \nu_{f,\phi} $ at hyperbolic times. 
Our proof follows closely the proof of \cite[Proposition 5.2]{RV16}, where similar conclusions hold for the spectral radius  $\lambda_{f,\phi}$ and the respective eigenmeasure $\nu_{f,\phi}$.

\begin{Lema}\label{conforme}
Given $(f,\phi)\in\mathcal H_c$ 
and assuming $\mathcal{L}_{f,\phi}^{\ast}\nu= \lambda \nu$ for some  $\lambda \in \mathbb R$ and $\nu\in\mathbb P(M)$, then $\supp(\nu)=M$. Moreover, 
for all $\varepsilon\leq\delta_1$ there exists $C=C(\varepsilon)>0$ such that if  $n$ is a hyperbolic time for $x\in M$, then
\begin{eqnarray*} \label{Gibbs}
  C^{-1} \leq  \frac{\nu(B_\varepsilon(x, n))}{\exp(S_n\phi(y) - n\log\lambda)}  \leq C
	\end{eqnarray*}
	for all $y\in B_\varepsilon(x, n)$.
\end{Lema}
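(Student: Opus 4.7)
First I would prove that $\supp(\nu)=M$ using property~$(*)$. Testing $\mathcal{L}_{f,\phi}^{\ast}\nu=\lambda\nu$ against the constant function $1$ shows $\lambda=\int \mathcal{L}_{f,\phi}1\,d\nu>0$, so the eigenvalue iterates well. Given any open set $U\subset M$, choose a continuous $\varphi:M\to[0,1]$ supported in $U$ and equal to $1$ on some open neighborhood $W$. For every $x\in M$, the density of $\{f^{-n}(x)\}_{n\ge 0}$ yields some $n_x$ and $w\in f^{-n_x}(x)\cap W$, so $\mathcal{L}_{f,\phi}^{n_x}\varphi(x)\ge e^{S_{n_x}\phi(w)}>0$. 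Continuity and compactness then produce finitely many $x_1,\ldots,x_k$ such that $\Phi:=\sum_{i=1}^k \mathcal{L}_{f,\phi}^{n_{x_i}}\varphi$ is strictly positive on $M$, hence $\int\Phi\,d\nu>0$. Combined with $\int\mathcal{L}_{f,\phi}^n\varphi\,d\nu=\lambda^n\int\varphi\,d\nu$ and $\lambda>0$, this forces $\int\varphi\,d\nu>0$ and so $\nu(U)>0$.

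For the Gibbs bound, I would exploit that the dynamic ball $B_\varepsilon(x,n)=\bigcap_{j=0}^{n}f^{-j}(B(f^j(x),\varepsilon))$ is open and, by Lemma~\ref{distortion}, $f^n$ restricts to a diffeomorphism from $B_\varepsilon(x,n)$ onto $B(f^n(x),\varepsilon)$. The relation $\int\mathcal{L}_{f,\phi}^n\psi\,d\nu=\lambda^n\int\psi\,d\nu$ extends from continuous to bounded Borel $\psi$ because $\mathcal{L}_{f,\phi}^{\ast n}\nu=\lambda^n\nu$ as Borel measures. Applying this to $\psi=\chi_{B_\varepsilon(x,n)}$ and using injectivity of $f^n$ on the dynamic ball, the unique preimage $w_z\in B_\varepsilon(x,n)$ of $z$ is the only one contributing, so
\begin{equation*}
\lambda^n\nu\bigl(B_\varepsilon(x,n)\bigr)=\int_{B(f^n(x),\varepsilon)}e^{S_n\phi(w_z)}\,d\nu(z).
\end{equation*}
The distortion estimate in Lemma~\ref{distortion}(ii) now lets me replace $e^{S_n\phi(w_z)}$ by $e^{S_n\phi(y)}$ for any fixed $y\in B_\varepsilon(x,n)$, up to the constant $K$, yielding
\begin{equation*}
K^{-1}\,\nu\bigl(B(f^n(x),\varepsilon)\bigr)\leq\frac{\nu\bigl(B_\varepsilon(x,n)\bigr)}{\exp\bigl(S_n\phi(y)-n\log\lambda\bigr)}\leq K\,\nu\bigl(B(f^n(x),\varepsilon)\bigr).
\end{equation*}

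It remains to bound $\nu(B(f^n(x),\varepsilon))$ uniformly. The upper bound $\leq 1$ is automatic, and for the lower bound I use $\supp(\nu)=M$: by compactness, cover $M$ by finitely many balls $B(p_i,\varepsilon/2)$ and set $c_\varepsilon:=\min_i\nu(B(p_i,\varepsilon/2))>0$. Any $p\in M$ lies in some $B(p_i,\varepsilon/2)$, so $B(p_i,\varepsilon/2)\subset B(p,\varepsilon)$ and $\nu(B(p,\varepsilon))\ge c_\varepsilon$; hence $C:=K/c_\varepsilon$ does the job.

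The principal subtlety I anticipate is justifying the eigenvalue identity on the characteristic function $\chi_{B_\varepsilon(x,n)}$. Direct approximation by continuous functions sitting strictly inside the dynamic ball risks losing the $\nu$-mass one is trying to capture, so the clean route is to promote $\mathcal{L}_{f,\phi}^{\ast n}\nu=\lambda^n\nu$ from a relation between positive functionals on $C^0(M)$ to an equality of Borel measures via Riesz--Markov and monotone convergence, which delivers the identity on every Borel set.
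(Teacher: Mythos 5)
Your proposal is correct and proceeds along the same conceptual lines as the paper: it uses the conformal/eigenvalue relation for $\nu$, density of pre-orbits from property $(\ast)$, the fact that $f^n$ maps the hyperbolic dynamic ball homeomorphically onto a round ball, the distortion estimate of Lemma~\ref{distortion}(ii), and a compactness argument to obtain a uniform lower bound on the measure of $\varepsilon$-balls. The one place where you take a genuinely different route is the full-support argument. The paper first derives the Jacobian formula $\nu(f^k(A))=\int_A \lambda^k e^{-S_k\phi}\,d\nu$ for sets $A$ on which $f^k$ is injective, then decomposes an arbitrary open $U$ into injectivity domains $V_i(k)$ and sums to conclude that some piece has positive mass. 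You instead work with continuous test functions: choose $\varphi$ with $\chi_W\leq\varphi\leq\chi_U$, use density of pre-orbits to see $\mathcal{L}_{f,\phi}^{n_x}\varphi(x)>0$, and then exploit compactness to build a finite sum $\Phi=\sum_i\mathcal{L}_{f,\phi}^{n_{x_i}}\varphi$ that is strictly positive on $M$; the eigenvalue identity then forces $\int\varphi\,d\nu>0$. Your version has the advantage of working entirely within $C^0(M)$ and avoiding both the injectivity decomposition and the need to push $\nu$ forward through $f^k$; the paper's version gives the explicit conformality formula, which it anyway needs for the Gibbs bound. For the Gibbs estimate your computation is essentially the mirror image of the paper's ($\lambda^n\nu(B_\varepsilon(x,n))=\int_{B(f^n(x),\varepsilon)}e^{S_n\phi(w_z)}\,d\nu$ versus $\nu(B(f^n(x),\varepsilon))=\int_{B_\varepsilon(x,n)}\lambda^n e^{-S_n\phi}\,d\nu$), and you correctly flag that extending the eigenvalue identity to $\chi_{B_\varepsilon(x,n)}$ requires promoting the functional identity to an equality of Borel measures — the same approximation issue the paper handles by choosing $\psi_s\to\chi_A$ $\nu$-a.e.
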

\begin{proof} Since $\nu$ is an eingenmeasure associated to $\lambda$, it follows that if $A$ is a Borel set such that $f|_A$ is injective, then  for any sequence $\{\psi_{s}\}_s$ in $ C^0(M)$  which converges to the characteristic function $\chi_{A}$ of $A$ for  $\nu$ almost every point we have
$$\lambda\nu(e^{-\phi}\psi_{s})=\mathcal{L}^{*}_{\phi}\nu(e^{-\phi}\psi_{s})=\int_{M}{\mathcal{L}_{\phi}( e^{-\phi}\psi_{s}})d\nu\stackrel{s\to\infty}\longrightarrow \nu(f(A)).$$
As the first member converges to $\int_{A}\lambda e^{-\phi}d\nu$, we conclude that
$$\nu(f(A))=\int_{A}{\lambda e^{-\phi}}d\nu.$$
In particular, if $f^k\vert_A$ is injective  it follows that $$\nu(f^{k}(A))=\int_{A}\lambda^{k}e^{-S_{k}\phi}d\nu.$$
Let $U\subset M$ be an arbitrary open set. From the density of the pre-orbit $\{f^{-n}(x)\}_{n\geq 0}$ for every point $x\in M$ we have the inclusion $M\subset\bigcup_{k\in\mathbb{N}}f^{k}(U)$.
Since each $f^{k}$ is a local homeomorphism we can decompose $U$ into subsets $V_{i}(k)\subset U$ such that $f^{k}|_{V_{i}(k)}$ is injective. Using the fact that $\nu$ is an eingenmeasure, we deduce that 
\begin{eqnarray*}
1=\nu(M)\leq\sum_{k}\nu(f^{k}(U))
&\leq&\ds\sum_{k}\sum_{i}\int_{V_{i}(k)}{\lambda^{k}e^{-S_{k}\phi(x)}}d\nu\\ 
&\leq&\ds\sum_{k}\lambda^{k}\ds\sum_{i}\sup_{x\in V_{i}(k)}(e^{S_{k}\phi(x)})\nu(V_{i}(k)).
\end{eqnarray*}
Hence, there exists some $V_{i}(k)\subset U$ such that $\nu(U)\geq\nu(V_{i}(k))>0$. Thus we have $\supp(\nu)=M$.

Consider now $x\in M$ and $n\in \mathbb N$   a hyperbolic time for $x$. Since $f^{n}$ maps the hyperbolic dynamic ball $B_{\varepsilon}(x,n)$ homeomorphically onto the ball $B(f^{n}(x),\varepsilon)$, it  follows that there exists a uniform constant $\gamma_\varepsilon>0$ depending on the radius $\varepsilon$ of the ball such that
\begin{eqnarray*}
\gamma_\varepsilon\leq\nu(B(f^{n}(x),\varepsilon))=\nu(f^{n}(B_{\varepsilon}(x,n)))=\int_{B_{\varepsilon}(x,n)}{\hspace{-0.4cm}\lambda^{n}e^{-S_{n}\phi(z)}}\,d\nu\leq1.
\end{eqnarray*} 
Using the H\"older continuity of the potential $\phi$, we apply the distortion control on hyperbolic times (Lemma \ref{distortion}) to obtain
\begin{eqnarray*}
\gamma_\varepsilon\leq\int_{B_{\varepsilon}(x,n)}\hspace{-0.4cm}\lambda^{n}e^{-S_{n}\phi(y)} \frac{e^{-S_{n}\phi(z)}}{e^{-S_{n}\phi(y)}}\,d\nu
\!\!\!&\leq&\!\!\! Ke^{-S_{n}\phi(y)+n\log\lambda}\nu(B_{\varepsilon}(x,n))\\
\!\!\!&\leq&\!\!\! K\int_{B_{\varepsilon}(x,n)}\hspace{-0.4cm}\lambda^{n}e^{-S_{n}\phi(y)}\,d\nu\leq K.
\end{eqnarray*}
Hence, there exists $C=C(\varepsilon)>0$ such that
$$C^{-1} \leq  \frac{\nu(B_\varepsilon(x, n))}{\exp(S_n\phi(y) - n\log\lambda)}  \leq C$$ 
for all $y\in B_{\varepsilon}(x,n)$. %, thus having proved the Gibbs property \eqref{Gibbs}.
%From this property we derive the second part of the result. As every point $x\in \Sigma_c(f)$ has infinitely many hyperbolic times, for $\varepsilon$ small we can fix $N>1$ sufficiently large such that
%$$\Sigma_c(f)\subset\bigcup_{n\geq N}\bigcup_{x\in H_n}B_{\varepsilon}(x,n),$$
%where $H_n$ denotes the set of points that have $n$ as hyperbolic time. 
%By Besicovitch's Covering Lemma (see e.g. \cite{Pes}) there exists a countable family $F_{n}\subset H_{n}$ such that every point $x\in H_{n}$ is covering by at most $d$ dynamical balls $B_{\varepsilon}(x,n)$ with $x\in F_{n}.$ Hence 
%$$\mathcal{F}_{N}=\{B_{\varepsilon}(x,n); x\in F_{n}\;\textbf{\rm{and}}\;n\geq N\}$$ 
%is a countable open covering of $\Sigma_c(f)$ by hyperbolic dynamic balls with diameter less than $\varepsilon>0$. Applying the inequality \eqref{Gibbs} for each  element of $\mathcal{F}_{N}$ we have for some $\widetilde C>0$
%     $$\widetilde{C}^{-1} \leq \sum_{B_{\varepsilon}(x,n)\in \mathcal{F}_{N}}   \exp(S_n\phi(B_{\varepsilon}(x,n)) - n\log\lambda)  \leq \widetilde{C}.$$  	
%Using this estimate in the definition of relative pressure given in Section~\ref{hyp pot} it follows that $P_f(\phi, \Sigma_c(f))= \log \lambda$. Since $\phi$ is a hyperbolic potential we obtain that $P_f(\phi)= \log\lambda$. 
\end{proof}

%As consequence of Lemma~\ref{conforme} we have the following result.

\begin{Lema}\label{conformeunico}
For each $(f,\phi)\in \mathcal H_c$  we have   $\lambda_{f,\phi}=e^{P_f(\phi)}$ and this is the only real eigenvalue  of $\mathcal{L}_{f,\phi} ^*$. % that satisfies  
%\begin{equation}\label{eq.sei}
%\deg(f)e^{\inf \phi} \leq \lambda \leq \deg(f)e^{\sup \phi}.
%\end{equation}
\end{Lema}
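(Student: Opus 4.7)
The plan is to sandwich $\log\lambda_{f,\phi}$ between $P_f(\phi)$ via two opposite estimates and then reuse the same machinery to rule out any other real eigenvalue. For the upper bound $P_f(\phi)\le\log\lambda_{f,\phi}$, I exploit the $c$-hyperbolicity of $\phi$: since $P_f(\phi)=P_f(\phi,\Sigma_c(f))$, it suffices to bound the relative pressure on $\Sigma_c(f)$. By Lemma~\ref{le.infinite} every $x\in\Sigma_c(f)$ admits infinitely many hyperbolic times, so for any $N\in\mathbb N$ I can cover $\Sigma_c(f)$ by dynamic balls $B_\varepsilon(x_i,n_i)\in\mathcal F_N$ with each $n_i$ a hyperbolic time for $x_i$, and via a Besicovitch/Vitali-type selection I may assume the cover has multiplicity bounded by some $M$. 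Applying Lemma~\ref{conforme} to $\nu_{f,\phi}$ together with the distortion estimate in Lemma~\ref{distortion}(2),
$$e^{R_{n_i,\varepsilon}\phi(x_i)}\le K\,e^{S_{n_i}\phi(x_i)}\le KC\,\lambda_{f,\phi}^{n_i}\,\nu_{f,\phi}(B_\varepsilon(x_i,n_i)),$$
so for any $\gamma>\log\lambda_{f,\phi}$ the sum $\sum_i e^{-\gamma n_i+R_{n_i,\varepsilon}\phi(x_i)}$ is dominated by $KCM\,e^{-(\gamma-\log\lambda_{f,\phi})N}$, which tends to $0$ as $N\to\infty$. Hence $m_f(\phi,\Sigma_c(f),\varepsilon,\gamma)=0$, forcing $P_f(\phi,\Sigma_c(f),\varepsilon)\le\log\lambda_{f,\phi}$ for every $\varepsilon$, and letting $\varepsilon\to 0$ yields $P_f(\phi)\le\log\lambda_{f,\phi}$.

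For the reverse inequality I use separation of preimages. Since $f$ is a $C^1$ local diffeomorphism on the compact manifold $M$ there is $\varepsilon_0>0$ such that $f$ is injective on every ball of radius $\varepsilon_0$. A short backward induction on $i=n-1,\dots,0$ shows that for each $x\in M$ and each $n\ge 1$ the fiber $f^{-n}(x)$ is $(n,\varepsilon_0)$-separated, since two preimages staying within distance $\varepsilon_0$ of one another throughout their orbits would be forced to coincide by repeated local injectivity. Consequently $\mathcal L_{f,\phi}^n 1(x)=\sum_{y\in f^{-n}(x)}e^{S_n\phi(y)}$ is bounded above by $\sup_E\sum_{y\in E}e^{S_n\phi(y)}$, the supremum being over all $(n,\varepsilon_0)$-separated subsets of $M$. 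Taking $\sup_{x\in M}$, then $\tfrac{1}{n}\log$, then $n\to\infty$, and combining the identity $\log\lambda_{f,\phi}=\lim_n\tfrac{1}{n}\log\|\mathcal L_{f,\phi}^n 1\|_0$ (which follows from positivity of $\mathcal L_{f,\phi}$) with the standard separated-set characterization of topological pressure, I obtain $\log\lambda_{f,\phi}\le P_f(\phi)$.

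Combining the two bounds gives $\lambda_{f,\phi}=e^{P_f(\phi)}$. For uniqueness, suppose $\lambda'\in\mathbb R$ is any real eigenvalue of $\mathcal L_{f,\phi}^*$ with a probability eigenmeasure $\nu'$; Lemma~\ref{conforme} then applies verbatim with $(\nu',\lambda')$ in place of $(\nu,\lambda)$, so repeating the first paragraph yields $P_f(\phi)\le\log\lambda'$. On the other hand $\lambda'\le r(\mathcal L_{f,\phi})=\lambda_{f,\phi}=e^{P_f(\phi)}$ since the spectral radius dominates every eigenvalue in modulus, and both numbers being positive we conclude $\lambda'=\lambda_{f,\phi}$. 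I expect the cover-multiplicity step of the upper bound to be the main technical nuisance, requiring a Besicovitch/Vitali-type selection among hyperbolic dynamic balls; the lower bound is essentially a one-line injectivity argument once spotted, and the uniqueness assertion is immediate once the Gibbs property is known to hold for any real positive eigenvalue.
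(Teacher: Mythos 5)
Your argument is correct and uses essentially the same machinery as the paper: the Gibbs property at hyperbolic times (Lemma~\ref{conforme}) together with a Besicovitch-type covering to bound the relative pressure $P_f(\phi,\Sigma_c(f))$ by $\log\lambda$ for any real eigenvalue $\lambda$ with a probability eigenmeasure, which is precisely how the paper proves uniqueness. The only difference is that you additionally supply a self-contained proof of the identity $\lambda_{f,\phi}=e^{P_f(\phi)}$ (via the separated-preimages bound for the lower inequality), whereas the paper simply cites \cite[Proposition~5.1]{RV16} for that fact.
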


\begin{proof}
The fact that $\lambda_{f,\phi}=e^{P_f(\phi)}$ has been proved in \cite[Proposition~5.1]{RV16}.
Assume now that $\mathcal{L}_{f,\phi}^{\ast}\nu= \lambda \nu$ for some  $\lambda \in \mathbb R$ and $\nu\in\mathbb P(M)$. As every point $x\in \Sigma_c(f)$ has infinitely many hyperbolic times, for $\varepsilon>0$ small we can fix $N>1$ sufficiently large such that
$$\Sigma_c(f)\subset\bigcup_{n\geq N}\bigcup_{x\in H_n}B_{\varepsilon}(x,n),$$
where $H_n$ denotes the set of points that have $n$ as a hyperbolic time. 
Using the fact that each $f^n(B_\varepsilon(x,n))$ is the ball $B(f^n(x),\varepsilon)$ in $M$ and Besicovitch Covering Lemma it is not difficult to  see that there exists a countable family $F_{n}\subset H_{n}$ such that every point $x\in H_{n}$ is covered by at most $d=\dim(M)$ dynamical balls $B_{\varepsilon}(x,n)$ with $x\in F_{n}.$ Hence 
$$\mathcal{F}_{N}=\left\{B_{\varepsilon}(x,n) : x\in F_{n}\;\textbf{\rm{and}}\;n\geq N\right\}$$ 
is a countable open covering of $\Sigma_c(f)$ by hyperbolic dynamic balls with diameter less than $\varepsilon>0$.

Take now any $\gamma>\log\lambda$. Recalling the definition of relative pressure given in Section~\ref{hyp pot} we apply Lemma~\ref{conforme} to each element in $\mathcal{F}_{N}$ to deduce that for some $\widetilde C=\widetilde C(\varepsilon)>0$ we have
     \begin{equation*}\label{eq.tilde}
     \sum_{B_{\varepsilon}(x,n)\in \mathcal{F}_{N}}   e^{ - n\log\lambda+R_{n,\varepsilon}\phi(x)}  \leq \widetilde{C}.
     \end{equation*} 	
     Hence
\begin{eqnarray*}
\sum_{B_{\varepsilon}(x,n)\in \mathcal{F}_{N}}   e^{-\gamma n+R_{n,\varepsilon}\phi(x) }\leq\widetilde C\sum_{n\geq N}e^{-(\gamma-\log\lambda)n}
\leq \widetilde C e^{-(\gamma-\log\lambda)N}
\end{eqnarray*}
Taking  limit in $N$ we obtain 
$$m_{f}(\phi, \Sigma_c(f), \varepsilon, \gamma)=\lim_{N\rightarrow +\infty}{m_{f}(\phi, \Sigma_c(f), \varepsilon, N,\gamma)}=0.$$
As this holds for arbitrary $\varepsilon>0$ and $\gamma>\log\lambda$ we necessarily have $$P_{f}(\phi,\Sigma_c(f))\leq\log\lambda.$$
Now, since $\phi$ is a hyperbolic potential and $\lambda_{f,\phi}$ is the spectral radius of $\mathcal{L}_{f,\phi}^{\ast}$ we  get $$\log\lambda\leq \log\lambda_{f,\phi}=P_f(\phi)=P_{f}(\phi,\Sigma_c(f))\leq\log\lambda,$$
thus having proved the result.
%
%Applying  Lemma~\ref{conforme} to each  element of $\mathcal{F}_{N}$ we can easily deduce that for some $\widetilde C=\widetilde C(\varepsilon)>0$ we have
%     \begin{equation}\label{eq.tilde}
%     \sum_{B_{\varepsilon}(x,n)\in \mathcal{F}_{N}}   \exp(S_n\phi(B_{\varepsilon}(x,n)) - n\log\lambda)  \leq \widetilde{C}.
%     \end{equation} 	
%it follows that $P_f(\phi, \Sigma_c(f))= \log \lambda$. Since $\phi$ is a hyperbolic potential we finally get $ \log\lambda=P_f(\phi)=\log\lambda_{f,\phi}$. 
%
%To conclude the proof, note that if there exists another real eigenvalue $\lambda_1$ in the interval $[\deg(f)e^{\inf \phi},  \deg(f)e^{\sup \phi}] $, then $\lambda_1$ satisfies 
%\marginpar{N�o vejo como}
%all above computations, and so $P_f(\phi, \Sigma_c(f))= \log \lambda_1$. As we proved that $P_f(\phi, H)= \log \lambda$, then $\lambda_1$ is equal to the spectral radius.
	\end{proof}

\section{Equilibrium Stability}  \label{stability}

In this section we finish the proof of Theorem~\ref{statistical para f}. 
Let  $\mathcal{B}(C^\alpha(M)) $ be the space of bounded linear maps from   $C^\alpha(M)$ to $C^\alpha(M)$, considering the norm $\|\quad\|_\alpha$ in the first space and  $\|\quad\|_0$ in the second one.
 As we have the functions $f$ and $\phi$ H\"older continuous for $(f,\phi)\in\mathcal H_c$, one easily sees that 
  $$\mathcal L_{f,\phi}\left(C^\alpha(M)\right)\subset C^\alpha(M).$$
We introduce  a map
$$\Gamma: \mathcal H_c \longrightarrow  \mathcal{B}(C^\alpha(M)) ,$$
assigning to each $(f,\phi)\in\mathcal H_c$ the restriction  of $\mathcal{L}_{f, \phi}$ to  $C^\alpha(M)$.
In the next result shall we deduce the continuity of $\Gamma$, which obviously implies the weaker pointwise convergence:  for any $\psi\in C^\alpha(M)$ we have $\mathcal L_{f_n,\phi_n}(\psi)$ converging to $\mathcal L_{f,\phi}(\psi)$ in $C^0$-norm. This pointwise convergence actually what we need for for what comes next. However, as the proof of this statement is essentially the same of the weaker pointwise convergence we leave here this   stronger version.

\begin{Lema}\label{C1 continuity} 
$\Gamma$ is continuous.
\end{Lema}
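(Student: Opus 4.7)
The plan is to fix $(f,\phi)\in\mathcal H_c$ and a sequence $(f_n,\phi_n)\to(f,\phi)$ in $\mathcal F\times C^\alpha(M)$, and show that
$$\sup_{\|\psi\|_\alpha\le 1}\bigl\|\mathcal L_{f_n,\phi_n}\psi-\mathcal L_{f,\phi}\psi\bigr\|_0\longrightarrow 0.$$
The heart of the argument is a uniform control on the difference of the preimages of $f_n$ and $f$, combined with the fact that the Hölder norm gives a uniform modulus of continuity of the test function $\psi$ on the bounded set $\{\|\psi\|_\alpha\le 1\}$.

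First I would set up uniform inverse branches. Since $f$ is a $C^1$ local diffeomorphism of the compact manifold $M$, there exist $k=\deg(f)\in\mathbb N$ and $\rho>0$ such that for every $x\in M$ the set $f^{-1}(B(x,2\rho))$ splits as a disjoint union of $k$ open sets, on each of which $f$ is a diffeomorphism onto $B(x,2\rho)$. For $f_n$ sufficiently $C^1$-close to $f$ the same holds with $\rho$ in place of $2\rho$ and with the same degree $k$. After a finite cover of $M$ by balls $B(x_j,\rho/2)$ and a consistent labelling of branches on each one, one obtains continuous selections $x\mapsto y_i(x)$ and $x\mapsto y_i^n(x)$, $i=1,\dots,k$, of the preimages of $x$ under $f$ and $f_n$ respectively, with
$$\varepsilon_n:=\sup_{x\in M}\max_{1\le i\le k} d\bigl(y_i^n(x),y_i(x)\bigr)\longrightarrow 0,$$
which follows from the inverse function theorem applied uniformly on the chosen cover and the $C^1$ convergence $f_n\to f$.

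With this in hand I would decompose, for any $\psi\in C^\alpha(M)$ and any $x\in M$,
$$\mathcal L_{f_n,\phi_n}\psi(x)-\mathcal L_{f,\phi}\psi(x)=\sum_{i=1}^k\Bigl[e^{\phi_n(y_i^n(x))}\psi(y_i^n(x))-e^{\phi(y_i(x))}\psi(y_i(x))\Bigr],$$
and bound each summand by
$$\bigl|e^{\phi_n(y_i^n)}-e^{\phi(y_i)}\bigr|\,|\psi(y_i^n)|+e^{\phi(y_i)}\,|\psi(y_i^n)-\psi(y_i)|.$$
Using $|e^a-e^b|\le e^{\max(a,b)}|a-b|$ together with
$$|\phi_n(y_i^n)-\phi(y_i)|\le\|\phi_n-\phi\|_0+|\phi|_\alpha\,\varepsilon_n^\alpha,$$
the first factor is uniformly small because $\phi_n\to\phi$ in $\|\cdot\|_0$ and $\varepsilon_n\to 0$; the factor $|\psi(y_i^n)|$ is controlled by $\|\psi\|_0\le\|\psi\|_\alpha$. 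For the second factor the Hölder norm of $\psi$ yields
$$|\psi(y_i^n)-\psi(y_i)|\le|\psi|_\alpha\,\varepsilon_n^\alpha\le\|\psi\|_\alpha\,\varepsilon_n^\alpha,$$
while $e^{\phi(y_i)}\le e^{\|\phi\|_0}$. Assembling these estimates, one obtains a constant $C_n$ depending only on $(f,\phi)$ and $(f_n,\phi_n)$, with $C_n\to 0$, such that
$$\bigl\|\mathcal L_{f_n,\phi_n}\psi-\mathcal L_{f,\phi}\psi\bigr\|_0\le C_n\,\|\psi\|_\alpha,$$
and taking the supremum over $\|\psi\|_\alpha\le 1$ gives the continuity of $\Gamma$ at $(f,\phi)$.

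The main obstacle is the first step: producing a coherent labelling of the preimages so that the quantity $\varepsilon_n$ makes sense globally and tends to zero uniformly in $x$. This relies on the compactness of $M$, the constancy of the degree of a local diffeomorphism, and the uniform version of the inverse function theorem for $C^1$-small perturbations. Once this bookkeeping is done, everything else is a routine use of the Hölder estimate $|\psi(y)-\psi(z)|\le\|\psi\|_\alpha d(y,z)^\alpha$ that produces a bound linear in $\|\psi\|_\alpha$, which is exactly what is needed for the operator norm $\mathcal B(C^\alpha(M))\to C^0(M)$ considered in the statement.
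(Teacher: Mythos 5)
Your proof is correct and follows essentially the same route as the paper: you expand the operator norm as a supremum over $\|\psi\|_\alpha\le 1$ and $x\in M$, split the difference at each summand into a term controlled by $|e^{\phi_n(y_i^n)}-e^{\phi(y_i)}|$ and a term controlled by $|\psi(y_i^n)-\psi(y_i)|$, and then send both to zero. Where you improve on the paper's exposition is in making two steps explicit that the paper treats somewhat informally: you set up a globally coherent labelling of the inverse branches and define $\varepsilon_n=\sup_x\max_i d(y_i^n(x),y_i(x))$, proving that the preimage convergence is \emph{uniform} in $x$ (the paper only argues convergence for a fixed $x$ via an accumulation-point argument and does not comment on uniformity), and you explicitly use the H\"older estimate $|\psi(y_i^n)-\psi(y_i)|\le\|\psi\|_\alpha\,\varepsilon_n^\alpha$ to obtain a bound that is uniform over the unit ball of $C^\alpha(M)$, which is precisely what the operator norm $\mathcal B(C^\alpha(M))\to C^0(M)$ requires (the paper asserts this uniformity without exhibiting the estimate). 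These details are genuinely needed, so your write-up is a cleaner version of the same argument rather than a different proof.
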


\begin{proof} Given $(f, \phi)\in \mathcal H_c$, let $(f_n, \phi_n)_n$ be an arbitrary sequence in $\mathcal H_c$ converging to $(f, \phi)$. For each  $x\in M$ and $i=1,\dots, \deg(f)$ denote by $y_i$ the preimage of $x$ under $f$. Since $\deg(f_n)=\deg(f)$ for all $n\in\mathbb{N}$, it follows that there exists a unique point $y_{i, n}$, preimage of $x$ under~$f_n$,  in a neighborhood  of $y_i$ for every  $i=1, \dots, \deg(f)$. We point out that the sequence $(y_{i, n})_n$ converges to $y_i$ when $n$ goes to infinity. In fact, let $z$ be an accumulation point of $(y_{i, n})_n$. From the uniform convergence of $(f_n)_n$ to $f$ we have that the sequence $(f_{n}(y_{i,n}))_n$ converges to $f(z)$. Since  $f_{n}(y_{i,n})=x$ for all $n$, we conclude that $z=y_i.$

By definition of the operator norm in $\mathcal{B}(C^\alpha(M))$ we have
\begin{eqnarray*}
\left\|\mathcal{L}_{f, \phi}-\mathcal{L}_{f_n, \phi_n}\right\|
&=& \sup_{\left\|\varphi\right\|_\alpha\leq 1} \|\mathcal{L}_{f, \phi} (\varphi) - \mathcal{L}_{f_n, \phi_n} (\varphi) \|_0\\
& = & \sup_{\left\|\varphi\right\|_\alpha\leq 1}\,  \sup_{x\in M}\sum _{i=1}^{\deg(f)} |e^{\phi(y_i)} \varphi (y_i) - e^{\phi_n(y_{i,n})} \varphi (y_{i,n})|\\
&\leq& \sup_{\left\|\varphi\right\|_\alpha\leq 1}\,  \sup_{x\in M}\!\sum _{i=1}^{\deg(f)}|\varphi (y_i)||e^{\phi(y_i)} - e^{\phi_n(y_{i,n})}|\\
&&+ \sup_{\left\|\varphi\right\|_\alpha\leq 1}\,\sup_{x\in M}\sum _{i=1}^{\deg(f)}|e^{\phi_n(y_{i,n})}||\varphi (y_i) - \varphi (y_{i,n})|.
\end{eqnarray*} 
Since  $(y_{i, n})_n$ converges to $y_i$ and $(\phi_n)_n$ converges uniformly to $\phi$ we have that each term in the last inequality converges uniformly to zero for all $\varphi\in C^{\alpha}(M)$ with $\left\|\varphi\right\|_\alpha\leq 1.$ 
\end{proof}

%Let $(f, \phi)$ belong to the family $\mathcal H_c$ and consider $\mathcal{L}_{f, \phi}$ the transfer operator associated to $(f, \phi).$ We point out that the spectral radius of $\mathcal{L}_{f, \phi}$ acting on $C^1(M)$ is the same as acting on $C^0(M).$  Indeed, since $\mathcal{L}_{f, \phi}$ is a bounded positive operator, it follows that its spectral radius is given by 
%$$\lambda=\lim_{n\to\infty} \sqrt[n]{\parallel \mathcal{L}^{n}_{f, \phi}(\textbf{1}) \parallel}.$$ 
Let now $(f_n, \phi_n)_n$ be a sequence in $\mathcal H_c$ converging to $(f, \phi)\in \mathcal H_c$. For notational simplicity, for each $n\geq 0 $  let $\lambda_n$ be the spectral radius of $\mathcal{L}_{f_n, \phi_n}$ and $\lambda$ be the spectral radius of $\mathcal{L}_{f, \phi}$. In the next result we use in particular  the continuity of   $\Gamma$ to prove the convergence of the spectral radius.

\begin{Lema} 
 The sequence $(\lambda_n)_n$ converges to $\lambda$.
\end{Lema}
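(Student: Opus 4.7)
My plan is to show that every subsequential limit of $(\lambda_n)_n$ equals $\lambda$, exploiting the fact established in Lemma~\ref{conformeunico} that $\lambda$ is the only real eigenvalue of $\mathcal{L}_{f,\phi}^{\ast}$.

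First I would observe that the sequence $(\lambda_n)_n$ is bounded. Indeed, from \eqref{eq.radius} we have $\deg(f_n) e^{\inf \phi_n} \le \lambda_n \le \deg(f_n) e^{\sup \phi_n}$, and since $f_n \to f$ in $C^1$ we have $\deg(f_n) = \deg(f)$ for $n$ large, while $\phi_n \to \phi$ uniformly gives uniform bounds on $\inf \phi_n$ and $\sup \phi_n$. Hence $(\lambda_n)$ lives in a compact subset of $\mathbb{R}$, and it suffices to identify the possible limit points.

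So let $(\lambda_{n_k})_k$ be any convergent subsequence, with limit $\bar\lambda$. For each $k$ there is an eigenmeasure $\nu_{n_k} \in \mathbb{P}(M)$ with $\mathcal{L}_{f_{n_k},\phi_{n_k}}^{\ast}\nu_{n_k} = \lambda_{n_k}\nu_{n_k}$, obtained via Lemma~\ref{Mazur}. By the weak$^\ast$ compactness of $\mathbb{P}(M)$, extracting a further subsequence if necessary, we may assume $\nu_{n_k} \to \nu$ in the weak$^\ast$ topology for some $\nu \in \mathbb{P}(M)$. The heart of the argument is to pass to the limit in the identity
\begin{equation*}
\int \mathcal{L}_{f_{n_k},\phi_{n_k}}\psi \, d\nu_{n_k} = \lambda_{n_k}\int \psi \, d\nu_{n_k}
\end{equation*}
for any fixed $\psi \in C^\alpha(M)$. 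The right-hand side clearly converges to $\bar\lambda \int \psi\, d\nu$. For the left-hand side, I split
\begin{equation*}
\int \mathcal{L}_{f_{n_k},\phi_{n_k}}\psi \, d\nu_{n_k} = \int \bigl(\mathcal{L}_{f_{n_k},\phi_{n_k}}\psi - \mathcal{L}_{f,\phi}\psi\bigr) d\nu_{n_k} + \int \mathcal{L}_{f,\phi}\psi \, d\nu_{n_k}.
\end{equation*}
The first integrand tends to zero in $C^0$-norm by Lemma~\ref{C1 continuity} (which gives operator norm convergence $\mathcal{B}(C^\alpha(M),C^0(M))$), and the $\nu_{n_k}$ are probability measures, so that term vanishes; the second term converges to $\int \mathcal{L}_{f,\phi}\psi \, d\nu$ by weak$^\ast$ convergence, since $\mathcal{L}_{f,\phi}\psi \in C^0(M)$.

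Thus $\int \mathcal{L}_{f,\phi}\psi\, d\nu = \bar\lambda \int \psi \, d\nu$ for every $\psi \in C^\alpha(M)$; by the density of $C^\alpha(M)$ in $C^0(M)$ with respect to the sup norm, this identity extends to every $\psi \in C^0(M)$, so $\mathcal{L}_{f,\phi}^{\ast}\nu = \bar\lambda \nu$. Since $\nu$ is a probability measure (in particular nonzero) and $\bar\lambda \in \mathbb{R}$, Lemma~\ref{conformeunico} forces $\bar\lambda = \lambda$. Every convergent subsequence of the bounded sequence $(\lambda_n)$ having the same limit $\lambda$, we conclude $\lambda_n \to \lambda$. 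The main subtlety I expect is the density argument used to upgrade the eigenvalue identity from $C^\alpha$ to $C^0$ test functions, which is essential to invoke Lemma~\ref{conformeunico}; the rest is a routine combination of boundedness, weak$^\ast$ compactness, and the continuity of $\Gamma$.
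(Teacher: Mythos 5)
Your proof is correct and follows essentially the same approach as the paper: bound $(\lambda_n)_n$ via \eqref{eq.radius}, extract a weak$^\ast$ convergent subsequence of eigenmeasures, pass to the limit in the eigenvalue equation tested against $C^\alpha$ functions, upgrade to $C^0$ by density, and invoke Lemma~\ref{conformeunico}. The one small difference is cosmetic: the paper presents the simultaneous passage to the limit in $(f_n,\phi_n)$ and in $\nu_n$ as a chain of equalities (in particular the step $\nu(\mathcal L_{f_n,\phi_n}\psi) \to \nu_n(\mathcal L_{f_n,\phi_n}\psi)$ is stated without elaboration), whereas you make the required decomposition explicit by splitting $\int \mathcal{L}_{f_n,\phi_n}\psi\,d\nu_n$ into the error term $\int(\mathcal{L}_{f_n,\phi_n}\psi - \mathcal{L}_{f,\phi}\psi)\,d\nu_n$, controlled by the operator convergence of Lemma~\ref{C1 continuity} and the uniform bound $\nu_n(M)=1$, plus the term $\int \mathcal{L}_{f,\phi}\psi\,d\nu_n$, controlled by weak$^\ast$ convergence — which is precisely the justification the paper's chain implicitly requires.
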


\begin{proof}
%Let $T_n$ and $T$ be the restriction of the transfer operators $\mathcal{L}_{f_n,\phi_n}$ and $\mathcal{L}_{f, \phi}$, respectively,  to the space $C^1(M)$. Consider $T_n^{\ast}$ and $T^\ast$ the dual operator of $T_n$ and $T$, respectively.  
%Proceeding   as in the first part of the proof of Proposition~\ref{conforme} it follows that the spectral radius of $T_n$ is an eigenvalue of the dual operator $T_n^{\ast}$, for all $n\geq 0.$ Since the spectral radius of $\mathcal{L}_{f_n, \phi_n}$ acting on $C^1(M)$ is the same as acting on $C^0(M)$, then the spectral radius of $T_n$ coincides with $\lambda_n$. Therefore,  b
By \eqref{eq.radius} we have for all $n\ge 1$
\begin{eqnarray*}\label{cota autovalor}
 \deg(f_n)e^{\inf \phi_n} \leq \lambda_n \leq \deg(f_n)e^{\sup \phi_n}. 
\end{eqnarray*}   
In particular, the convergence of $(f_n, \phi_n)$ to $(f,\phi)$ implies that the sequence   $(\lambda_n)_n$ admits some accumulation point   $\bar{\lambda}\in\mathbb R$
satisfying
 $$ \deg(f)e^{\inf \phi} \leq \bar{\lambda} \leq \deg(f)e^{\sup \phi}.$$ 
Now we are going to prove that $\bar{\lambda}$ coincides with $\lambda.$      
  By Lemma~\ref{conformeunico} it is enough to show that $\bar\lambda$ is an eigenvalue of $\mathcal{L}_{f, \phi}^{\ast}$.
Taking subsequences, if necessary, we may assume that there is $\nu\in \mathbb P(M)$ such that 
 $$\nu_n\stackrel{w*}\longrightarrow\nu\quad\text{and}\quad \lambda_n\longrightarrow\bar \lambda.$$
 %\pause
 Let us show that
 $
 \mathcal{L}_{f, \phi}^{\ast}(\nu)=\bar\lambda\nu.
 $
 %\pause
Since $C^\alpha(M)$ is dense in $C^0(M)$, it is enough to see that 
 $$
 \mathcal{L}_{f, \phi}^{\ast}(\nu)(\psi)=\bar\lambda\nu(\psi),\quad \forall\, \psi\in C^\alpha(M).
 $$
Using the continuity of $\Gamma$ and $\nu$, and the fact that $\nu_n\to \nu$,  we may deduce for each $\psi\in C^\alpha(M)$
 \begin{eqnarray*}
\mathcal{L}_{f, \phi}^{\ast}(\nu)(\psi) 
&=&\nu\left( \mathcal{L}_{f, \phi}(\psi)\right)\\ %\pause
&=&\nu\left(\lim_{n\to\infty} \mathcal{L}_{f_n, \phi_n}(\psi)\right)\\% \quad ( \Gamma\text{ is continuous})\\ %\pause
&=&\lim_{n\to\infty}\nu\left( \mathcal{L}_{f_n, \phi_n}(\psi)\right) \\ %\quad ( \nu\text{ is continuous})\\ %\pause
&=&\lim_{n\to\infty}\nu_n\left( \mathcal{L}_{f_n, \phi_n}(\psi)\right) \\ %\quad ( \nu_n\rightarrow \nu)\\ %\pause
&=&\lim_{n\to\infty} \mathcal{L}_{f_n, \phi_n}^{\ast}(\nu_n)(\psi)\\% \quad (\text{by definition})\\ %\pause
&=& \displaystyle \lim_{n \to +\infty} {\lambda_n} \nu_n(\psi)\\ %\pause
&=&  \bar{\lambda} \nu (\psi). 
\end{eqnarray*}
%\pause
% \end{frame}
% 
%% Let $\nu_n$ be the eigenmeasure of $\mathcal{L}_{f, \phi}^{\ast}$ associada 
%
%\begin{frame}
As  Lemma~\ref{conformeunico} assures that $\lambda$    is the only   eigenvalue of $\mathcal{L}_{f, \phi}^*$ %in the interval $[\deg(f)e^{\inf \phi}, 
%\deg(f)e^{{\sup\phi}}]$, 
we necessarily have $\bar{\lambda} = \lambda$.
\end{proof}

Recalling   that $P_f(\phi)=\log\lambda$, as a consequence of the  previous result we obtain the continuity of the topological pressure on the set~$\mathcal{H}_c$.

\begin{Cor} \label{continuidade da pressao na base}
The  function $$
\begin{array}{cccc}
\mathcal H_c  \ & \longrightarrow  \ & \mathbb{R} \\
    (f,\phi) \ & \longmapsto  \ & P_f(\phi)
\end{array}
$$
is continuous.
\end{Cor}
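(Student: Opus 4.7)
The plan is to derive this as an essentially immediate corollary of the preceding lemma together with Lemma~\ref{conformeunico}, since both establish precisely the two ingredients we need: spectral continuity and the identification of the spectral radius with the exponential of the pressure.

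More concretely, I would use the sequential characterization of continuity. Fix $(f,\phi)\in\mathcal H_c$ and let $(f_n,\phi_n)_n$ be an arbitrary sequence in $\mathcal H_c$ converging to $(f,\phi)$ in $\mathcal F\times C^\alpha(M)$. Write $\lambda_n=\lambda_{f_n,\phi_n}$ and $\lambda=\lambda_{f,\phi}$ for the spectral radii of the associated transfer operators. By Lemma~\ref{conformeunico} we have the identities
\[
P_{f_n}(\phi_n)=\log\lambda_n\quad\text{and}\quad P_f(\phi)=\log\lambda,
\]
so it suffices to show that $\log\lambda_n\to\log\lambda$.

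The convergence $\lambda_n\to\lambda$ is exactly the content of the lemma immediately preceding this corollary. To pass to logarithms I would invoke the uniform bounds from~\eqref{eq.radius}, namely
\[
\deg(f_n)e^{\inf\phi_n}\le\lambda_n\le\deg(f_n)e^{\sup\phi_n},
\]
which, via the convergence $\phi_n\to\phi$ in $C^\alpha(M)$ (hence uniformly) and the local constancy of $\deg$ on $\mathcal F$, ensure that the sequence $(\lambda_n)_n$ is eventually trapped in a compact subinterval of $(0,+\infty)$. On such an interval $\log$ is (uniformly) continuous, so $\log\lambda_n\to\log\lambda$, giving $P_{f_n}(\phi_n)\to P_f(\phi)$ and hence the desired continuity.

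There is no genuine obstacle here beyond the bookkeeping already absorbed into the previous two lemmas; the only point worth flagging is that one must justify positivity of $\lambda$ (so that taking logarithms is legitimate and continuous at $\lambda$), and this is transparent from~\eqref{eq.radius} since $\deg(f)\ge 1$ and $e^{\inf\phi}>0$. Therefore the statement follows at once.
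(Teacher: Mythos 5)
Your proof is correct and follows essentially the same route as the paper: the paper also deduces the corollary by combining $P_f(\phi)=\log\lambda_{f,\phi}$ (Lemma~\ref{conformeunico}) with the convergence $\lambda_n\to\lambda$ from the preceding lemma. The extra care you take with positivity of $\lambda$ and continuity of $\log$ is just the bookkeeping the paper leaves implicit.
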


Now we are able to prove  the equilibrium stability of the family $\mathcal H_c$. 
%Besides the inherent interest in this family, understanding its statistical behavior is the crucial ingredient in the proof of Theorem~\ref{statistical}.
%
Consider as before  $(f_n, \phi_n)_n$  a sequence in $\mathcal H_c$ converging to $(f, \phi)\in \mathcal H_c$. For each $n\in\mathbb{N}$, let $\mu_n$ be an equilibrium state for $(f_n, \phi_n)$. We are going to show that any accumulation point $\mu_0$ of the sequence $(\mu_n)_n$ is an equilibrium state for $(f, \phi).$ We start with the invariance.

\begin{Lema}$\mu_0$ is an $f$-invariant measure.
\end{Lema}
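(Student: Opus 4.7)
The plan is to show $\mu_0$ is $f$-invariant by verifying the duality identity $\int \psi\circ f\,d\mu_0 = \int \psi\,d\mu_0$ for every $\psi\in C^0(M)$, exploiting the $f_n$-invariance of each $\mu_n$ and passing to the limit along the convergent subsequence.

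First, pick a subsequence (still denoted $(\mu_n)_n$ for brevity) converging to $\mu_0$ in the weak$^*$ topology. Fix $\psi\in C^0(M)$. The weak$^*$ convergence gives us immediately $\int \psi\,d\mu_n \to \int \psi\,d\mu_0$, and since $f$ is continuous $\psi\circ f\in C^0(M)$, so also $\int \psi\circ f\,d\mu_n \to \int \psi\circ f\,d\mu_0$.

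The crucial step is to replace $\psi\circ f$ by $\psi\circ f_n$ inside the last integral at the cost of something that vanishes. I would split
$$
\Bigl|\int \psi\circ f_n\,d\mu_n - \int \psi\circ f\,d\mu_0\Bigr|
\le \Bigl|\int (\psi\circ f_n - \psi\circ f)\,d\mu_n\Bigr| + \Bigl|\int \psi\circ f\,d\mu_n - \int \psi\circ f\,d\mu_0\Bigr|.
$$
The second summand tends to $0$ by weak$^*$ convergence, as noted. The first summand is bounded by $\|\psi\circ f_n - \psi\circ f\|_0$, which tends to $0$ because $f_n\to f$ uniformly on the compact manifold $M$ and $\psi$ is uniformly continuous. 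Combining this with the $f_n$-invariance of $\mu_n$, namely $\int \psi\circ f_n\,d\mu_n = \int \psi\,d\mu_n$, we obtain
$$
\int \psi\circ f\,d\mu_0 = \lim_{n\to\infty}\int \psi\circ f_n\,d\mu_n = \lim_{n\to\infty}\int \psi\,d\mu_n = \int \psi\,d\mu_0,
$$
so $\mu_0\in \mathbb P_f(M)$.

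There is no real obstacle here; the only subtlety is handling the fact that the dynamics itself is varying, which is dispatched by the uniform continuity of $\psi$ together with the uniform (indeed $C^1$) convergence $f_n\to f$ assumed on the family $\mathcal F$. The harder work — showing that the $f$-invariant measure $\mu_0$ is actually an equilibrium state for $(f,\phi)$ — is postponed to the subsequent lemmas and rests on the continuity of the topological pressure already established in Corollary~\ref{continuidade da pressao na base} and on upper semi-continuity of the entropy.
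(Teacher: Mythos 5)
Your proposal is correct and takes essentially the same route as the paper: both decompose $\bigl|\int \psi\circ f_n\,d\mu_n - \int \psi\circ f\,d\mu_0\bigr|$ into the two summands $\bigl|\int(\psi\circ f_n - \psi\circ f)\,d\mu_n\bigr|$ and $\bigl|\int \psi\circ f\,d\mu_n - \int \psi\circ f\,d\mu_0\bigr|$, kill the first using uniform convergence $f_n\to f$ together with uniform continuity of $\psi$, and kill the second by weak$^*$ convergence. You simply spell out a bit more explicitly what the paper compresses into "combining the convergence of $f_n$ to $f$ and the fact that $\mu_0$ is an accumulation point."
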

 \begin{proof} Since each $\mu_n$ is $f_n$-invariant, for any continuous function $\varphi:M\rightarrow\mathbb{R}$
 we have
$$\int \varphi\circ f_{n}\,d\mu_n=\int \varphi \,d\mu_n \longrightarrow \int \varphi\,d\mu_0 \quad \mbox{when} \quad n\to +\infty.$$
Hence to verify the $f$-invariance of $\mu_0$ it suffices to prove that 
$$\int \varphi\circ f_{n}\,d\mu_n \longrightarrow \int \varphi\circ f\,d\mu_0 \quad \mbox{when} \quad n\to +\infty.$$
For each $n\in\mathbb{N}$ we may write the inequality
\begin{eqnarray*}
\left|\int{\varphi\circ f_n}\,d\mu_n-\int{\varphi\circ f}\,d\mu_0\right|&\leq&\left|\int{\varphi \circ f_n}\,d\mu_n-\int{\varphi\circ f}\,d\mu_n\right|\\
&+&\left|\int{\varphi\circ f}\,d\mu_n-\int{\varphi\circ f}\,d\mu_0\right|.
\end{eqnarray*}
Combining the convergence of $f_n$ to $f$ and the fact that $\mu_0$ is an accumulation point of the sequence $(\mu_n)_n$,  we have that each term in the sum above is close to zero for $n$ sufficiently large. This implies that $\mu_0$ is $f$-invariant.
\end{proof}

Now take $\delta_1>0$ as Lemma~\ref{distortion}. Consider $\mathcal{P}$ a finite partition of $M$ with diameter smaller than $\delta_{1}/2$ with $\mu_0(\partial\mathcal P)=0$. For all $n\ge0$ and all $m\geq 1$ define the partition $\mathcal{P}_{m}^n$ by
$$\mathcal{P}_{m}^{n}=\left\{P^n_{m}=P_{i_{0}}\cap\cdots\cap f_n^{-(m-1)}(P_{i_{m-1}})\,;\, P_{i_{j}}\in \mathcal{P}, 0\leq j\leq m-1\right\}.$$
Given $x\in M$, define also 
${P}_{m}^n(x)$ as the element $P_{m}^n\in \mathcal P^n_m$ such that $x\in P_m^n$. Note that by definition the sequence $\{P^{n}_{m}(x)\}_{m\geq 1}$ is non-increasing in $m$, meaning that
$$P^{n}_{m+1}(x)\subset P^{n}_{m}(x),\quad\text{for all $m\geq 1$.}$$

\begin{Lema}\label{le.diam}
For all  $x\in \Sigma_c(f_n)$ and all $n\ge 0$ the diameter of $P^{n}_{m}(x)$ goes to zero when $m$ goes to infinity. 
\end{Lema}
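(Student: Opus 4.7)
The plan is to enclose each cylinder $P_m^n(x)$ inside a dynamic ball of $f_n$ and then exploit the backward contraction at a hyperbolic time for $x$ to make that dynamic ball arbitrarily small.

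The first observation is that the diameter constraint $\diam(\mathcal P) < \delta_1/2$ forces
$$P_m^n(x) \subset B_{\delta_1/2}(x, m-1),$$
the dynamic ball being taken with respect to $f_n$. Indeed, any $y$ in the cylinder sits in the same element of $\mathcal P$ as $x$ after each of the iterates $0, 1, \dots, m-1$, and each such element has diameter below $\delta_1/2$. To use a single $\delta_1$ for every $f_n$, I would invoke the remark following Lemma~\ref{distortion}: $\delta_1$ can be fixed uniformly over a $C^1$-neighbourhood of $f$, so it works for all but finitely many $f_n$, and the exceptional indices can be absorbed by shrinking $\delta_1$ further.

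Next, since $x \in \Sigma_c(f_n)$, Lemma~\ref{le.infinite} supplies infinitely many hyperbolic times for $x$ under $f_n$. Fixing any such hyperbolic time $k \le m-1$ and applying Lemma~\ref{distortion}(i) with $\varepsilon = \delta_1/2$ and $j = k$, I get, for all $y, z \in B_{\delta_1/2}(x,k)$,
$$d(y,z) \le e^{-ck/4}\, d(f_n^k(y), f_n^k(z)) \le \delta_1\, e^{-ck/4}.$$
Since $B_{\delta_1/2}(x, m-1) \subset B_{\delta_1/2}(x, k)$ whenever $k \le m-1$, this yields $\diam(P_m^n(x)) \le \delta_1\, e^{-ck/4}$.

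To conclude, I would let $k = k(m)$ be the largest hyperbolic time for $x$ with $k \le m-1$; as the set of hyperbolic times is unbounded, $k(m) \to \infty$ when $m \to \infty$, and the previous bound forces $\diam(P_m^n(x)) \to 0$. The only mild obstacle in the argument is the uniform choice of $\delta_1$ across the sequence $(f_n)$, which is exactly what the remark following Lemma~\ref{distortion} guarantees; everything else reduces to the standard backward-contraction estimate at hyperbolic times.
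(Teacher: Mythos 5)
Your proof is correct and follows essentially the same route as the paper: enclose the cylinder in a dynamic ball, apply the backward-contraction estimate of Lemma~\ref{distortion}(i) at a hyperbolic time, and use the abundance of hyperbolic times together with the monotonicity of $m\mapsto P^n_m(x)$. Your version is in fact a bit more careful than the paper's, which states the bound $\diam(P^n_m(x))\le e^{-cm}\delta_1$ only at hyperbolic times $m$ (and with the exponent arguably a typo for $e^{-cm/4}$) and leaves the passage to general $m$ and to the limit implicit; you make both steps explicit by taking $k(m)$ to be the largest hyperbolic time $\le m-1$.
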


\begin{proof}
It follows from Lemma~\ref{distortion} that if $m=m(x, n)$ is a hyperbolic time for $x\in \Sigma_c(f_n)$, then   $\diam( P^{n}_{m}(x))\leq e^{-cm}\delta_1$. Since every point $x\in \Sigma_c(f_n)$ has infinitely many hyperbolic times we conclude that  $\diam(P^{n}_{m}(x))\rightarrow 0$ when $m$ goes to infinity for every $x\in \Sigma_c(f_n)$ and all $n\ge 0.$
\end{proof}

Now we are ready to finish  the proof of Theorem~\ref{statistical para f}. By \cite[Lemma~4.3]{RV16} we have $\mu_n(\Sigma_c(f_n))=1$ for all $n\ge 1$. Then 
Lemma~\ref{le.diam} gives that $\mathcal P$ is a generating partition for all $(f_n,\mu_n)$. 
As  a straightforward application of \cite[Theorem 11]{Ar} and Kolmogorov-Sinai Theorem we have 
\begin{equation}\label{eq.limsup}
\limsup_{n\to\infty} h_{\mu_n}(f_n)\le h_{\mu_0}(f).
\end{equation}
%Hence, given any $\varepsilon>0$, there are $N\in\mathbb N$ such that for all $n\ge N$ we have
%\begin{equation}\label{eq.limsup}
%h_{\mu_n}(f_n)\le h_{\mu_0}(f)+\varepsilon.
%\end{equation}
%
%\begin{Prop}
%$\mu_0$ is an equilibrium state for $(f,\phi)$.
%\end{Prop}
%\begin{proof} Since the equilibrium state $\mu_n$ for $(f_n, \phi_n)$ gives full measure to the non-uniformly expanding set $\Sigma_c(f_n)$, it follows from Lemma~\ref{le.diam} that $\mathcal{P}$ is a generating partition for $\mu_n$, for all $n\geq 0$. Applying Kolmogorov-Sinai Theorem we have 
%$$
%h_{\mu_n}(f_n)\!= \!h_{\mu_n}(f_n, \mathcal{P})\!=\!\inf_{m\geq 1}\frac{1}{m}H_{\mu_n}(\mathcal{P}_{m}^{n})= \inf_{m\geq 1}\frac{1}{m}\sum_{P^n_m\in\mathcal{P}_{m}^{n}}\mu_n(P^n_m)\log\mu_n(P^n_m).
%$$
%On the other hand, given $\varepsilon>0$ small enough, since $\mu_0$ is an accumulation point of the sequence $(\mu_n)_n$ and it satisfies  $\mu_0(\partial\mathcal{P})=0$, there exists $k\in\mathbb{N}$ such that \marginpar{Cuidado, os conjuntos mudam!}
%$$h_{\mu_n}(f_n)=h_{\mu_n}(f_n, \mathcal{P})\leq \frac{1}{k} H_{\mu_n}(\mathcal{P}_{k}^{n})\leq \frac{1}{k}H_{\mu_0}(\mathcal{P}_{k}) + \frac{\varepsilon}{2} \leq h_{\mu_0}(f)+\varepsilon. $$
Using    the continuity of the topological pressure (Corollary~\ref{continuidade da pressao na base}) and the fact that $\mu_n$ is an equilibrium state for $(f_n, \phi_n)$,  %we obtain
%, from the inequality above and \eqref{pressure2} we deduce that
%$$P_{f_n}(\phi_n)=h_{\mu_n}(f_n)+\int \phi_n\, d\mu_n  \leq  h_{\mu_0}(f)+\int \phi_n\, d\mu_n + \varepsilon.$$
%Taking limit in $n$ in the previous inequality 
 we obtain 
$$P_f(\phi)=\lim_{n\to +\infty}\! P_{f_n}(\phi_n)=\lim_{n\to +\infty}\! \left(h_{\mu_n}(f_n)+\!\int\! \phi_n\, d\mu_n \right)\!\leq h_{\mu_0}(f)+\int\! \phi\, d\mu_0 .$$
This clearly implies that $\mu_0$ is an equilibrium state for $(f,\phi)$.
%Since $\varepsilon>0$ is arbitrary,  $\mu_0$ is an equilibrium state for $(f, \phi)$. %\end{proof}
In the last inequality we have used that   $\mu_n$ converges to $\mu$ in the weak* topology and converges $\phi_n$ to $\phi$ in the $C^0$ norm.

%%%%%%%%%%%%%%%%%%%%%%%%%%%%%%%%%%%%%%%%%%%%%%%%%%%%%%%%%%%%%%%%%%%%%%%%%%%%%%%%%%%%%%%%%%%%%%%%%%                %%%%%%%%%%%%%%%%%%%%%%%%%%%%%%%%%%%%%%%%%%%%%%%%%%%%%%%%%%%%%%%%%%%%%%%%%%%%%%%%%%%%%%%%%%%%%%%%%%%%%%%%%%%%%%%%%%%%%%%%%%%%%%%%%%%%                              Skew products
%%%%%%%%%%%%%%%%%%%%%%%%%%%%%%%%%%%%%%%%%%%%%%%%%%%%%%%%%%%%%%%%%%%%%%%%%%%%%%%%%%%%%%%%%%%%%%%%%%%%%%%%%%%%%%%%%%%%%%%%%%%%%%%%%%%%%%%%%%%

\section{Skew products}\label{est eq}
In this section we prove the existence of finitely many ergodic  equilibrium states for skew products with respect  to hyperbolic H\"older continuous potentials and its stability, namely Theorem~\ref{unicidade} and Theorem~\ref{statistical}.
We point out that Theorem~\ref{unicidade} enlarge the class of potentials considered  in \cite{RV16}, where it is also required that the potential does not depend on the stable direction. Here we prove that this condition is not necessary.

First we extend the definition of hyperbolic potentials for skew products. 
Let $F:M\times N \to M\times N$ be the skew product defined in Section~\ref{Results}.  Recall that a continuous function $\phi:M\times N\rightarrow \mathbb{R}$ is a $c$-{\em hyperbolic potential} for $F$~if %the topological pressure of the system $(F,\phi)$ satisfies 
$$P_F(\phi, \left(\Sigma_c(f)\right)^c\times N)<P_F(\phi,\Sigma_c(f)\times N)=P_F(\phi),$$
where $\Sigma_c(f)$ as in Subsection~\ref{se.NUE}.
In the next result we  show that for every H\"older continuous potential which is hyperbolic for $F$ its possible to construct a potential homologous to it which does not depend on the stable direction and remains hyperbolic for $F$. 
We say that two potentials  $\bar{\phi},\phi: M\times N\to \mathbb R$ are  \emph{homologous} if there is a continuous function $u:M \times N \to\mathbb{R}$ such that $\bar{\phi}=\phi-u+u\circ F$;

\begin{Prop}\label{homologo} Let $\phi: M\times N \to \mathbb{R}$ be a H\"older continuous potential. There exists a H\"older continuous potential $\bar{\phi}: M\times N \to \mathbb{R}$  not depending~on the stable direction such that:
\begin{enumerate}
	\item $\bar{\phi}$ is homologous to $\phi$;
	%	\item $\bar{\phi}$ is homologous to $\phi$: there is a continuous function $u:M \times N \to\mathbb{R}$ such that $\bar{\phi}=\phi-u+u\circ F$;
	\item if $\phi$ is   $c$-hyperbolic, then $\bar{\phi}$ is   $c$-hyperbolic;
	\item $P_F(\bar\phi)=P_F(\phi)$;
	\item $(F,\phi)$ and $(F,\bar\phi)$ have the same equilibrium states.
\end{enumerate}
\end{Prop}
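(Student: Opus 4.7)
The construction is the classical telescoping trick adapted to skew products with uniformly contracting fibers. Recall that $y_0\in N$ satisfies $g(x,y_0)=y_0$ for every $x\in M$, so $F^n(x,y_0)=(f^n(x),y_0)$. Define
\[
u(x,y)=\sum_{n=0}^{\infty}\bigl[\phi(F^n(x,y))-\phi(F^n(x,y_0))\bigr]
\]
and set $\bar\phi:=\phi+u\circ F-u$. First I would check absolute convergence: since $F^n(x,y)$ and $F^n(x,y_0)$ share the base coordinate $f^n(x)$, assumption \eqref{g} iterated gives
$d_N(g^n(x,y),y_0)\le\lambda^n d_N(y,y_0)$, so each summand is bounded by $|\phi|_\alpha\lambda^{n\alpha}d_N(y,y_0)^\alpha$, yielding a geometric estimate.

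Next I would compute $\bar\phi$ explicitly. Using $F^n(F(x,y_0))=F^{n+1}(x,y_0)$, a telescoping rearrangement gives $u(F(x,y))-u(x,y)=-\phi(x,y)+\phi(x,y_0)$, hence
\[
\bar\phi(x,y)=\phi(x,y_0),
\]
which is manifestly independent of the stable coordinate. So item (i) is immediate from the definition of $\bar\phi$, and the $y$-independence is automatic once I show $\bar\phi$ is well-defined and H\"older continuous (Hölder continuity of $\bar\phi$ as a function of $x$ follows from that of $\phi$ on $M\times\{y_0\}$).

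For (iii) and (iv), the coboundary identity $\bar\phi-\phi=u\circ F-u$ gives $S_n\bar\phi-S_n\phi=u\circ F^n-u$, which is uniformly bounded by $2\|u\|_0$. This immediately yields $\int\bar\phi\,d\mu=\int\phi\,d\mu$ for every $F$-invariant $\mu$, whence $(F,\phi)$ and $(F,\bar\phi)$ have the same variational problem, the same topological pressure, and the same equilibrium states. For (ii), the same uniform bound $|S_n\bar\phi(z)-S_n\phi(z)|\le 2\|u\|_0$ shows $|R_{n,\delta}\bar\phi(x)-R_{n,\delta}\phi(x)|\le 2\|u\|_0$, so the defining sums in the relative pressure on $\Sigma_c(f)\times N$ and on its complement change only by a bounded multiplicative factor. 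Therefore $P_F(\bar\phi,\Lambda)=P_F(\phi,\Lambda)$ for both $\Lambda=\Sigma_c(f)\times N$ and $\Lambda=(\Sigma_c(f))^c\times N$, and $c$-hyperbolicity transfers from $\phi$ to $\bar\phi$.

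The main technical obstacle is to verify that $u$, and hence $\bar\phi$, is genuinely H\"older continuous rather than merely continuous. Fiberwise H\"older regularity is easy, as the geometric estimate above shows $|u(x,y_1)-u(x,y_2)|\le |\phi|_\alpha(1-\lambda^\alpha)^{-1}d_N(y_1,y_2)^\alpha$. For the base direction, one expects the iterates $f^n$ to expand distances, so I would use the standard dyadic splitting: given $x_1,x_2$ with small $d(x_1,x_2)$, I would split the sum defining $u(x_1,y)-u(x_2,y)$ at an index $n(x_1,x_2)\sim -\log d(x_1,x_2)$, bounding the tail by the geometric estimate applied to $d_N(g^n(x_1,y),g^n(x_2,y))\le 2\operatorname{diam}(N)$ (or a more refined estimate exploiting continuity in the base), and the head by the H\"older norm of $\phi$ together with the Lipschitz behaviour of the local branches of $f^n$. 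Balancing the two gives a H\"older estimate of $u$ in $x$ with a possibly smaller exponent, which is sufficient for all subsequent arguments.
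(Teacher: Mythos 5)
Your construction and the key identity $\bar\phi(x,y)=\phi(x,y_0)$ are exactly those of the paper, and items (i), (iii), (iv) are handled in the same way (same integral against every $F$-invariant measure). Your explicit coboundary bound $|R_{n,\delta}\bar\phi-R_{n,\delta}\phi|\le 2\|u\|_0$, yielding equality of all the relative pressures, is a clean and correct justification of item (ii); the paper asserts this step without spelling it out.

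The final paragraph, however, is a detour and reflects a misconception you should clear up: H\"older regularity of $\bar\phi$ does \emph{not} need to be inferred from H\"older regularity of $u$. Once you have the identity $\bar\phi(x,y)=\phi(x,y_0)$ (which your regrouping using $F^{n}(f(x),y_0)=F^{n+1}(x,y_0)$ does establish, the absolute convergence licensing the rearrangement), $\bar\phi$ is just the composition of $\phi$ with the Lipschitz map $(x,y)\mapsto(x,y_0)$, so it is $\alpha$-H\"older with $|\bar\phi|_\alpha\le|\phi|_\alpha$ --- no loss of exponent and no dyadic splitting required. Moreover, the homology relation only asks $u$ to be continuous, which your geometric-series estimate already gives; the stronger claim that $u$ is H\"older in the base direction is simply not needed anywhere in the proposition, and your proposed proof of it would produce a degraded exponent for no benefit. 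Delete that paragraph and the argument is complete.
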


\begin{proof} Let $y_0\in N$ be the fixed point of the fiber dynamics. Define a function $u: M\times N\rightarrow\mathbb{R}$ by $$u(x, y)=\sum_{j=0}^{\infty}(\phi\circ F^{j}(x, y)-\phi\circ F^{j}( x,y_0)).$$
Since $(x, y)$ and $(x, y_0)$ are in the same stable direction, for every $(x, y)\in M\times N $ and all $j>0$ we have  
$$ d(F^{j}(x, y), F^{j}(x, y_0)) \leq \lambda^j,$$
where $\lambda$ is the contraction rate of the fiber dynamics (see Section~\ref{Results}).
Using  the last inequality and the $\theta$-H\"older continuity of $\phi$ we obtain
\begin{eqnarray*}
u(x, y)&=&\sum_{j=0}^{\infty}\phi\circ F^{j}(x, y)-\phi\circ F^{j}( x,y_0)\\
&\leq& C \sum_{j=0}^{\infty} d(F^{j}(x, y), F^{j}(x, y_0))^\theta\\
& \leq& C \sum_{j=0}^{\infty} \lambda^{\theta j}.
\end{eqnarray*}
Thus it follows that $u$ is well defined and it is a continuous function. 
 
Defining the potential $\bar{\phi}:M\times N \rightarrow\mathbb{R}$ by $\bar{\phi}:=\phi-u+u\circ F$ we have that $\bar{\phi}$ is a continuous function homologous to $\phi$. Moreover, we can write
\begin{eqnarray*}
\bar{\phi}(x, y)&=&\phi(x, y)-u(x, y)+u\circ F(x, y)\\
&=&\phi(x, y)-\sum_{j=0}^{\infty}\phi\circ F^j(x, y)-\phi\circ F^j(x, y_0)\\
&&+\sum_{j=0}^{\infty}\phi\circ F^{j+1}(x,y)-\phi\circ F^{j}(f(x), y_0)\\
&=&\phi(x, y_0)+\sum_{j=0}^{\infty}\phi\circ F^{j+1}(x, y_0)-\phi\circ F^{j}(f(x), y_0)
\end{eqnarray*}
which implies that $\bar{\phi}$ does not depend on the stable direction.

Since we are assuming that $g(x,y_0)=y_0$ for every $x\in M$, from the last equality we obtain  
\begin{align}
\bar{\phi}(x, y)&=\phi(x, y_0)+\sum_{j=0}^{\infty}\phi\circ F^{j+1}(x, y_0)-\phi\circ F^{j}(f(x), y_0)\nonumber\\
&=\phi(x, y_0)+\sum_{j=0}^{\infty}\phi\circ F^{j}(f(x), g_x(y_0))-\phi\circ F^{j}(f(x), y_0)\nonumber\\
&=\phi(x, y_0).\label{eq.barraphi}
\end{align}
This gives in particular that $\bar{\phi}$ is H\"older continuous and it is a $c$-hyperbolic potential if $\phi$ is $c$-hyperbolic.

The third item is a consequence of \eqref{pressure2} and the fact that $\phi$ and $\bar\phi$ have the same integral with respect to any $F$-invariant probability measure. The fourth item is a consequence of the third one.
\end{proof}

Let $\phi: M\times N \to \mathbb{R}$ be a $c$-hyperbolic H\"older continuous potential  for~$F$. 
According to Proposition~\ref{homologo} there exists a H\"older continuous potential $\bar{\phi}: M\times N \to \mathbb{R}$ that is homologous to $\phi$ and not depending on the stable direction. Fixing any point $z\in N$, the potential $\bar{\phi}$ induces a 
H\"older continuous potential  $\varphi:M\rightarrow\mathbb{R}$ given by 
\begin{equation}\varphi(x)=\bar{\phi}(x, z).\label{eq.fizinho}
\end{equation} 
This means that considering $\pi:M\times N\rightarrow M$ the natural projection onto $M$ defined by $\pi(x, y)=x$ for every $(x,y)\in M\times N$, we have 
 \begin{equation}\label{eq.phis}
 \bar \phi =\varphi\circ\pi.
 \end{equation}
Moreover,  $\pi$ is a continuous semiconjugacy between $F$ and $f$. Using~\eqref{pressure2} it is straightforward to check  that
\begin{equation}\label{eq.pressoes}
P_{f}(\varphi)\leq P_{F}(\bar{\phi}).
\end{equation}
Now we are going to prove that there exists a bijection between the sets of equilibrium states for $(F,  {\phi})$ and for $(f, \varphi)$. For this we will use the following result due to Ledrappier and Walters, whose proof can be found in~\cite{Led}.

\begin{Teo}[Ledrappier-Walters Formula] 
Let $\tilde{X}$, $X$ be compact metric spaces and let $\tilde {T}:\tilde{X}\rightarrow \tilde{X}$, $T:X\rightarrow X$, $\tilde{\pi}:\tilde{X}\rightarrow X$ be continuous maps such that $\tilde{\pi}$ is surjective and $\tilde{\pi}\circ\tilde{T}=T\circ\tilde{\pi}$. Then
$$\sup_{\tilde\mu;\tilde{\pi}_{\ast}\tilde{\mu}=\mu}h_{\tilde{\mu}}(\tilde T)=h_{\mu}(T)+\int{h_{top}(\tilde{T}, \tilde{\pi}^{-1}(y))\,d\mu(y)}.$$
\end{Teo}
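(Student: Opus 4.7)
The plan is to establish two inequalities. For the upper bound $\le$, I would start from the relative entropy decomposition
$$h_{\tilde\mu}(\tilde T) = h_\mu(T) + h_{\tilde\mu}\bigl(\tilde T \mid \tilde\pi^{-1}(\mathcal B_X)\bigr),$$
valid for any $\tilde T$-invariant $\tilde\mu$ with $\tilde\pi_*\tilde\mu = \mu$, where $\mathcal B_X$ is the Borel $\sigma$-algebra of $X$. This is a standard consequence of the Abramov--Rokhlin formula for factor maps applied to iterated refinements, which I would recall from the identity $H_{\tilde\mu}(\mathcal P \vee \tilde\pi^{-1}(\mathcal R)) = H_{\tilde\mu}(\mathcal P \mid \tilde\pi^{-1}(\mathcal R)) + H_\mu(\mathcal R)$. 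The task then reduces to dominating the conditional entropy by the integrated fiberwise topological entropy. Fixing a finite partition $\mathcal Q$ of $\tilde X$ with $\diam(\mathcal Q) < \epsilon$, two atoms of $\mathcal Q^{(n)} = \bigvee_{i=0}^{n-1}\tilde T^{-i}\mathcal Q$ that meet a common fiber $\tilde\pi^{-1}(y)$ within a single $(n,\epsilon)$-Bowen ball must coincide. Hence the number of $\mathcal Q^{(n)}$-atoms intersecting $\tilde\pi^{-1}(y)$ is at most the $(n,\epsilon)$-spanning cardinality $s_n(\epsilon, \tilde\pi^{-1}(y))$; dividing by $n$ and sending $n \to \infty$ and then $\epsilon \to 0$ gives a pointwise bound for $\mu$-a.e. $y$, and integration via Fatou delivers the upper bound.

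For the reverse inequality $\ge$, which is the main difficulty, the strategy is to construct for each $\epsilon>0$ a lift $\tilde\mu$ of $\mu$ whose entropy nearly attains the right-hand side. For every $y$ and every $n$, choose a maximal $(n,\epsilon)$-separated subset $E_n(y)\subset\tilde\pi^{-1}(y)$; by Bowen's definition, $|E_n(y)|\ge \exp\bigl(n(h_{top}(\tilde T,\tilde\pi^{-1}(y))-\epsilon)\bigr)$ for infinitely many $n$. A Kuratowski--Ryll-Nardzewski-type measurable selection argument allows $y\mapsto E_n(y)$ to be chosen Borel-measurably. Form the symmetrized empirical measures
$$\sigma^y_n = \frac{1}{|E_n(y)|}\sum_{z\in E_n(y)} \frac{1}{n}\sum_{i=0}^{n-1}\delta_{\tilde T^i(z)},$$
set $\tilde\mu_n = \int \sigma^y_n\, d\mu(y)$, and take a weak-$\ast$ accumulation point $\tilde\mu$. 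Standard checks show that $\tilde\mu$ is $\tilde T$-invariant and that $\tilde\pi_*\tilde\mu = \mu$. The entropy lower bound then follows from a Misiurewicz-type argument: for a partition $\mathcal Q$ with $\tilde\mu(\partial\mathcal Q)=0$ and $\diam(\mathcal Q)$ small compared to $\epsilon$, the separation property yields $H_{\sigma^y_n}(\mathcal Q^{(n)}) \ge \log|E_n(y)|$, and combining this with the relative entropy formula recovers
$$h_{\tilde\mu}(\tilde T) \ge h_\mu(T) + \int h_{top}(\tilde T, \tilde\pi^{-1}(y))\,d\mu(y) - O(\epsilon)$$
after letting $\epsilon \to 0$.

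The main obstacle is the lower bound, and within it two technical points stand out. The first is measurability: one needs $y\mapsto h_{top}(\tilde T,\tilde\pi^{-1}(y))$ to be $\mu$-measurable and the selection $y\mapsto E_n(y)$ to be Borel, which I would handle using the fact that $y\mapsto \tilde\pi^{-1}(y)$ is an upper-semicontinuous compact-set-valued map and that Bowen entropy is Borel on the hyperspace of compacta. The second is controlling the entropy along the weak-$\ast$ limit: the measures $\sigma^y_n$ are not $\tilde T$-invariant and the atomic structure could collapse in the limit, so one must carry through the combinatorial entropy estimate at the level of the finite approximations $\tilde\mu_n$ and then use concavity and upper semicontinuity of $\tilde\nu \mapsto \tfrac{1}{n}H_{\tilde\nu}(\mathcal Q^{(n)})$ at boundary-free partitions before passing to the limit.
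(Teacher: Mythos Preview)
The paper does not supply a proof of this theorem; it is quoted as a known result and the reader is referred to the original article of Ledrappier and Walters \cite{Led}. There is therefore no in-paper argument to compare your proposal against.

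That said, your outline is essentially the strategy of the original proof in \cite{Led}: the upper bound via a relative (conditional) entropy decomposition controlled by fiberwise spanning numbers, and the lower bound by building a lift from fiberwise $(n,\epsilon)$-separated sets, averaging against $\mu$, and passing to a weak-$\ast$ limit with a Misiurewicz-type entropy estimate. The two technical issues you single out---measurable dependence of $y\mapsto E_n(y)$ and of the fiber entropy, and control of entropy along the limit---are precisely the points that require care in \cite{Led}. One small imprecision in your upper-bound step: the claim that two atoms of $\mathcal Q^{(n)}$ meeting the fiber inside a single $(n,\epsilon)$-Bowen ball must coincide is not literally correct (distinct atoms of a partition of mesh $<\epsilon$ can easily sit inside a common $\epsilon$-ball). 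The clean route bounds the conditional entropy by $\int \log s_n(\epsilon,\tilde\pi^{-1}(y))\,d\mu(y)$ through an open-cover/Lebesgue-number comparison rather than a direct atom count; this is a routine adjustment and does not affect the overall scheme.
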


\begin{Lema}\label{bijection} If $\mu\in\mathbb{P}_f(M)$ is  ergodic, then there exists a unique ergodic measure $\tilde{\mu}\in\mathbb{P}_F(M\times N)$ such that $\mu=\pi_{*}\tilde{\mu}.$ Moreover, $\mu$ is an equilibrium state for  $(f, \varphi)$ if and only if $\tilde{\mu}$ is an equilibrium state for $(F,  {\phi}).$
\end{Lema}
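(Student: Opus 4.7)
The plan splits into three pieces: existence of an $F$-invariant lift of $\mu$, uniqueness of such a lift (the key step, where the uniform fiber contraction is used), and the transfer of the equilibrium property through the semi-conjugacy $\pi$ via the Ledrappier--Walters formula.

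For existence I would fix any basepoint $y_{0}\in N$ and take a weak-$\ast$ accumulation point of the Ces\`aro averages $\tfrac{1}{n}\sum_{k=0}^{n-1}F^{k}_{\ast}(\mu\times\delta_{y_{0}})$. Any such limit is $F$-invariant by the standard argument and projects under $\pi$ to $\mu$, since $\pi\circ F=f\circ\pi$ and $f_{\ast}\mu=\mu$.

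The heart of the argument is uniqueness. Given two $F$-invariant probability measures $\tilde{\mu}_{1},\tilde{\mu}_{2}$ that both project to $\mu$, I will show $\int\psi\,d\tilde{\mu}_{1}=\int\psi\,d\tilde{\mu}_{2}$ for every continuous $\psi\colon M\times N\to\mathbb{R}$. By $F$-invariance, $\int\psi\,d\tilde{\mu}_{i}=\int\psi\circ F^{n}\,d\tilde{\mu}_{i}$. Iterating \eqref{g} yields $d_{N}(g_{n}(x,y),g_{n}(x,y'))\leq\lambda^{n}d_{N}(y,y')$ for all $x\in M$ and $y,y'\in N$, so for $n$ large the function $\psi\circ F^{n}(x,y)$ is uniformly $\varepsilon$-close to $h_{n}(x):=\psi\bigl(F^{n}(x,y_{0})\bigr)$ by uniform continuity of $\psi$ on the compact space $M\times N$. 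Both integrals are therefore within $\varepsilon$ of the common value $\int h_{n}\,d\mu$, and letting $\varepsilon\downarrow 0$ forces $\tilde{\mu}_{1}=\tilde{\mu}_{2}$. Ergodicity of the lift then follows automatically: any decomposition of $\tilde{\mu}$ into $F$-invariant probabilities induces a decomposition of the ergodic $\mu$, so every component projects to $\mu$ and, by uniqueness, coincides with $\tilde{\mu}$.

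For the equilibrium correspondence, the uniform contraction on fibers makes the Bowen topological entropy $h_{\mathrm{top}}(F,\pi^{-1}(x))$ vanish for every $x$, because once $\lambda^{n}\operatorname{diam}(N)<\delta$ a single $(n,\delta)$-dynamical ball covers the whole fiber $\{x\}\times N$. The Ledrappier--Walters formula then gives $\sup_{\pi_{\ast}\tilde{\nu}=\nu}h_{\tilde{\nu}}(F)=h_{\nu}(f)$ for every $f$-invariant $\nu$, and since uniqueness of the lift makes this supremum into a trivial maximum, the unique lift $\tilde{\nu}$ of $\nu$ satisfies $h_{\tilde{\nu}}(F)=h_{\nu}(f)$. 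Combined with $\int\phi\,d\tilde{\nu}=\int\bar{\phi}\,d\tilde{\nu}=\int\varphi\,d\nu$---the first equality by cohomology from Proposition~\ref{homologo}(i), the second from \eqref{eq.phis}---this shows that the bijection $\nu\leftrightarrow\tilde{\nu}$ preserves free energy. Applying \eqref{pressure2} upgrades \eqref{eq.pressoes} to the equality $P_{F}(\phi)=P_{f}(\varphi)$, and the characterization of equilibrium states as measures attaining the pressure yields the desired equivalence. The step I expect to be the main obstacle is precisely the uniqueness of the lift: it is where the hypotheses on $g$ (uniform contraction and a common fixed fiber) are genuinely used, and everything else reduces to assembling ingredients already prepared earlier in the section.
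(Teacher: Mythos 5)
Your proof is correct and the equilibrium--correspondence step follows the paper's route, but your uniqueness argument is genuinely different from the paper's and, in my view, cleaner. The paper proves uniqueness by showing that two distinct ergodic lifts would have ergodic basins of the product form $A_1\times N$ and $A_2\times N$ (by the fiber contraction), which must be disjoint, yet whose projections $A_1,A_2$ both carry full $\mu$-measure by ergodicity of $\mu$ --- a contradiction. Your argument instead estimates $\int\psi\,d\tilde\mu_i=\int\psi\circ F^n\,d\tilde\mu_i\approx\int\psi(F^n(\cdot,y_0))\,d\mu$ directly, using $F$-invariance, fiber contraction, and uniform continuity of $\psi$. This only needs $F$-invariance of the $\tilde\mu_i$, not ergodicity, so you actually prove a stronger statement: the lift is unique among \emph{all} $F$-invariant probabilities projecting to $\mu$, and ergodicity of the lift then comes for free. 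Your existence step (Ces\`aro averages of $\mu\times\delta_{y_0}$) is also slightly more direct than the paper's Hahn--Banach construction, but is essentially the same idea.

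One small inaccuracy: in justifying $h_{\top}(F,\pi^{-1}(x))=0$ you claim that once $\lambda^n\operatorname{diam}(N)<\delta$ a \emph{single} $(n,\delta)$-dynamical ball covers the fiber $\{x\}\times N$. That is not quite right, since the $(n,\delta)$-ball also imposes $\delta$-closeness at time $0$, so it cannot swallow the whole fiber if $\operatorname{diam}(N)>\delta$. What is true --- and what you actually need --- is that any $\delta$-ball $B\subset N$ gives $F^j(\{x\}\times B)\subset\{f^j(x)\}\times B'$ with $\operatorname{diam}(B')\le\lambda^j\delta<\delta$, so $\{x\}\times B$ sits inside a single $(n,\delta)$-dynamical ball for every $n$; hence a fixed finite number of $(n,\delta)$-balls, independent of $n$, covers the fiber, and the fiber entropy is zero. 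This is a one-line fix and does not affect the validity of the rest of the argument.
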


\begin{proof} Given $\mu\in\mathbb{P}_f(M)$ the existence of $\tilde{\mu}\in\mathbb{P}_F(M\times N)$ such that $\mu=\pi_{*}\tilde{\mu}$ is a classical result that we sketch as follows. Consider the functional
$$L(h\circ\pi)= \int h\, d\mu,$$
defined on the closed subspace of observables in $C^{0}(M\times N)$ which are constant on   fibers of the skew-product. By the Hahn-Banach Theorem we can can extend $L$ to $C^{0}(M\times N)$. Since $L$ is positive and $L(1)=1$ it can be identified with a probability measure $\nu\in\mathbb{P}(M\times N)$. Consider   an accumulation point
$$\tilde{\mu}=\lim_{k\to\infty}\frac{1}{n_k}\sum_{j=0}^{n_k-1}(F^{j})_{*}\nu.$$
%Thus $\tilde\mu\in\mathbb{P}_F(M\times N)$. 
Using the semiconjugacy $f\circ\pi=\pi\circ F$ and the definition of $L$ we have
\begin{eqnarray*} 
\pi_{*}\tilde{\mu}=\lim_{k\to\infty}\frac{1}{n_k}\sum_{j=0}^{n_k-1}\pi_{*}(F^{j})_{*}\nu&=&\lim_{k\to\infty}\frac{1}{n_k}\sum_{j=0}^{n_k-1}(f^{j})_{*}\pi_{*}\nu\\
&=&\lim_{k\to\infty}\frac{1}{n_k}\sum_{j=0}^{n_k-1}(f^{j}\circ\pi)_{*}\nu\\
&=&\lim_{k\to\infty}\frac{1}{n_k}\sum_{j=0}^{n_k-1}(f^{j})_{*}{\mu}={\mu}.\end{eqnarray*}
The uniqueness of $\tilde{\mu}$ is a consequence of the contraction on the fibers. In fact, suppose that $\tilde{\mu}_1$ and $\tilde{\mu}_2$ are ergodic measures which satisfy $$\pi_{*}\tilde{\mu}_1=\mu=\pi_{\ast}\tilde{\mu}_2.$$ 
Let $B_{\tilde{\mu}_1}(F)$ and $B_{\tilde{\mu}_2}(F)$ be the ergodic basins of $\tilde{\mu}_1$ and $\tilde{\mu}_2$, respectively. From the uniform contraction on fibers and the ergodicity of $\tilde{\mu}_1,\tilde{\mu}_2$, there are Borel sets $A_1,A_2\subset M$ with $A_{1}\cap A_{2}=\emptyset$ such that
$$B_{\tilde{\mu}_1}(F)=A_{1}\times N\quad\text{and}\quad B_{\tilde{\mu}_2}(F)=A_{2}\times N .$$  
On the other hand, since $\mu$ is ergodic and the sets $\pi(B_{\tilde{\mu}_1}(F))$, $\pi(B_{\tilde{\mu}_2}(F))$ are $f$-invariant, it follows that 
$$\mu\big(\pi(B_{\tilde{\mu}_1}(F))\big)=\mu\big(\pi(B_{\tilde{\mu}_2}(F)\big)\big)=1.$$
Thus we have 
 $$\pi\big(B_{\tilde{\mu}_1}(F)\big)\cap \pi\big(B_{\tilde{\mu}_2}(F)\big)\neq\emptyset$$
and by definition of $\pi$ we obtain that $A_{1}\cap A_{2}\neq\emptyset.$ This gives $\tilde{\mu}_1=\tilde{\mu}_2$.

Now we prove the second part of the lemma. First observe that the uniform contraction of $g$ on the fibers   gives   $h_{\top}(F, \pi^{-1}({x}))=0$ for every $x\in M.$ %This in particular gives $h_{\top}(F)= h_{\top}(f)$. Moreover,  
 Then, by Ledrappier-Walters Formula we obtain
 \begin{equation}\label{eq.tropias}
 h_{\tilde\mu}(F)=h_\mu(f)
 \end{equation}
  for any $\mu\in\mathbb P_f(M)$ and $\tilde\mu\in\mathbb P_F(M\times N)$ such that $\pi_*\tilde\mu=\mu$.
It follows from~\eqref{pressure2} and \eqref{eq.pressoes} that
\begin{eqnarray*}
P_{f}(\varphi)&\leq&P_{F}(\bar{\phi})\\
&=&\ds\sup_{\tilde{\eta}}\left\{h_{\tilde{\eta}}(F)+\int{\bar{\phi}} \,d\tilde{\eta}\right\}\\
&\leq&\!\!\!\!\sup_{\eta;\pi_{\ast}\tilde{\eta}=\eta}\left\{h_{\eta}(f)+\int{\!h_{\top}(F, \pi^{-1}(x))\,d\eta(x)}+\int{\varphi}\, d\eta\right\}\\\\
&\leq&P_{f}(\varphi).
\end{eqnarray*}
Thus we must have 
\begin{eqnarray}\label{pressas}
P_{f}(\varphi)=P_{F}(\bar{\phi}).
\end{eqnarray}
Assume now that   $\mu$ is an ergodic equilibrium state for $(f, \varphi)$. Using~\eqref{pressure2}, \eqref{eq.phis}, \eqref{eq.tropias}, \eqref{pressas}  and the fact that $\pi_*\tilde\mu=\mu$, we obtain 
$$P_{F}(\bar\phi)= P_{f}(\varphi)=  h_{ {\mu}}(f)+\int{\varphi} d \mu= h_{{\tilde\mu}}(F)+\int{\bar\phi}d\tilde\mu.$$
This shows that
 $\tilde{\mu}$    is an equilibrium state for $(F, \bar{\phi})$, and so   an equilibrium state for $(F, {\phi})$, by Proposition~\ref{homologo}.

Finally, assume that $\tilde{\mu}$    is an equilibrium state for $(F, {\phi}).$ Using again~\eqref{pressure2}, \eqref{eq.phis}, \eqref{eq.tropias}, \eqref{pressas}  and the fact that $\pi_*\tilde\mu=\mu$, we obtain 
$$P_{f}(\varphi)=P_{F}(\bar\phi)=  h_{{\tilde\mu}}(F)+\int{\bar\phi}d\tilde\mu= h_{ {\mu}}(f)+\int{\varphi} d \mu ,$$ 
thus finishing the proof of the result.
\end{proof}

\begin{Lema}\label{le.fizinho}
 If $\phi$ is a $c$-hyperbolic potential for $F$, then $\varphi$ is  a $c$-hyperbolic potential for $f$.
 \end{Lema}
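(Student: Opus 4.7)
The strategy is to transfer the hyperbolicity of $\bar\phi$ (granted by Proposition~\ref{homologo}(2)) across the projection $\pi$ to $\varphi$, by identifying the relative pressures of $\bar\phi$ on rectangular sets $A\times N$ with those of $\varphi$ on $A$, for $f$-invariant $A\subset M$. By Proposition~\ref{homologo}(2), $\bar\phi$ is $c$-hyperbolic for $F$, so
$$P_F(\bar\phi,(\Sigma_c(f))^c\times N) < P_F(\bar\phi,\Sigma_c(f)\times N) = P_F(\bar\phi),$$
and by~\eqref{pressas}, $P_F(\bar\phi)=P_f(\varphi)$.

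The crux is the identity
$$P_F(\bar\phi,A\times N) = P_f(\varphi,A) \qquad (\star)$$
for every $f$-invariant Borel set $A\subset M$. To establish this, I would invoke the variational principle for relative pressure (Pesin-Pitskel), which asserts that $P_T(\phi,\Lambda)=\sup\{h_\mu(T)+\int\phi\,d\mu:\mu(\Lambda)=1\}$ for $T$-invariant Borel $\Lambda$. Combined with Lemma~\ref{bijection}, which for $f$-invariant $A$ sets up a bijection between $\{\tilde\mu\in\mathbb P_F(M\times N):\tilde\mu(A\times N)=1\}$ and $\{\mu\in\mathbb P_f(M):\mu(A)=1\}$ via $\mu=\pi_*\tilde\mu$, together with the Ledrappier-Walters identity $h_{\tilde\mu}(F)=h_\mu(f)$ (using the vanishing of fiber topological entropy) and $\int\bar\phi\,d\tilde\mu=\int\varphi\,d\mu$ (by~\eqref{eq.phis}), the two suprema agree, yielding~$(\star)$.

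Applying~$(\star)$ to $A=\Sigma_c(f)$ and $A=(\Sigma_c(f))^c$ and combining with the first paragraph, we obtain
$$P_f(\varphi,(\Sigma_c(f))^c) = P_F(\bar\phi,(\Sigma_c(f))^c\times N) < P_F(\bar\phi) = P_f(\varphi) = P_F(\bar\phi,\Sigma_c(f)\times N) = P_f(\varphi,\Sigma_c(f)),$$
which is precisely the $c$-hyperbolicity of $\varphi$ for $f$.

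The principal subtlety lies in~$(\star)$. If one prefers to avoid the variational principle for relative pressure and argue directly via dynamic balls in the style of Lemma~\ref{conformeunico}, one needs the semiconjugacy inclusion $\pi(B^F_\delta((x,y),n))\subset B^f_\delta(x,n)$ together with the approximate reverse inclusion $B^f_\delta(x,n)\subset \pi(B^F_{\delta'}((x,y),n))$ at the slightly larger scale $\delta'=\delta+\omega_g(\delta)/(1-\lambda)\to 0$, obtained by an induction using that $g$ is $\lambda$-Lipschitz in the fiber and uniformly continuous in the base (with modulus $\omega_g$). Using $\bar\phi=\varphi\circ\pi$, the Birkhoff sums satisfy $S_n^F\bar\phi(x',y')=S_n^f\varphi(x')$, so the above inclusions sandwich $R^F_{n,\delta'}\bar\phi(x,y)$ between $R^f_{n,\delta}\varphi(x)$ and $R^f_{n,\delta'}\varphi(x)$; lifting $f$-covers via a finite $\delta$-net in the compact fiber $N$ and projecting $F$-covers then yield both inequalities in~$(\star)$ after letting $\delta\to 0$. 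The bookkeeping of the coupled scales $\delta,\delta'$ is the main technical point.
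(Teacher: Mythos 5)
Your reduction to the identity $P_F(\bar\phi,A\times N)=P_f(\varphi,A)$ for $A=\Sigma_c(f)$ and $A=(\Sigma_c(f))^c$ is the right one, and your second sketch (via dynamic balls) is essentially the paper's argument. But your primary route through the Pesin--Pitskel variational principle has a real gap: for a non-compact $T$-invariant Borel set $\Lambda$, the Carath\'eodory-type relative pressure used here satisfies only the one-sided inequality $P_T(\phi,\Lambda)\ge h_\mu(T)+\int\phi\,d\mu$ for $\mu(\Lambda)=1$ (equality is known for compact invariant sets and some special level sets, not for arbitrary invariant Borel sets). So showing that the two suprema over measures coincide gives a common lower bound for $P_F(\bar\phi,A\times N)$ and $P_f(\varphi,A)$, but does not yield their equality, and hence does not yield $(\star)$.

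On the dynamic-ball route, the paper is simpler than you anticipate. Because $g(x,y_0)=y_0$ for all $x$, choosing the base point $z=y_0$ in the definition of $\varphi$ pins the fiber coordinate along the whole orbit: $F^j(x,y_0)=(f^j(x),y_0)$ and $F^j(y,y_0)=(f^j(y),y_0)$, so $\pi\big(B^F_\delta((x,y_0),n)\big)=B^f_\delta(x,n)$ exactly, and $R_{n,\delta}\bar\phi(x,y_0)=R_{n,\delta}\varphi(x)$ since $\bar\phi=\varphi\circ\pi$. No two-scale $(\delta,\delta')$ bookkeeping is needed. (Your two-scale estimate would be required if one insisted on an arbitrary base point $z$, but the common fixed point is exactly what makes the paper's argument clean; also note the paper's phrase ``for each $(x,z)$'' is imprecise --- the equality of balls is tied to $z=y_0$.) Lifting an $f$-cover of $A$ to an $F$-cover of $A\times N$ does still require a finite $\delta$-net in the fiber, as you correctly point out.

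Finally, observe that even the full identity $(\star)$ is not needed: projecting $F$-covers of $A\times N$ down by $\pi$ immediately gives the one easy inequality $P_f(\varphi,A)\le P_F(\bar\phi,A\times N)$. Applying it with $A=(\Sigma_c(f))^c$ and using hyperbolicity of $\bar\phi$ together with~\eqref{pressas} gives $P_f(\varphi,(\Sigma_c(f))^c)<P_F(\bar\phi)=P_f(\varphi)$, and then~\eqref{pressure} forces $P_f(\varphi,\Sigma_c(f))=P_f(\varphi)$.
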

\begin{proof} First observe that by Proposition~\ref{homologo} we have that $\bar\phi=\varphi \circ \pi$ is also a $c$-hyperbolic potential for $F$.  As  $F$ contracts on fibers, for each $\delta >0$ and each $(x,z) \in M\times N$ we have
$$  B^f_{\delta}(x,n) = \pi (B^F_\delta((x,z),n) ),$$
where the superscripts indicate the dynamics with respect to which we take the dynamic balls.
%This is because the skew product is given by $F(x,z)=(f(x), g(x,z))$. % where the fiber dynamics is a contraction. 
Recalling the definition of relative topological pressure given in Section~\ref{hyp pot}, for all  $\delta >0$, $n \in \mathbb{N}$ and $(x,z) \in M\times N$ we have
\begin{eqnarray*}
R_{n,\delta}\bar\phi(x,z)&=&   \sup_{(y,w)\in B^F_{\delta}((x,z),n)}S_n \bar\phi(y,w) \\ &=& \sup_{(y,w)\in B^F_{\delta}((x,z),n)}S_n (\varphi\circ\pi)(y,w) \\
&=& \sup_{y\in \pi (B^F_{\delta}((x,z),n))}S_n \varphi (y) \\ & = & \sup_{y \in B^f_{\delta}(x,n)} S_n \varphi(y) \\
&=&R_{n,\delta} \varphi(x) .
\end{eqnarray*}
Hence,  for each $N\in \mathbb{N}$ and $\gamma>0$ we have
$$  m_f(\varphi, \left(\Sigma_c(f)\right)^c, \delta, N, j) =    m_F(\bar\phi, \left(\Sigma_c(f)\right)^c \times N, \delta, N, j)$$ 
and
$$  m_f(\varphi, \left(\Sigma_c(f)\right), \delta, N, j) =    m_F(\bar\phi, \left(\Sigma_c(f)\right) \times N, \delta, N, j).$$ 
This yields  $$P_{f}(\varphi, \left(\Sigma_c(f)\right)^c)= P_{F}(\bar{\phi}, \left(\Sigma_c(f)\right)^{c}\times N)$$
and  $$P_{f}(\varphi, \Sigma_c(f))=P_{F}(\bar{\phi}, \Sigma_c(f)\times N).$$
Therefore  
$$P_{f}(\varphi, \left(\Sigma_c(f)\right)^c)
= P_{F}(\bar{\phi}, \left(\Sigma_c(f)\right)^{c}\times N) < P_F(\bar{\phi}, \Sigma_c(f)\times N) = P_f(\varphi, \Sigma_c(f)) ,$$
and    using \eqref{pressas}, we obtain
$$ P_{f}(\varphi, \left(\Sigma_c(f)\right)^c)<P_f(\varphi, \Sigma_c(f))=P_F(\bar{\phi}, \Sigma_c(f)\times N)=P_{F}(\bar{\phi})=P_{f}(\varphi).$$
This shows that $\varphi$ is a $c$-hyperbolic potential. 
\end{proof}
The existence and finiteness of equilibrium states for the base dynamics~$f$  was obtained in \cite{RV16}. Combining this fact with Lemma~\ref{bijection} we conclude that there exist finitely many ergodic equilibrium states for $(F, \bar{\phi}).$ Since $\bar{\phi}$ is homologous to the initial potential $\phi$ we have completed the proof of Theorem~\ref{unicidade}.

\smallskip

Now we are able to prove Theorem~\ref{statistical}. Given $(F_0,\phi_0)\in\mathcal G_c$, consider a sequence $(F_n,\phi_n)_n$  in $\mathcal G_c$  converging to $(F_0, \phi_0)$ in the product topology. Let  $\tilde{\mu}_n$ be an equilibrium state of  $(F_n, \phi_n)$. 
%Using the equilibrium stability for the base dynamics given by Theorem~\ref{statistical para f}, we 
We are going to  show that every accumulation point $\tilde{\mu}$ of the sequence $(\tilde{\mu}_n)_n$
is an equilibrium state for  $(F_0,\phi_0).$ 

For each $n\ge0$ let $\bar\phi_n$ be the potential associated  to $\phi_n$ by Proposition~\ref{homologo}, and let $\varphi_n$ be the potential on $M$ induced by $\bar\phi_n$. 
%Notice that $\varphi_n= \bar\phi_n\circ\pi$.
 It follows from %Proposition~\ref{homologo} and 
 Lemma~\ref{le.fizinho} that $\varphi_n$ is $c$-hyperbolic (with respect to $f_n$), and this means that $(f_n, \varphi_n) \in \mathcal{H}_c$  for all $n\ge0$. Moreover, using~\eqref{eq.barraphi} and~\eqref{eq.fizinho}, we have that
   the convergence of $(F_n, \phi_n)$ to $(F_0, \phi_0)$ in the product topology implies the convergence of $(f_n, \varphi_n)$ to $(f_0, \varphi_0) $.

%First by Proposition~\ref{homologo} for each $c$-hyperbolic potential $\phi:M\times N\to\mathbb{R}$ there exists a homologous potential  not depending on the stable direction and it is also $c$-hyperbolic. Moreover, the homologous potential is given by $\phi(\cdot, y_0)$.
%Also each element $(F, \phi) \!\in\! \mathcal G_c$ induces an element $(f, \varphi)\! \in \! \mathcal H_c$ which satisfies $f\circ\pi=\pi\circ F$ and $\varphi(x)=\phi(x, y_0)$ for some $y_0$ in $N$. 
%Therefore the $C^1$-convergence of $(F_n, \phi_n)$ to $(F_0, \phi_0) \in \mathcal G_c$ implies  the $C^1$-convergence of $(f_n, \varphi_n)$ to $(f_0, \varphi_0) \in \mathcal{H}_c$ where $(f_n, \varphi_n)$ are  induced  by $(F_n, \phi_n)$ for all $n\geq 0$.

For each $n \!\in\! \mathbb{N}$ consider ${\mu}_n =\pi_{\ast}\tilde{\mu}_n$. From Lemma~\ref{bijection} we have that $\mu_n$ is an equilibrium state  for $(f_n, \varphi_n)$. Since the projection $\pi$ is continuous, if $\tilde{\mu}$ is an accumulation point of the sequence $(\tilde{\mu}_n)_n$, then $\mu=\pi_{\ast}\tilde{\mu}$ is an accumulation point of $(\mu_n)_n$. By the equilibrium stability   given by Theorem~\ref{statistical para f}, we have that $\mu$ is an equilibrium state for $(f_0, \varphi_0)$. Hence, applying Lemma~\ref{bijection} again we have that $\tilde{\mu}$ is an equilibrium state for  $(F_0,\phi_0)$. This concludes the proof of Theorem~\ref{statistical}.

We finish this section deducing the continuity of the topological pressure on the set $\mathcal{G}_c$. 

\begin{Cor} \label{continuidade da pressao}
The   function $$
\begin{array}{cccc}
\mathcal G_c  \ & \longrightarrow  \ & \mathbb{R} \\
    (F,\phi) \ & \longmapsto  \ & P_F(\phi)
\end{array}
$$
is continuous.
\end{Cor}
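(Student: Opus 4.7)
The strategy is to reduce the continuity of the topological pressure on $\mathcal{G}_c$ to the already established continuity on $\mathcal{H}_c$ given by Corollary~\ref{continuidade da pressao na base}, by transporting convergent sequences in $\mathcal{G}_c$ to convergent sequences in $\mathcal{H}_c$ via the projection construction developed in Section~\ref{est eq}.

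First, given a sequence $(F_n,\phi_n)$ in $\mathcal{G}_c$ converging to $(F_0,\phi_0)\in\mathcal{G}_c$, I would apply Proposition~\ref{homologo} to produce for each $n\ge 0$ the homologous potential $\bar\phi_n$, which by the explicit formula~\eqref{eq.barraphi} satisfies $\bar\phi_n(x,y)=\phi_n(x,y_0)$. Setting $\varphi_n(x)=\phi_n(x,y_0)$ as in~\eqref{eq.fizinho}, the convergence $\phi_n\to\phi_0$ in $C^\alpha(M\times N)$ directly gives $\varphi_n\to\varphi_0$ in $C^\alpha(M)$; combined with $f_n=b_{F_n}\to b_{F_0}=f_0$ in $\mathcal{F}$, this yields $(f_n,\varphi_n)\to(f_0,\varphi_0)$ in $\mathcal{F}\times C^\alpha(M)$.

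Second, I would verify that each $(f_n,\varphi_n)$ lies in $\mathcal{H}_c$. The fact that $F_n\in\mathcal{G}_c$ means $b_{F_n}=f_n$ satisfies property~\eqref{propestrela}, and Lemma~\ref{le.fizinho} guarantees that $\varphi_n$ is a $c$-hyperbolic potential for $f_n$. Hence Corollary~\ref{continuidade da pressao na base} applies and gives $P_{f_n}(\varphi_n)\to P_{f_0}(\varphi_0)$.

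Finally, I would assemble the chain of equalities $P_{F_n}(\phi_n)=P_{F_n}(\bar\phi_n)=P_{f_n}(\varphi_n)$, valid for every $n\ge 0$ by item~(3) of Proposition~\ref{homologo} together with identity~\eqref{pressas}. The desired continuity $P_{F_n}(\phi_n)\to P_{F_0}(\phi_0)$ is then immediate. There is no real obstacle here beyond bookkeeping: all substantive work was already done in Proposition~\ref{homologo}, Lemma~\ref{bijection}, Lemma~\ref{le.fizinho} and Corollary~\ref{continuidade da pressao na base}, and the only point deserving a brief mention is that the transparent formula $\bar\phi_n(x,y)=\phi_n(x,y_0)$ makes the transfer of the $C^\alpha$-convergence from the skew product to the base entirely automatic.
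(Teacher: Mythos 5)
Your proposal is correct and follows essentially the same route as the paper's own proof: reduce to the base via the induced $(f_n,\varphi_n)\in\mathcal{H}_c$, use the explicit formula $\bar\phi(x,y)=\phi(x,y_0)$ to transport $C^\alpha$-convergence, invoke $P_{F}(\phi)=P_F(\bar\phi)=P_f(\varphi)$ from Proposition~\ref{homologo} and~\eqref{pressas}, and conclude by Corollary~\ref{continuidade da pressao na base}. Your write-up is in fact a bit more detailed than the paper's, explicitly checking that each $(f_n,\varphi_n)$ lands in $\mathcal{H}_c$ via Lemma~\ref{le.fizinho}, which the paper leaves implicit.
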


\begin{proof} As before, given $(F, \phi)\in \mathcal G_c$ consider the induced $(f, \varphi)\in \mathcal{H}_c$. Using~\eqref{eq.barraphi} and~\eqref{eq.fizinho} it follows  that if $(F,\phi) $ varies continuously in $\mathcal G_c$, then   $(f, \varphi)$ also varies continuously in $ \mathcal{H}_c$. Considering $\bar\phi$ given by Proposition~\ref{homologo}, we have $P_{f}(\phi)=P_{F}(\bar{\phi})$. It follows from~\eqref{pressas} that $P_{f}(\varphi)=P_{F}(\phi)$.
%
%
%we have that $P_{f}(\varphi)=P_{F}(\bar{\phi})$ where $\bar\phi$ is homologous to $\phi$ and so by Proposition~\ref{homologo} we conclude $P_{f}(\varphi)=P_{F}(\phi)$.
Hence, as by Corollary~\ref{continuidade da pressao na base} we have   $P_{f}(\varphi)$ varying continuously within $\mathcal{H}_c$, then $P_{F}(\phi)$ varies continuously within $\mathcal G_c$ as well.
\end{proof}

%%%%%%%%%%%%%%%%%%%%%%%%%%%%%%%%%%%%%%%%%%%%%%%%%%%%%%%%%%%%%%%%%%%%%%%%%%%%%%%%%%%%%%%%%%%%%%%%%%%%%%%
%%%%%%%%%%%%%%%%%%%%%%%%%%%%%%%%%%%%%%%%%%%%%%%%%% Applications  %%%%%%%%%%%%%%%%%%%%%%%%%%%%%%%%%%%%%%%
%%%%%%%%%%%%%%%%%%%%%%%%%%%%%%%%%%%%%%%%%%%%%%%%%%%%%%%%%%%%%%%%%%%%%%%%%%%%%%%%%%%%%%%%%%%%%%%%%%%%%%%
%%%%%%%%%%%%%%%%%%%%%%%%%%%%%%%%%%%%%%%%%%%%%%%%%%%%%%%%%%%%%%%%%%%%%%%%%%%%%%%%%%%%%%%%%%%%%%%%%%%%%%%

\section{Applications}\label{applications}

In this section we present some classes of systems which satisfy our results. We begin describing a robust class of non-uniformly expanding maps studied by Alves, Bonatti and Viana \cite{ABV}, Arbieto, Matheus and Oliveira \cite{AMO}, Oliveira and Viana \cite{OV08}, Varandas and Viana \cite{VV}. 

\begin{Ex} \normalfont{Let $M$ be a compact manifold and let $f:M\rightarrow M$ be a $C^{1}$ local diffeomorphism. Fixing   $\delta>0$ small and $\sigma<1$, consider  a covering $\mathcal P=\left\{P_{1},..., P_{q}, P_{q+1},..., P_{s}\right\}$ of $M$ by domains of injectivity for $f$ and a  region $\textsl{A}\subset M$ satisfying:
\begin{enumerate}
\item[(H1)]   $\|Df^{-1}(x)\|\leq 1+\delta$, for every $x\in\textsl{A}$;
\item[(H2)] $\|Df^{-1}(x)\|\leq \sigma $, for every $x\in M\setminus\textsl{A}$;
\item[(H3)]   $A$   can be covered by $q$ elements of the partition $\mathcal P$ with $q<\deg(f).$
\end{enumerate}
The authors aforementioned showed that there exists a  constant $c>0$ and a set $H\subset M$ such that for every $x\in H$ we have
\begin{eqnarray*} 
\limsup_{n\rightarrow+\infty}\frac{1}{n}\sum_{i=0}^{n-1}\log\|Df(f^{j}(x))^{-1}\|\leq-c .
\end{eqnarray*}
Moreover, if $\phi:M\rightarrow\mathbb{R}$ is a H\"older continuous potential with \emph{small variation}, i.e. $$\sup\phi - \inf\phi < \log\deg(f)-\log q,$$ then the relative pressure $P(\phi, H)$ satisfies 
$$P_{f}(\phi, H^{c})<P_{f}(\phi, H)=P_{f}(\phi).$$
and thus $\phi$ is $c$-hyperbolic for $f$. 

Let $\mathcal{F}$ be the class of $C^1$ local diffeomorphisms satisfying the conditions (H1)-(H3) and assume that for every $f\in \mathcal{F}$ and $x\in M$  the set $\{f^{-n}(x)\}_{n\geq 0}$ is dense in $M$. Consider the family
 $$\mathcal H=\{(f,\phi): f\in \mathcal{F} \text{ and $\phi:M\to\mathbb R$ H\"older continuous with small variation}  \}.$$
Since the constant $c>0$ is uniform in the class $\mathcal F$ we can apply Theorem~\ref{statistical para f} to conclude that the family $\mathcal H$ is equilibrium stable.}
\end{Ex}

%Besides this class described above, we can also apply Theorem~\ref{statistical para f} on the following example.

An interesting and more specific case of application of Theorem~\ref{statistical para f} is the family of intermittent maps described in the next example. This can be seen as a particular case of the previous example. 

\begin{Ex}\normalfont{Fix some positive constant $\alpha\in(0,1)$ and define on $S^{1}$ the local difeomorphism 
$$
f_{\alpha}(x)=\left\{
\begin{array}{cc}
x(1+2^{\alpha}x^{\alpha}),&\; \textit{\normalfont{if}}\;\; 0\leq x\leq \frac{1}{2}\\\\
x-2^{\alpha}(1-x)^{1+\alpha},&\; \textit{\normalfont{if}}\;\; \frac{1}{2}\leq x\leq 1.
\end{array}
\right.
$$
%Then $f_{\alpha}$ is a non-uniformly expanding map. Furthermore, s
Since $f_{\alpha}$ is strongly mixing (for each open set $U$ there exists $N\in\mathbb{N}$ such that $f_{\alpha}^{N}(U) = M$), then for every $f_\alpha$ and $x\in S^1$  the set $\{f_\alpha^{-n}(x)\}_{n\geq 0}$ is dense in $S^1$. Let $\mathcal{F}$ be the class of $C^1$ local diffeomorphisms $\{f_{\alpha}\}_{\alpha\in(0,1)}$ and consider the family
 $$\mathcal H=\{(f,\phi): f\in \mathcal{F} \text{ and $\phi:S^1\to\mathbb R$ H\"older continuous with small variation}   \}.$$
Hence, the family $\mathcal H$ satisfies the hypotheses of Theorem~\ref{statistical para f} and therefore $\mathcal H$ is equilibrium stable.}
\end{Ex}

%%%%%%%%%%%%%%%%%%%%%%%%%%%%%%%
%%%%%%%%%%%%%%%%%%%%%%%%%%%%%%%%%%%%%
%%%%%%%%%% FERRADURA   %%%%%%%%%%%%%%
%%%%%%%%%%%%%%%%%%%%%%%%%%%%%%%
%%%%%%%%%%%%%%%%%%%%%%%%%%%%%%%%%%%%%

Now we present a class of maps and potentials for which the techniques used to prove Theorem~\ref{unicidade} and Theorem~\ref{statistical} can be applied. This is a family of partially hyperbolic horseshoes whose dynamics is given by a step skew product over a horseshoe. This class of maps was defined in \cite{diazetal} and has been studied in the works \cite {LOR},  \cite{RS15} and \cite{RS16}. %In particular in \cite{RS15} it was shown that this family can be modeled by a skew-product which base dynamics is strongly topologically mixing and non-uniformly expanding. 
Here %Rather than a direct application of these results 
we obtain the equilibrium stability of this family. % as an application of the proofs that we presented in Section~\ref{est eq}.

\begin{Ex} \label{ferradura} \normalfont{Consider the cube $R= [0,1]\times[0,1]\times[0,1]\subset\mathbb{R}^3$  and   the paralellepipeds
$$R_0 =[0,1]\times [0,1]\times [0,1/6]\quad \mbox{and} \quad R _1=[0,1]\times [0,1]\times [5/6,1].$$ 
Consider a map   defined for $(x,y,z)\in R_0$ as  
   $$ F_{0}(x,y,z) =(\rho x , f(y),\beta z),$$
where $0 < \rho <{1/3}$, $\beta> 6$ and  $$f(y) =\frac {1}{1 - \left(1-\frac{1}{y}\right)e^{-1}}.$$
Consider also a map  defined for $(x,y,z)\in R_1$ as
$$F_{1}(x,y,z)  = \left(\frac{3}{4}- \rho x , \sigma (1 - y) ,\beta_{1} \left(z - \frac{5}{6} \right)\right),$$
where  $0<\sigma< {1/3}$ and $3< \beta_1 < 4$.
We define the horseshoe  map $F$ on $R$ as
 $$F\vert_{R_0}=F_0,\quad F\vert_{R_1}=F_1,
 $$
%\begin{equation*}
%F(X) = \left\{
%\begin{array}{lcl}
%F_0(X), & \mbox{if} & X \in  R_0;\\
%F_1(X), & \mbox{if} & X \in  R_1.
%\end{array}
%\right.
%\end{equation*} 
with  $  R\setminus(R_0\cup R_1)$ being mapped injectively outside $R$.

In \cite{diazetal} it was proved that the non-wandering set of $F$ is partially hyperbolic when we consider fixed parameters satisfying conditions above.  
%In \cite{RS15} Rios and Siqueira showed that if $\phi$ does not depend on the stable direction, then it induces a potential whose pressure is located on a non-uniformly expanding set, i.e., $\phi$ is a hyperbolic potential. In \cite{RS16} the independence of the stable direction was removed. The authors proved that given a H\"older continuous potential it is possible to construct a potential homologous to it that does not depend on the stable direction. Moreover, the homologous potential also satisfies condition \eqref{potencial ferradura}. Thus it is hyperbolic.
In \cite{RS15} it was considered the map $F^{-1}$ and proved that it can be written as a skew product whose base dynamics, denoted by $G$, is strongly topologically mixing and non-uniformly expanding. Moreover the fiber dynamics is a uniform contraction. Consider $\Omega$ the maximal invariant set for $F^{-1}$  on the cube $R$.

We say that a H\"older continuous potential  $\phi:R_0\cup R_1\to \mathbb{R}$ has \emph{small variation} if
\begin{equation}\label{potencial ferradura}
\sup\phi - \inf\phi < \frac{\log\omega}{2}, \quad \mbox{where}\quad \displaystyle\omega=\frac{1+\sqrt{5}}{2}.
\end{equation}
Let $\mathcal{S}$ be the family of horseshoes described above and note that it depends on the parameters $\rho, \beta, \beta_1$ and $\sigma$. Considering the family
$$
\tilde{\mathcal{G}}=\left\{(F^{-1}, \phi) :  F\in \mathcal{S}\text{ and $ \phi:\Omega \to\mathbb R$ H\"older continuous satisfying \eqref{potencial ferradura}} \right\} .
$$
We are going to verify that $\tilde{\mathcal{G}}$ is equilibrium stable. Let $(F^{-1}_n,\phi_n)_n$ be a sequence in $\tilde{\mathcal{G}}$ converging to $(F^{-1}_0, \phi_0)$ in the  product topology. By Lemma~\ref{le.fizinho}, for each $n\geq 0$ the potential $\phi_n$ induces a potential $\varphi_n$ which is hyperbolic with respect to the base dynamics $G_n.$ Moreover the convergence of $(F^{-1}_n, \phi_n)$ to $(F^{-1}_0, \phi_0) $ implies  the convergence of $(G_n, \varphi_n)$ to $(G_0, \varphi_0)$.

Consider $\tilde{\mu}_n$ the equilibrium state for  $(F^{-1}_n, \phi_n)$. From Lemma~\ref{bijection} we know that the push-foward ${\mu}_n =\pi_{\ast}\tilde{\mu}_n$ is an equilibrium state for $(G_n, \varphi_n)$ for every $n\ge 1$. Hence if $\tilde{\mu}$ is an accumulation point of the sequence $(\tilde{\mu}_n)_n$ then $\mu=\pi_{\ast}\tilde{\mu}$ is an accumulation point of $(\mu_n)_n$. Since the base dynamics $G_n$ is strongly mixing and non-uniformly expanding and the potential $\varphi_n$ is hyperbolic, it follows from Theorem~\ref{statistical para f} that $\mu$ is an equilibrium state for $(G_0, \varphi_0)$. Hence, applying Lemma~\ref{bijection} again we have that $\tilde{\mu}$ is an equilibrium state for  $(F^{-1}_0,\phi_0)$. 

Now define the family
$$\mathcal{G}=\left\{(F, \phi) :   F\in \mathcal{S}\text{ and $ \phi:\Omega \to\mathbb R$ H\"older continuous satisfying \eqref{potencial ferradura}} \right\}. $$
Since each $F \in \mathcal{S}$ is a diffeomorphism, the set of equilibrium states for $(F, \phi) \in \mathcal{G} $  coincides with the one for $(F^{-1}, \phi) \in\tilde{\mathcal{G}} $. Thus $\mathcal{G}$ is equilibrium stable as well. }
\end{Ex}

%%%%%%%%%%%%%%%%%%%%%%%%%%%%%%%%%%%%%%%%%%%%%%%%%%%%%%%%%%%%%%%%%%%%%%%%%%%%%%%%%%%%%%%%%%%%%%%%%%%%%%%
%%%%%%%%%%%%%%%%%%%%%%%%%%%%%%%%%%%%%%%%%%%%%%%%%% Bibliografia %%%%%%%%%%%%%%%%%%%%%%%%%%%%%%%%%%%%%%%
%%%%%%%%%%%%%%%%%%%%%%%%%%%%%%%%%%%%%%%%%%%%%%%%%%%%%%%%%%%%%%%%%%%%%%%%%%%%%%%%%%%%%%%%%%%%%%%%%%%%%%%
%%%%%%%%%%%%%%%%%%%%%%%%%%%%%%%%%%%%%%%%%%%%%%%%%%%%%%%%%%%%%%%%%%%%%%%%%%%%%%%%%%%%%%%%%%%%%%%%%%%%%%%

\end{document}